\newcommand{\bbE}{{\ensuremath{\mathbb E}} }
\newcommand{\bbF}{{\ensuremath{\mathbb F}} }
\newcommand{\bbP}{{\ensuremath{\mathbb P}} }
\newcommand{\bbY}{{\ensuremath{\mathbb Y}} }
\newcommand{\cA}{{\ensuremath{\mathcal A}} }
\newcommand{\cF}{{\ensuremath{\mathcal F}} }
\newcommand{\cM}{{\ensuremath{\mathcal M}} }
\newcommand{\cN}{{\ensuremath{\mathcal N}} }
\newcommand{\cP}{{\ensuremath{\mathcal P}} }
\newcommand{\cY}{{\ensuremath{\mathcal Y}} }
\newcommand{\dd}{{\ensuremath{\mathrm d}} }
\newcommand{\de}{{\ensuremath{\mathrm e}} }
\newcommand{\dC}{{\ensuremath{\mathrm C}} }
\newcommand{\dD}{{\ensuremath{\mathrm D}} }
\newcommand{\dL}{{\ensuremath{\mathrm L}} }
\newcommand{\R}{\mathbb{R}}
\newcommand{\N}{\mathbb{N}}
\newcommand{\ind}{\ensuremath{\mathbf{1}}}
\newcommand{\one}{\ensuremath{\mathsf{1}}}
\DeclarePairedDelimiter{\abs}{\lvert}{\rvert}
\DeclarePairedDelimiter{\norm}{\lVert}{\rVert}
\DeclarePairedDelimiterX{\inprod}[2]{\langle}{\rangle}{#1, #2}
\DeclareMathOperator{\tr}{tr}
\renewcommand{\epsilon}{\varepsilon}
\newcommand{\changetheta}
{
\let\temp\theta
\let\theta\vartheta
\let\vartheta\temp
}
\newcommand{\changephi}
{
\let\temp\phi
\let\phi\varphi
\let\varphi\temp
}
\theoremstyle{plain}
\newtheorem{theorem}{Theorem}[section]
\newtheorem*{theorem*}{Theorem}
\newtheorem{lemma}[theorem]{Lemma}
\newtheorem*{lemma*}{Lemma}
\newtheorem{proposition}[theorem]{Proposition}
\newtheorem*{proposition*}{Proposition}
\theoremstyle{definition}
\newtheorem{assumption}{Assumption}[section]
\theoremstyle{remark}
\newtheorem{remark}{Remark}[section]
\newtheorem*{remark*}{Remark}
\definecolor{darkviolet}{rgb}{0.58, 0.0, 0.83}
\definecolor{gre}{rgb}{0.03,0.50,0.03}
\numberwithin{equation}{section}
\newcommand{\mail}[1]{\href{mailto:#1}{\normalfont\texttt{#1}}}
\def\@setthanks{\vspace{-\baselineskip}\def\thanks##1{\@par##1\@addpunct.}\thankses}
\title[Filtering of Singularly Controlled Systems]{Nonlinear Filtering of Partially Observed Systems arising in Singular Stochastic Optimal Control}
\author[A.~Calvia]{Alessandro~Calvia}
\author[G.~Ferrari]{Giorgio~Ferrari}
\date{}
\thanks{\noindent A.~Calvia
\\
LUISS University, Department of Economics and Finance, Viale Romania 32, 00197 Rome (Italy).
\\
E-mail: \mail{acalvia@luiss.it}.
\medskip
\\
G.~Ferrari
\\
Bielefeld University, Center for Mathematical Economics (IMW), Universit\"atstrasse 25, 33615, Bielefeld (Germany).
\\
E-mail: \mail{giorgio.ferrari@uni-bielefeld.de}.
\medskip
\\
This research was supported by the 2019 INdAM-GNAMPA project \foreignlanguage{italian}{\textit{Problemi di controllo ottimo stocastico con osservazione parziale in dimensione infinita}}, of which the first author was Principal Investigator. Financial support by the German Research Foundation (DFG) through the Collaborative Research Centre 1283 is also gratefully acknowledged by the second author.
\medskip
}
\begin{document}

\changephi
\changetheta
\allowdisplaybreaks
	
\begin{abstract}
This paper deals with a nonlinear filtering problem in which a multi-dimensional signal process is additively affected by a process $\nu$ whose components have paths of bounded variation. The presence of the process $\nu$ prevents from directly applying classical results and novel estimates need to be derived. By making use of the so-called reference probability measure approach, we derive the Zakai equation satisfied by the unnormalized filtering process, and then we deduce the corresponding Kushner-Stratonovich equation. Under the condition that the jump times of the process $\nu$ do not accumulate over the considered time horizon, we show that the unnormalized filtering process is the unique solution to the Zakai equation, in the class of \mbox{measure-valued} processes having a square-integrable density. Our analysis paves the way to the study of stochastic control problems where a decision maker can exert singular controls in order to adjust the dynamics of an unobservable It\^o-process.
\end{abstract}
\maketitle
	
\noindent \textbf{Keywords:} Stochastic filtering; singularly controlled systems; reference probability measure; Zakai equation; Kushner-Stratonovich equation.

\smallskip

\noindent \textbf{AMS 2020:} 93E11, 60G35, 60H15, 60J25, 60J76.

\smallskip

\bigskip

\section{Introduction} 
\label{sec:intro}

This paper studies a stochastic filtering problem on a finite time horizon $[0,T]$, $T > 0$, in which the dynamics of a multi-dimensional process $X = (X_t)_{t \in [0,T]}$, called \emph{signal} or \emph{unobserved process}, are additively affected by a process having components of bounded variation.
The aim is to estimate the hidden state $X_t$, at each time $t \in [0,T]$, using the information provided by a further stochastic process $Y = (Y_t)_{t \in [0,T]}$, called \emph{observed process}; said otherwise, we look for the conditional distribution of $X_t$ given the available observation up to time $t$. This leads to derive an evolution equation for the filtering process, which is a probability measure-valued process satisfying, for any given bounded and measurable function $\phi \colon \R^m \to \R$,
\begin{equation*}
\pi_t(\phi) \coloneqq \int_{\R^m} \phi(x) \, \pi_t(\dd x) = \bbE\bigl[\phi(X_t) \bigm| \cY_t \bigr], \quad t \in [0,T],
\end{equation*}
where $(\cY_t)_{t \in [0,T]}$ is the natural filtration generated by $Y$ and augmented by $\bbP$-null sets.
The process $\pi$ provides the best estimate (in the usual $\dL^2$ sense) of the signal process $X$, given the available information obtained through the process $Y$.

Stochastic filtering is nowadays a well-established research topic. The literature on the subject is vast and many different applications have been studied: the reader may find a fairly detailed historical account in the book by \citet{bain:fundofstochfilt}. Classic references are the books by \citet{bensoussan:stochcontrol, kallianpur1980:stochfilt, liptsershiryaev2001:statistics} (cf. also \citet[Chapter~4]{bremaud:pp} for stochastic filtering with point process observation); more recent monographs are, e.g., the aforementioned book by \citet{bain:fundofstochfilt, crisanrozovski2011}, and \citet{xiong:intrtostochfiltth} (see also \citet[Chapter~22]{cohen:stochcalculus}).
Recently, different cases where the signal and/or the observation processes can have discontinuous trajectories (as in the present work) have been studied and explicit filtering equations have been derived: see, for instance, \citet{bandini2021:filt, calvia:filtcontrol, ceci2000filtering, ceci2001nonlinear, cecicolaneri:ks, cecicolaneri:zakai, confortola:filt, grigelionis2011}.

The main motivation of our analysis stems from the study of singular stochastic control problems under partial observation. 
Consider a continuous-time stochastic system whose position or level $X_t$ at time $t \in [0,T]$ is subject to random disturbances and can be adjusted instantaneously through (cumulative) actions that, as functions of time, do not have to be absolutely continuous with respect to Lebesgue measure. In particular, they may present a Cantor-like component and/or a jump component. The use of such singular control policies is nowadays common in applications in Economics, Finance, Operations Research, as well as in Mathematical Biology. Typical examples are, amongst others, (ir)reversible investment choices (e.g., \citet{RiedelSu}), dividends' payout (e.g., \citet{Reppen-etal}), inventory management problems (e.g., \citet{HarrisonTaksar}), as well as harvesting issues (e.g., \citet{AlvarezShepp}).
Suppose also that the decision maker acting on the system is not able to observe the dynamics of the controlled process $X$, but she/he can only follow the evolution of a noisy process $Y$, whose drift is a function of the signal process.
Mathematically, we assume that the pair $(X,Y)$ is defined on a filtered complete probability space $(\Omega, \cF, \bbF \coloneqq (\cF_t)_{t \in [0,T]}, \bbP)$ and that its dynamics are given, for any $t \in [0,T]$, by the following system of SDEs:
\begin{equation}\label{eq:model-intro}
\left\{
\begin{aligned}
\dd X_t &= b(t, X_t) \, \dd t + \sigma(t, X_t) \, \dd W_t + \dd \nu_t, & &X_{0^-} \sim \xi \in \mathcal{P}(\R^m), \\
\dd Y_t &= h(t, X_t) \, \dd t + \gamma(t) \, \dd B_t, & &Y_0 = y \in \R^n.
\end{aligned}
\right.
\end{equation}
Here: $\xi$ is a given probability distribution on $\R^m$; $W$ and $B$ are two independent $\bbF$-standard Brownian motions; coefficients $b, \sigma, h, \gamma$ are suitable measurable functions; $\nu$ is a c\`adl\`ag, $\R^m$-valued process with (components of) bounded variation, that is adapted to the previously introduced observation filtration $(\cY_t)_{t \in [0,T]}$.

Clearly, the decision maker might want to adjust the dynamics of $X$ in order to optimize a given performance criterion. Since $X$ is unobservable, this leads to a stochastic optimal control problem under partial observation, which can be tackled by deriving and studying the so-called \emph{separated problem}, an equivalent problem under full information (see, e.g., \citet{bensoussan:stochcontrol}), where the signal $X$ is formally replaced by its estimate provided by the filtering process $\pi$. However, to effectively solve the original optimization problem by means of the separated one, a first necessary step concerns the detailed study of the associated filtering problem. 

%\red{As previously anticipated,} this is based on the derivation of an evolution equation , and would play the role of a state-variable in the separated problem.
%
%According to the previous discussion, it is therefore clear that the analysis of any singular stochastic control problem involving the partially observed system \eqref{eq:model-intro} has first to deal with the study of the corresponding filtering problem. 

To the best of our knowledge, the derivation of explicit filtering equations in the setting described above has not yet received attention in the literature. In this paper we provide a first contribution in this direction. Indeed, the recent literature treating singular stochastic control problems under partial observation assumes that the observed process, rather than the signal one, is additively controlled (cf.\ \citet{Callegaro-etal}, \citet{DeAngelis}, \citet{DecampsVilleneuve}, and \citet{Federico-etal}). Clearly, such a modeling feature leads to a filtering analysis that is completely different from ours.

By making use of the so-called reference probability measure approach, we derive the \emph{Zakai stochastic partial differential equation} (SPDE) satisfied by the so-called \emph{unnormalized filtering process}, which is a measure-valued process, associated with the filtering process via a suitable change of probability measure. Then, we deduce the corresponding evolution equation for $\pi$, namely, the so-called \emph{Kushner-Stratonovich equation} or \emph{Fujisaki-Kallianpur-Kunita equation}. Furthermore, we show that the unnormalized filtering process is the unique solution to the Zakai equation, in the class of measure-valued processes having a square-integrable density. The latter result is proved under the technical requirement that the jump times of the  process $\nu$ affecting $X$ in \eqref{eq:model-intro} do not accumulate over the considered time-horizon. Although such a condition clearly poses a restriction on the generality of the model, we also acknowledge that it is typically satisfied by optimal control processes arising in singular stochastic control problems. 
It is important to notice that establishing conditions under which the unnormalized filtering process possesses a density paves the way to recast the separated problem as a stochastic control problem in a Hilbert space, as we will briefly explain in the next section.

The rest of the introduction is now devoted to a discussion of our approach and results at a more technical level.

\subsection{Methodology and main results}
\label{sec:results}

In this paper we are going to study the filtering problem described above through the so-called \emph{reference probability approach}, that we briefly summarize here. To start, let us notice that the model introduced in \eqref{eq:model-intro} is somewhat ill-posed. In fact, the dynamics of the signal process $X$ depend on the $(\cY_t)_{t \in [0,T]}$-adapted process $\nu$ while, simultaneously, the dynamics of the observed process $Y$ depend on $X$. Otherwise said, it is not clear how to define $\nu$, which has to be given \emph{a priori}, and \emph{circularity} arises if one attempts to introduce the partially observed system $(X,Y)$ as in \eqref{eq:model-intro}.

A possible way out of this impasse is to define $Y$ as a given Gaussian process independent of $X$ (see \eqref{eq:Ydef}). In this way, it makes sense to fix a $(\cY_t)_{t \in [0,T]}$-adapted process $\nu$ and to define the dynamics of the signal process $X$ as in the first SDE of \eqref{eq:model-intro} (see also \eqref{eq:XSDE}). Finally, under suitable assumptions, there exists a probability measure change (cf. \eqref{eq:girsanov}) that allows us to recover the dynamics of $Y$ as in the second SDE of \eqref{eq:model-intro} (see also \eqref{eq:YSDEctrl}). It is important to notice that the resulting probability depends on the initial law $\xi$ of $X_{0^-}$ and on $\nu$. 
To derive the associated Kushner-Stratonovich equation there are two main approaches in the literature: The \emph{Innovations approach} and the aforementioned reference probability approach. Although it might be possible to derive the filtering dynamics in our context by using the former approach, we follow the latter method. 

Our first main results is Theorem~\ref{th:Zakai}, where we deduce the Zakai equation verified by the unnormalized filtering process (see \eqref{eq:rho} for its definition). From this result, as a byproduct, we deduce in Theorem~\ref{th:KS} the Kushner-Stratonovich equation satisfied by the filtering process. It is worth noticing that, given the presence of the bounded-variation process $\nu$ in the dynamics of $X$, Theorem~\ref{th:Zakai} cannot be obtained by invoking classical results, but novel estimates need to be derived (cf.\ Lemma~\ref{lem:gronwall} and Proposition~\ref{prop:etamart}). In particular, we employ a change of variable formula for Lebesgue-Stieltjes integrals.

%Our first main results is, thus, Theorem~\ref{th:Zakai}, where we deduce the evolution equation verified by the unnormalized filtering process. From this result, as a byproduct, we deduce in Theorem~\ref{th:KS} the Kushner-Stratonovich equation satisfied by the filtering process. It is worth noticing that, given the presence of the bounded-variation process $\nu$ in the dynamics of $X$, Theorem~\ref{th:Zakai} cannot be obtained by invoking classical results, but novel estimates need to be derived (cf.\ Lemma~\ref{lem:gronwall} and Proposition~\ref{prop:etamart}). In particular, {\color{blue}{we}} employ a change of variable formula for Lebesgue-Stieltjes integrals.

It is clear that in applications, for instance to optimal control problems, establishing uniqueness of the solution to the Zakai equation or to the Kushner-Stratonovich equation is essential. In the literature there are several approaches to tackle this problem, most notably the following four: The filtered martingale problem approach, originally proposed by \citet{kurtzocone88:filt}, and later extended to singular martingale problems in \citep{kurtzstockbridge2001:MP} (see also \citep{kurtznappo2011:FMP}); the PDE approach, as in the book by \citet{bensoussan:stochcontrol} (see also \citep[Section~4.1]{bain:fundofstochfilt}); the functional analytic approach, introduced by \citet{lucicheunis2001:uniqueness} (see also \citep[Section~4.2]{bain:fundofstochfilt}); the density approach, studied in \citet{kurtzxiong1999:SPDEs} (see also \citep[Section~7]{bain:fundofstochfilt} and \citep{xiong:intrtostochfiltth}).

The first three methods allow to prove uniqueness of the solution to the Zakai equation in a suitable class of measure-valued processes. However, they  do not guarantee that the unique measure-valued process solution to the Zakai equation admits a density process, a fact that has an impact on the study of the separated problem. Indeed, without requiring or establishing conditions guaranteeing existence of such a density process, the separated problem must be formulated in an appropriate Banach space of measures and, as a consequence, the \emph{Hamilton-Jacobi-Bellman} (HJB) equation associated to the separated problem must be formulated in such a general setting as well. As a matter of fact, only recently some techniques have been developed to treat this case, predominantly in the theory of mean-field games (an application to optimal control problems with partial observation is given in \citep{bandini2019:wasserstein}).

%The most general approach appears to be the first one; the second one is usually applicable only in specific cases, for instance when the state space of the signal process is Euclidean; the third one has the disadvantage, from the point of view of optimal control under partial observation, that the solution to the Zakai equation is not required to be $\bbY$-adapted, but only $\bbF$-adapted.}
%
%\red{These first three approaches have a common drawback, from the optimal control point of view: while it is desirable to find the unique measure-valued process solution to the Zakai equation, there is no guarantee that such a process admits a density process. This would force to formulate the separated problem in an appropriate Banach space of measures. If one aims, in particular, at studying the \emph{Hamilton-Jacobi-Bellman} (HJB) equation associated to the separated problem, this equation must be formulated in such a general setting and is, in general, difficult to analyze; only recently some techniques have been developed to treat this case, predominantly in the theory of mean-field games (an application to optimal control problem with partial observation is given in \citep{bandini2019:wasserstein}).} 

A more common approach in the literature considers, instead, the density process as the state variable for the separated problem. If it is possible to show that such a density process is the unique solution of a suitable SPDE in $\dL^2(\R^m)$, the so-called \emph{Duncan-Mortensen-Zakai} equation, then this $\dL^2(\R^m)$-valued process can be equivalently used as state variable in the separated problem. This is particularly convenient, since for optimal control problems in Hilbert spaces a well-developed theory is available, at least in the regular case (see, e.g., the monograph by \citet{fabbri:soc}).
Therefore, in view of possible future applications to singular optimal control problems under partial observation, we adopted the density approach to prove that, under suitable assumptions, the unnormalized filtering process is the unique solution to the Zakai equation in the class of measure-valued processes admitting a density with respect to Lebesgue measure.

We show this result, first, in the case where $\nu$ is a continuous process (cf.\ Theorem~\ref{th:Zakaiuniqnucont}) and, then, in the case where the jump times of $\nu$ do not accumulate in the time interval $[0,T]$ (see Theorem~\ref{th:Zakaiuniqnujump}). As we already observed, although this assumption prevents to achieve full generality, it has a clear interpretation and it is usually satisfied by the examples considered in the literature.
From a technical side, it seems that a direct approach using the method proposed by \citep{kurtzxiong1999:SPDEs} is not feasible to treat the case of accumulating jumps, due to difficulties in estimating crucial quantities in the arguments used, that are related to the jump component of filtering process. A possible workaround might consists in approximating the process $\nu$ by cutting away jumps of size smaller than some $\delta > 0$ and then, provided that a suitable tightness property holds, pass to the limit, as $\delta \to 0$, in the relevant equations. However, this is a delicate and lengthy reasoning, which is left for future research.

%\subsection{Related literature}\label{sec:literature}\mbox{}
%\bigskip
%
%\textbf{TO BE WRITTEN}

\medskip

The rest of this paper is organized as follows. Section~\ref{sec:notation} provides notation used throughout this work. Section~\ref{sec:model} introduces the filtering problem. The Zakai and Kushner-Stratonovich equations are then derived in Section~\ref{sec:Zakai}, while the uniqueness of the solution to the Zakai equation is proved in Section~\ref{sec:uniqueness}. Finally, Appendix \ref{app:technical} collects the proof of technical results.

\subsection{Notation}
\label{sec:notation}

In this section we collect the main notation used in this work.
Throughout the paper the set $\N$ denotes the set of natural integers $\N = \{1, 2, \dots \}$, $\N_0 = \{0, 1, \dots \}$, and $\R$ is the set of real numbers. 

For any $m \times n$ matrix $A = (a_{ij})$, the symbol $A^*$ denotes its transpose and $\norm{A}$ is its Frobenius norm; i.e., $\norm{A} = (\sum_{i=1}^m \sum_{j=1}^n a_{ij}^2)^{1/2}$.
%\begin{equation*}
%\norm{A} = \left(\sum_{i=1}^m \sum_{j=1}^n a_{ij}^2\right)^{\frac 12}.
%\end{equation*}
For any $x,y\in \R^d$, $\norm{x}$ denotes the Euclidean norm of $x$ and $x \cdot y = x^* y$ indicates the inner product of $x$ and $y$. 
%i.e.,
%\begin{equation*}
%\norm{x} = \left(\sum_{i=1}^d x_i^2\right)^{\frac 12}, \qquad x \cdot y = x^* y = \sum_{i=1}^d x_i y_i.
%\end{equation*} 
For a fixed Hilbert space $H$, we denote its inner product by $\langle \cdot, \cdot \rangle$ and by $\norm{\cdot}_H$ its norm.

The symbol $\ind_C$ denotes the indicator function of a set $C$, while $\one$ is the constant function equal to $1$. The symbol $\int_a^b$ denotes $\int_{[a,b]}$ for any $-\infty < a \leq b < +\infty$.

%For any $d \in \N$ and $T > 0$, we denote by $\dC^{1,2}([0,T] \times \R^d)$ the set of real-valued measurable functions on $[0,T] \times \R^d$, that are continuously differentiable once with respect to the first variable and twice with respect to the second. We write $\dC^{1,2}_b([0,T] \times \R^d)$ when the considered function is bounded, with bounded derivatives. 
For any $d \in \N$ and $T > 0$, we denote by $\dC^{1,2}_b([0,T] \times \R^d)$ the set of real-valued bounded measurable functions on $[0,T] \times \R^d$, that are continuously differentiable once with respect to the first variable and twice with respect to the second, with bounded derivatives. 
For any such function, the symbol $\partial_t$ denotes the derivative with respect to the first variable, while $\dD_x = (\partial_1, \dots, \partial_d)$ and $\dD^2_x = (\partial^2_{ij})_{i,j = 1}^d$ denote, respectively, the gradient and the Hessian matrix with respect to the second variable. 
%Furthermore, we simply write $\dC^{2}(\R^d)$ (resp.\ $\dC^{2}_b(\R^d)$), when we are considering a function on $\R^d$ that is twice continuously differentiable (resp.\ twice continuously differentiable with bounded derivatives).
Furthermore, we simply write $\dC^{2}_b(\R^d)$, when we are considering a real-valued bounded function on $\R^d$ that is twice continuously differentiable with bounded derivatives.

For any $d \in \N$ we indicate by $\dL^2(\R^d)$ the set of all square-integrable functions with respect to Lebesgue measure and for all $k \in \N$ we denote by $W^2_k(\R^d)$ the Sobolev space of all functions $f \in \dL^2(\R^d)$ such that the partial derivatives $\partial^\alpha$ exist in the weak sense and are in $\dL^2(\R^d)$, whenever the multi-index $\alpha = (\alpha_1, \dots, \alpha_d)$ is such that $\alpha_1 + \cdots + \alpha_d \leq k$. 

For a fixed metric space $E$, endowed with the Borel $\sigma$-algebra, we denote by $\cP(E)$, $\cM_+(E)$, and $\cM(E)$ the sets of probability, finite positive, and finite signed measures on $E$, respectively. If $\mu \in \cM(E)$, then $\abs{\mu} \in \cM_+(E)$ is the total variation of $\mu$.

For any given c\`adl\`ag stochastic process $Z = (Z_t)_{t \geq 0}$ defined on a probability space $(\Omega, \cF, \bbP)$, we denote by $(Z_{t^-})_{t \geq 0}$ the left-continuous version of $Z$ (i.e., $Z_{t^-} = \lim_{s \to t^-} Z_s, \, \bbP$-a.s., for any $t \geq 0$), and by $\Delta Z_t \coloneqq Z_t - Z_{t^-}$ the jump of $Z$ at time $t \geq 0$. If $Z$ has finite variation over $[0,t]$, for all $t \geq 0$, $\abs{Z}$ (resp.\ $Z^+$, $Z^-$) is the \emph{variation process} (resp.\ the positive part process, the negative part process) of $Z$, i.e., the process such that, for each $t \in [0,T]$ and $\omega \in \Omega$, $\abs{Z}_t(\omega)$ (resp.\ $Z^+_t(\omega)$, $Z^-_t(\omega)$) is the total variation (resp.\ the positive part, the negative part) of the function $s \mapsto Z_s(\omega)$ on $[0,t]$. It is useful to remember that $Z = Z^+ - Z^-$, $\abs{Z} = Z^+ + Z^-$, and that $Z^+$, $Z^-$ are non-decreasing processes.

Finally, with the word \emph{measurable} we refer to \emph{Borel-measurable}, unless otherwise specified.

%Throughout the paper the set $\N$ denotes the set of natural integers $\N = \{1, 2, \dots \}$, whereas $\N_0 = \{0, 1, \dots \}$. We indicate by $\cN$ the collection of null sets in some specified probability space.
%
%For a fixed metric space $E$, we denote by $d_E$ its metric and by $\dB_b(E)$ the set of real-valued bounded measurable functions on $E$. The symbol $\cB(E)$ indicates the Borel $\sigma$-algebra on $E$ and we denote by $\cP(E)$ the set of probability measures on $E$. The set of $E$-valued c\`adl\`ag functions on $[0,+\infty)$ is denoted by $\cD_E$. We will always endow it with the Skorokhod topology and the Borel $\sigma$-algebra. The set $\widetilde \cD_E \subset \cD_E$ contains all trajectories in $\cD_E$ that are piecewise constant and do not exhibit explosion in finite time (i.e., if $(t_n)_{n \in \N} \subset (0,+\infty]$ is the collection of discontinuity points of some trajectory, then $\lim_{n \to \infty} t_n = +\infty$).
%
%The symbol $\ind_C$ denotes the indicator function of a set $C$, while $\one$ is the constant function equal to $1$. The symbol $\int_a^b$ denotes $\int_{(a,b]}$ for any $-\infty < a \leq b < +\infty$.
%
%Finally, with the word \emph{measurable} we refer to \emph{Borel-measurable}, unless otherwise specified.

%%%%%%%%%%%%%%%%%%%%%%%%%%%%%%%%%%%

\section{Model formulation}
\label{sec:model}

Let $T > 0$ be a given fixed time horizon and $(\Omega, \cF, \bbF \coloneqq (\cF_t)_{t \in [0,T]}, \bbP)$ be a complete filtered probability space, with $\bbF$ satisfying the usual assumptions.

Define on $(\Omega, \cF, \bbF, \bbP)$ two independent $\bbF$-adapted standard Brownian motions $W$ and $\overline B$, taking values in $\R^d$ and $\R^n$, respectively, with $d, n \in \N$. Let then $\gamma \colon [0,T] \to \R^{n \times n}$ be a measurable function such that, for each $t \in [0,T]$, $\gamma(t)$ is symmetric, with $\gamma_{ij}(t) \in \dL^2([0,T])$, for all $i, j = 1, \dots, n$, and uniformly positive definite; that is, there exists $\delta > 0$ such that for all $t \in [0,T]$ and all $x \in \R^m$
\begin{equation}\label{eq:gammaunifpd}
\gamma(t) x \cdot x \geq \delta \norm{x}^2.
\end{equation}
These requirements guarantee in particular that the \emph{observed process} $Y = (Y_t)_{t \in [0,T]}$, defined as
\begin{equation}\label{eq:Ydef}
Y_t = y + \int_0^t \gamma(t) \, \dd \overline B_t, \quad t \in [0,T], \, y \in \R^n, 
\end{equation}
is an $\R^n$-valued $\bbF$-adapted martingale, of which we take a continuous version. Clearly, it holds
\begin{equation}
\label{eq:YSDE}
\dd Y_t = \gamma(t) \, \dd \overline B_t, \quad t \in [0,T], \qquad Y_0 = y \in \R^n.
\end{equation}

\begin{remark}
It is not restrictive to require that $\gamma$ is symmetric (and uniformly positive definite). Indeed, suppose that $\overline B$ is an $\R^k$-valued $\bbF$-adapted standard Brownian motion and that $\gamma \colon [0,T] \to \R^{n \times k}$ is such that $\gamma \gamma^*(t):=\gamma(t) \gamma^*(t)$ is uniformly positive definite. Then, we can obtain an equivalent model defining the $\R^n$-valued $\bbF$-adapted standard Brownian motion $\widetilde B = (\widetilde B_t)_{t \in [0,T]}$ through:
\begin{equation*}
\dd \widetilde B_t \coloneqq \bigl(\gamma \gamma^*(t)\bigr)^{-1/2} \, \gamma(t) \, \dd \overline B_t, \quad t \in [0,T].
\end{equation*} 
In fact, in this case~\eqref{eq:YSDE} becomes:
\begin{equation*}
\dd Y_t = \bigl(\gamma \gamma^*(t)\bigr)^{1/2} \, \dd \widetilde B_t, \quad t \in [0,T], \qquad Y_0 = y \in \R^n,
\end{equation*}
and clearly $\bigl(\gamma \gamma^*(t)\bigr)^{1/2}$ is symmetric (and uniformly positive definite).
\end{remark}

We indicate with the symbol $\bbY$ the completed natural filtration generated by $Y$, i.e., $\bbY \coloneqq (\cY_t)_{t \in [0,T]}$, with $\cY_t \coloneqq \{Y_s \colon 0 \leq s \leq t\} \lor \cN$, where $\cN$ is the collection of all $\bbP$-null sets. 

\begin{remark}\label{rem:filtrY}
Notice that since $\gamma$ is invertible, $\bbY$ coincides with the completed natural filtration generated by $\overline B$ and is, therefore, right-continuous. These facts will be useful in the sequel.
\end{remark}

Next, we consider a probability distribution $\xi$ on $\R^m$; measurable functions $b \colon [0,T] \times \R^m \to \R^m$ and $\sigma \colon [0,T] \times \R^m \to \R^{m \times d}$, with $m \in \N$; a $\bbY$-adapted, c\`adl\`ag, $\R^m$-valued process $\nu$ whose components have paths of finite variation. We introduce the following requirements, that will be in force throughout the paper.

\newpage

\begin{assumption}\label{hyp:main}
\mbox{}
\begin{enumerate}[label=(\roman*)]
\item There exist constants $C_b$ and $L_b$ such that for all $t \in [0,T]$
\begin{equation}\label{eq:blip}
\norm{b(t,x) - b(t,x')} \leq L_b \norm{x - x'} \quad \text{and} \quad \norm{b(t,0)} \leq C_b, \quad \forall x, x' \in \R^m.
\end{equation}

\item There exist constants $C_\sigma$ and $L_\sigma$ such that for all $t \in [0,T]$
\begin{equation}\label{eq:sigmalip}
\norm{\sigma(t,x) - \sigma(t,x')} \leq L_\sigma \norm{x - x'} \quad \text{and} \quad \norm{\sigma(t,0)} \leq C_\sigma, \quad \forall x, x' \in \R^m.
\end{equation}

\item The probability law $\xi \in \cP(\R^m)$ satisfies
\begin{equation}\label{eq:xisq}
\int_{\R^m} \norm{x}^2 \, \xi(\dd x) < +\infty.
\end{equation}

\item\label{hyp:nu} The $\R^m$-valued process $\nu$ is $\bbY$-adapted, c\`adl\`ag, with $\nu_{0^-} = 0$. Its components have paths of finite variation, which in particular satisfy
\begin{equation}\label{eq:nufinitefuel}
\abs{\nu^i}_T \leq K, \qquad \forall i = 1, \dots, m,
\end{equation}
for some constant $K > 0$.
\end{enumerate}
\end{assumption}

Under Assumption \ref{hyp:main}, for any such $\nu$, the following SDE for the signal process $X = (X_t)_{t \in [0,T]}$ admits a unique strong solution:
\begin{equation}\label{eq:XSDE}
\dd X_t = b(t, X_t) \, \dd t + \sigma(t, X_t) \, \dd W_t + \dd \nu_t, \quad t \in [0,T], \qquad X_{0^-} \sim \xi \in \cP(\R^m).
\end{equation}
It is important to bear in mind, especially in applications to optimal control problems, that the solution to \eqref{eq:XSDE} and all the quantities that are related to it depend on the the probability distribution $\xi$ and on $\nu$. However, for the ease of exposition, we will not stress this dependence in the sequel.

\begin{remark}\label{rem:XSDE}
Conditions \eqref{eq:blip} and \eqref{eq:sigmalip} ensure that SDE \eqref{eq:XSDE} admits a unique strong solution for any $\nu$. If we assume, in addition, that \eqref{eq:xisq} and \eqref{eq:nufinitefuel} hold, then we have that, for some constant $\kappa$ depending on $T$, $b$, $\sigma$, and $\nu$,
\begin{equation}\label{eq:Xestimate}
\bbE[\sup_{t \in [0,T]} \norm{X_t}^2] \leq \kappa(1+ \bbE[\norm{X_{0^-}}^2]) < +\infty,
\end{equation}
since $\bbE[\norm{X_{0^-}}^2] = \int_{\R^m} \norm{x}^2 \, \xi(\dd x)$. Proofs of these statements are standard and can be found, for instance, in \citep{cohen:stochcalculus,protter2004}.
\end{remark}

We finally arrive to the model we intend to analyze \emph{via} a change of measure. Let $h \colon [0,T] \times \R^m \to \R^n$ be a measurable function satisfying the following condition, that will stand from now on. 

\begin{assumption}\label{hyp:h}
There exists a constant $C_h$ such that for all $t \in [0,T]$
\begin{equation}\label{eq:hlin}
\norm{h(t,x)} \leq C_h(1+\norm{x}), \quad \forall x \in \R^m.
\end{equation}
\end{assumption}
For all $t \in [0,T]$ define then:
\begin{equation}\label{eq:eta}
\eta_t \coloneqq \exp\left\{\int_0^t \gamma^{-1}(s) h(s, X_s) \, \dd \overline B_s - \dfrac 12 \int_0^t \norm{\gamma^{-1}(s) h(s, X_s)}^2 \, \dd s\right\}.
\end{equation}

By Proposition~\ref{prop:etamart}, $\eta$ is a $(\bbP,\bbF)$-martingale, under Assumptions~\ref{hyp:main} and~\ref{hyp:h}.
Therefore, we can introduce the probability measure $\widetilde \bbP$ on $(\Omega, \cF_T)$ satisfying%

%The next result guarantees the martingale property of $\eta$. Its proof can be found in Appendix \ref{app:technical}.
%\begin{proposition}\label{prop:etamart}
%Under Assumptions \ref{hyp:main} and \ref{hyp:h}, the process $\eta$, defined in \eqref{eq:eta}, is a $(\bbP,\bbF)$-martingale.
%\end{proposition}
%Thanks to Proposition~\ref{prop:etamart}, we can introduce the probability measure $\widetilde \bbP$ on $(\Omega, \cF_T)$ satisfying
\begin{equation}\label{eq:girsanov}
\frac{\dd \widetilde \bbP}{\dd \bbP} \bigg|_{\cF_T} = \eta_T.
\end{equation}
By Girsanov's Theorem, the process $B = (B_t)_{t \in [0,T]}$ given by $B_t \coloneqq \overline B_t - \int_0^t \gamma^{-1}(s) h(s, X_s) \, \dd s$, $t \in [0,T]$,
%\begin{equation*}
%B_t \coloneqq \overline B_t - \int_0^t \gamma^{-1}(s) h(s, X_s) \, \dd s, \quad t \in [0,T],
%\end{equation*}
is a $(\widetilde \bbP,\bbF)$-Brownian motion, and under $\widetilde \bbP$ the dynamics of the observed process are provided by the SDE:
\begin{equation}
\label{eq:YSDEctrl}
\dd Y_t = h(t, X_t) \, \dd t + \gamma(t) \, \dd B_t, \quad t \in [0,T], \qquad Y_0 = y \in \R^n.
\end{equation}

We see that equations~\eqref{eq:XSDE} and~\eqref{eq:YSDEctrl} are formally equivalent to model~\eqref{eq:model-intro}. Observe, however, that the Brownian motion driving~\eqref{eq:YSDEctrl} is not a source of noise given \emph{a priori}, but it is obtained through a probability measure change; moreover, our construction implies that it depends on the initial law $\xi$ and on process $\nu$. This formulation is typical in optimal control problems under partial observation (see, e.g., \citep[Chapter~8]{bensoussan:stochcontrol}) and has the advantage of avoiding the circularity problem discussed in the Introduction.

\begin{remark}\label{rem:nufinitefuel}
If the partially observed system defined by \eqref{eq:XSDE} and \eqref{eq:YSDEctrl} describes the state variables of a singular optimal control problem, where $\nu$ is the control process, then condition \eqref{eq:nufinitefuel} implies that the singular control is of \emph{finite fuel} type (see \citet{elkarouikaratzas, ocone-etal} for early contributions).
\end{remark}

\begin{remark}
It is worth noticing that all the results in this paper remain valid if we allow $b$ to depend also on $\omega$, as long as the map $(\omega,t) \mapsto b(\omega,t,x)$ is $\bbY$-adapted and c\`adl\`ag, for each $x \in \R^m$, and condition \eqref{eq:blip} holds uniformly with respect to $\omega$ (i.e., $L_b$ and $C_b$ do not depend on $\omega$). To extend our subsequent results to this case, it suffices to apply the so-called \emph{freezing lemma} whenever necessary.

This modeling flexibility is important when it comes to treating controlled dynamics where $b$ is a deterministic function, depending on an additional parameter representing the action of a regular control $\alpha = (\alpha_t)_{t \in [0,T]}$. Clearly, this control must be c\`adl\`ag and $\bbY$-adapted, i.e., based on the available information. The measurability requirement above ensures that the map $(\omega, t) \mapsto b(t,x,\alpha_t(\omega))$ is $\bbY$-adapted.
\end{remark}

%%%%%%%%%%%%%%%%%%%%%%%%%%%%%%%%%%%%%%%%%%%

\section{The Zakai and Kushner-Stratonovich equations}
\label{sec:Zakai}

In this section we will deduce the Zakai equation satisfied by the unnormalized filtering process, defined in~\eqref{eq:rho}. As a byproduct, we will deduce the Kushner-Stratonovich equation satisfied by the filtering process (see~\eqref{eq:filter} for its definition). As anticipated in the Introduction, we will use the reference probability approach to achieve these results. The reference probability will be precisely $\bbP$, under which the observed process is Gaussian and satisfies \eqref{eq:Ydef}. However, the probability measure that matters from a modelization point of view is $\widetilde \bbP$, which defined in \eqref{eq:girsanov}. Indeed, we will define the filtering process under this measure. It is important to bear in mind that $\widetilde \bbP$ and $\bbP$ are equivalent probability measures. Hence, any result holding $\bbP$-a.s., holds also $\widetilde \bbP$-a.s., and we will write only the first of these two wordings.

The following technical lemma is needed. Its proof is a consequence of the facts highlighted in Remark~\ref{rem:filtrY} and it is omitted (the reader may refer, for instance, to~\citep[Prop. 3.15]{bain:fundofstochfilt}). In what follows we will denote $\cY \coloneqq \cY_T$.
\begin{lemma}\label{lem:condexp}
Let $Z$ be an $\cF_t$-measurable, $\bbP$-integrable random variable, $t \in [0,T]$. Then
\begin{equation*}
\bbE[Z \mid \cY_t] = \bbE[Z \mid \cY].
\end{equation*} 
\end{lemma}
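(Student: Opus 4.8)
The plan is to exploit the special structure of the observation filtration highlighted in Remark~\ref{rem:filtrY}: since $\gamma$ is invertible, $\bbY$ is exactly the completed natural filtration of the $\bbF$-Brownian motion $\overline B$. Consequently the ``future observation'' beyond time $t$ is encoded entirely in the increments of $\overline B$ after time $t$, which—$\overline B$ being a Brownian motion with respect to the full filtration $\bbF$—are independent of $\cF_t$. As $Z$ is $\cF_t$-measurable, conditioning on all of $\cY$ should add nothing beyond conditioning on $\cY_t$. I would make this precise via a monotone class (Dynkin) argument.

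Concretely, first I would set $\cG_t \coloneqq \sigma(\overline B_s - \overline B_t : t \leq s \leq T)$ and record that $\cG_t$ is independent of $\cF_t$, which is the defining property of an $\bbF$-Brownian motion (augmentation by $\cN$ preserves this, since $\bbP$-null sets are independent of everything). Next, using Remark~\ref{rem:filtrY}, I would observe that $\cY_t \subseteq \cF_t$, that $\cG_t \subseteq \cY$, and that $\cY = \cY_t \vee \cG_t$ up to $\bbP$-null sets (for $s \leq t$, $Y_s$ is $\cY_t$-measurable, while for $s > t$, $Y_s - Y_t = \int_t^s \gamma(u)\,\dd\overline B_u$ is $\cG_t$-measurable). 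Hence the family $\{A \cap G : A \in \cY_t,\ G \in \cG_t\}$ is a $\pi$-system generating $\cY$.

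Since $\bbE[Z \mid \cY_t]$ is $\cY_t$-measurable, hence $\cY$-measurable, it then remains to verify the defining identity of conditional expectation on this $\pi$-system: for $A \in \cY_t$ and $G \in \cG_t$,
\[
\bbE\bigl[\ind_A\,\ind_G\,\bbE[Z \mid \cY_t]\bigr] = \bbE[Z\,\ind_A\,\ind_G].
\]
On the left, $\ind_A\,\bbE[Z\mid\cY_t]$ is $\cF_t$-measurable and integrable while $\ind_G$ is bounded and independent of $\cF_t$, so the left side factors as $\bbE[\ind_A\,\bbE[Z\mid\cY_t]]\,\bbP(G) = \bbE[Z\,\ind_A]\,\bbP(G)$, the last step by the definition of $\bbE[Z\mid\cY_t]$ together with $A \in \cY_t$. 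On the right, $Z\,\ind_A$ is likewise $\cF_t$-measurable and independent of $\ind_G$, giving $\bbE[Z\,\ind_A]\,\bbP(G)$. The two sides coincide. Since the collection of sets $A' \in \cY$ on which $\bbE[(\,\bbE[Z\mid\cY_t] - Z)\,\ind_{A'}] = 0$ is a $\lambda$-system containing this $\pi$-system, Dynkin's theorem extends the identity to all of $\cY$, yielding $\bbE[Z \mid \cY] = \bbE[Z \mid \cY_t]$.

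The argument is routine once the independence is in hand; the only point requiring genuine care is establishing $\cY = \cY_t \vee \cG_t$ (mod null sets) and, above all, that $\cG_t$ is independent of $\cF_t$ rather than merely of $\cY_t$. This is exactly where the hypothesis that $\overline B$ is a Brownian motion with respect to the \emph{full} filtration $\bbF$ enters, and it is what makes the $\cF_t$-measurable random variable $Z$ factor cleanly against the future noise. Handling the completion by $\bbP$-null sets is a standard technicality that affects neither independence nor the conditional expectations involved.
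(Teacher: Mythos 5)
Your proposal is correct and takes essentially the same route as the paper's (cited) proof: the paper omits the argument, stating it follows from Remark~\ref{rem:filtrY} together with Proposition~3.15 of Bain--Crisan, and that proof is precisely yours — since $\bbY$ is the completed filtration of the $\bbF$-Brownian motion $\overline B$, the post-$t$ increments generating the future of $\cY$ are independent of $\cF_t$, and a $\pi$--$\lambda$ (monotone class) argument transfers the conditional-expectation identity from the generating $\pi$-system to all of $\cY$. Your explicit treatment of the decomposition $\cY = \cY_t \vee \cG_t$ modulo null sets and of the completion is exactly the "standard technicality" the paper leaves to the reference.
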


As previously anticipated, the \emph{filtering process} $\pi = (\pi_t)_{t \in [0,T]}$ is a $\cP(\R^m)$-valued process providing the conditional law of the signal $X$ at each time $t \in [0,T]$, given the available observation up to time $t$. It is defined for any bounded and measurable $\phi \colon [0,T] \times \R^m \to \R$ as:
\begin{equation}\label{eq:filter}
\pi_t(\phi_t) \coloneqq \widetilde \bbE\bigl[\phi(t, X_t) \bigm| \cY_t \bigr], \quad t \in [0,T],
\end{equation}
where $\phi_t(x) \coloneqq \phi(t,x)$, for any $(t,x) \in [0,T] \times \R^m$.
Since $\R^m$ is a complete and separable metric space, $\pi$ is a well-defined, $\cP(\R^m)$-valued and $\bbY$-adapted process.\footnote{Without any particular assumptions on $\bbY$, the filtering process is adapted with respect to the right-continuous enlargement of $\bbY$. However, as previously observed, in our model $\bbY$ is already right-continuous.} Moreover, $\pi$ admits a c\`adl\`ag modification, since $X$ is c\`adl\`ag (see, e.g. \citep[Cor. 2.26]{bain:fundofstochfilt}). Hence, in the sequel we shall consider $\pi$ as a $\bbY$-progressively measurable process.

We recall the useful Kallianpur-Striebel formula, which holds thanks to Proposition~\ref{prop:etamart} for any bounded and measurable $\phi \colon [0,T] \times \R^m \to \R$ and for any fixed $t \in [0,T]$ (for a proof see, e.g., \citep[Prop. 3.16]{bain:fundofstochfilt})
\begin{equation}\label{eq:KallianpurStriebel}
\pi_t(\phi_t) = \frac{\bbE\bigl[\eta_t \phi(t, X_t) \bigm| \cY \bigr]}{\bbE\bigl[\eta_t \bigm| \cY \bigr]}, \quad \bbP\text{-a.s.}
\end{equation}

This formula allows us to define the measure-valued process $\rho = (\rho_t)_{t \in [0,T]}$, called \emph{unnormalized} conditional distribution of $X$, or \emph{unnormalized filtering process}, defined, for any bounded and measurable $\phi \colon [0,T] \times \R^m \to \R$, as:
\begin{equation}\label{eq:rho}
\rho_t(\phi_t) \coloneqq \bbE\bigl[\eta_t \phi(t, X_t) \bigm| \cY_t \bigr], \quad t \in [0,T].
\end{equation}
Given the properties of $\pi$ and of $\eta$ it is possible to show (see, e.g., \citep[Lemma~3.18]{bain:fundofstochfilt}) that $\rho$ is c\`adl\`ag and $\bbY$-adapted, hence $\bbY$-progressively measurable. Moreover, the Kallianpur-Striebel formula implies that for any bounded and measurable $\phi \colon [0,T] \times \R^m \to \R$ and for any fixed $t \in [0,T]$:
\begin{equation}\label{eq:rhopilink}
\pi_t(\phi_t) = \frac{\rho_t(\phi_t)}{\rho_t(\one)}, \quad \bbP\text{-a.s.},
\end{equation}
where $\one \colon \R^m \to \R$ is the constant function equal to $1$.

To describe the local dynamics of the signal process $X$, let us introduce the operator $\cA$, defined for any $\phi \in \dC^{1,2}_b([0,T] \times \R^m)$ as:
\begin{equation}\label{eq:operatorA}
\cA \phi(t,x) \coloneqq \dD_x \phi(t,x) \cdot b(t,x) + \frac 12 \tr\bigl(\dD^2_x \phi(t,x) \, \sigma\sigma^*(t,x)\bigr), \quad (t,x) \in [0,T] \times \R^m.
\end{equation}
We can also define the family of operators $\cA_t$, $t \in [0,T]$, given by:
\begin{equation*}
\cA_t \phi(x) = \dD_x \phi(x) \cdot b(t,x) + \frac 12 \tr\bigl(\dD^2_x \phi(x) \, \sigma\sigma^*(t,x)\bigr), \quad x \in \R^m, \, \phi \in \dC^2_b(\R^m).
\end{equation*}

To obtain the Zakai equation we need, first, to write the semimartingale decomposition of the process $\bigl(\phi(t,X_t)\bigr)_{t \in [0,T]}$. For any $\phi \in \dC^{1,2}_b([0,T] \times \R^m)$ we have, applying It\^o's formula:
\begin{multline}\label{eq:phisemimart}
\phi(t,X_t) = \phi(0, X_{0^-}) + \int_0^t \bigl[\partial_s + \cA\bigr] \phi(s, X_s) \, \dd s + \int_0^t \dD_x \phi(s, X_{s^-}) \, \dd \nu_s
\\
+ \sum_{0 \leq s \leq t} \Bigl[\phi(s,X_s) - \phi(s, X_{s^-}) - \dD_x \phi(s, X_{s^-}) \cdot \Delta \nu_s\Bigr] + M_t^\phi, \quad t \in [0,T].
\end{multline}
Here, $M_t^\phi \coloneqq \int_0^t \dD_x \phi(t,X_t) \, \sigma(t, X_t) \, \dd W_t$, $t \in [0,T]$, is a square-integrable $(\bbP, \bbF)$-martingale, thanks to conditions \eqref{eq:blip} and \eqref{eq:sigmalip} (see also Remark \ref{rem:XSDE}).

We need the following two technical Lemmata. Up to minor modifications, their proofs follow that of~\citep[Lemma~3.21]{bain:fundofstochfilt}.
\begin{lemma}\label{lem:exchangeB}
Let $\Psi = (\Psi_t)_{t \in [0,T]}$ be a real-valued $(\bbP, \bbF)$-progressively measurable process such that
\begin{equation*}
\bbE\biggl[\int_0^T \Psi_s^2 \, \dd s \biggr] < +\infty.
\end{equation*}
Then, for any $j = 1, \dots, k$ we have
\begin{equation*}
\bbE\biggl[\int_0^t \Psi_s \, \dd \overline B_s^j \biggm| \cY\biggr] = \int_0^t \bbE[\Psi_s \mid \cY] \, \dd \overline B_s^j, \quad t \in [0,T].
\end{equation*}
\end{lemma}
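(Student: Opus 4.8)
The plan is to follow the classical scheme for commuting a conditional expectation with an Itô integral, adapted to the present observation structure. First I would record the structural fact that makes the right-hand side meaningful: since by Remark~\ref{rem:filtrY} the filtration $\bbY$ is generated by $\overline B$, the process $\overline B$ is a $(\bbP,\bbY)$-Brownian motion, and $\cY = \cY_T = \cY_s \lor \cG_s$ (up to $\bbP$-null sets), where $\cG_s \coloneqq \sigma(\overline B_r - \overline B_s \colon s \le r \le T)$. Because $\Psi$ is $\bbF$-progressively measurable, $\Psi_s$ is $\cF_s$-measurable, and $\cG_s$ is independent of $\cF_s \supseteq \sigma(\Psi_s) \lor \cY_s$. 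The standard independence property of conditional expectation (if $\cG \perp \sigma(Z) \lor \cH$ then $\bbE[Z \mid \cH \lor \cG] = \bbE[Z \mid \cH]$) then yields, for each fixed $s \in [0,T]$,
\[
\bbE[\Psi_s \mid \cY] = \bbE[\Psi_s \mid \cY_s] \quad \bbP\text{-a.s.}
\]
Consequently $\bbE[\Psi_s \mid \cY]$ admits a $\bbY$-progressively measurable version (e.g.\ the optional projection of $\Psi$ onto $\bbY$, which at the deterministic time $s$ equals $\bbE[\Psi_s \mid \cY_s]$), so that the integral on the right-hand side is a genuine Itô integral against the $\bbY$-Brownian motion $\overline B^j$.

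Next I would establish the identity for elementary integrands $\Psi_s = \sum_{i=0}^{N-1}\zeta_i \ind_{(t_i,t_{i+1}]}(s)$, with $\zeta_i \in \dL^2(\Omega,\cF_{t_i},\bbP)$ and $0 = t_0 < \cdots < t_N = T$. For such $\Psi$ the left-hand side equals $\bbE\bigl[\sum_i \zeta_i(\overline B^j_{t_{i+1}\land t} - \overline B^j_{t_i \land t}) \bigm| \cY\bigr]$; since the increments of $\overline B$ are $\cY$-measurable, they factor out of the conditional expectation, giving $\sum_i (\overline B^j_{t_{i+1}\land t} - \overline B^j_{t_i \land t})\,\bbE[\zeta_i \mid \cY]$. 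On the other hand, using the preliminary fact $\bbE[\zeta_i \mid \cY] = \bbE[\zeta_i \mid \cY_{t_i}]$, the right-hand side is the Itô integral of the $\bbY$-adapted simple integrand $s \mapsto \sum_i \bbE[\zeta_i\mid\cY]\,\ind_{(t_i,t_{i+1}]}(s)$, namely $\sum_i \bbE[\zeta_i\mid\cY]\,(\overline B^j_{t_{i+1}\land t} - \overline B^j_{t_i\land t})$. The two expressions coincide, proving the lemma for elementary $\Psi$.

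Finally I would pass to the limit. Given a progressively measurable $\Psi \in \dL^2(\bbP \otimes \dd t)$, I choose elementary $\Psi^{(n)}$ with $\bbE\int_0^T|\Psi^{(n)}_s - \Psi_s|^2\,\dd s \to 0$. By the Itô isometry and the fact that $\bbE[\,\cdot\mid\cY]$ is an $\dL^2(\bbP)$-contraction, $\bbE\bigl[\int_0^t \Psi^{(n)}_s\,\dd\overline B^j_s \bigm| \cY\bigr] \to \bbE\bigl[\int_0^t \Psi_s\,\dd\overline B^j_s\bigm|\cY\bigr]$ in $\dL^2(\bbP)$. For the right-hand side, conditional Jensen gives $\bbE\int_0^T \bigl|\bbE[\Psi^{(n)}_s - \Psi_s\mid\cY]\bigr|^2\,\dd s \le \bbE\int_0^T |\Psi^{(n)}_s - \Psi_s|^2\,\dd s \to 0$, so by the Itô isometry $\int_0^t \bbE[\Psi^{(n)}_s\mid\cY]\,\dd\overline B^j_s \to \int_0^t \bbE[\Psi_s\mid\cY]\,\dd\overline B^j_s$ in $\dL^2(\bbP)$. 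Since the identity holds along the approximating sequence, it holds in the limit, for every $t\in[0,T]$ and every $j$.

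The step I expect to be the main obstacle — and the only genuinely nontrivial point beyond bookkeeping — is the very first one: verifying that the right-hand side is well posed, i.e.\ that $\bbE[\Psi_s\mid\cY]$ is an admissible (adapted, jointly measurable) Itô integrand despite conditioning on the whole $\cY = \cY_T$ rather than on $\cY_s$. This is exactly what the independence of the future observation increments $\cG_s$ from $\cF_s$ buys us, collapsing $\bbE[\Psi_s\mid\cY]$ to $\bbE[\Psi_s\mid\cY_s]$; once this is in hand, the elementary-integrand computation and the $\dL^2$-approximation are routine.
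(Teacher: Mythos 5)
Your proof is correct and is essentially the argument the paper relies on: the paper defers to \citep[Lemma~3.21]{bain:fundofstochfilt}, whose proof is exactly your scheme of (i) collapsing $\bbE[\Psi_s \mid \cY]$ to $\bbE[\Psi_s \mid \cY_s]$ via independence of future observation increments (this is precisely the paper's Lemma~\ref{lem:condexp}, which you re-derive inline), (ii) verifying the identity for simple integrands by pulling the $\cY$-measurable increments out of the conditional expectation, and (iii) passing to the $\dL^2$ limit via the It\^o isometry and the contraction property of conditional expectation. No gaps; your identification of the well-posedness of the right-hand side as the only nontrivial point matches the role the paper assigns to Lemma~\ref{lem:condexp} and Remark~\ref{rem:filtrY}.
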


\begin{lemma}\label{lem:exchangeMphi}
Let $\Psi = (\Psi_t)_{t \in [0,T]}$ be a real-valued $(\bbP, \bbF)$-progressively measurable process satisfying\footnote{If $M$ is any $(\bbP, \bbF)$-square integrable martingale, $\langle M \rangle$ denotes its $(\bbP, \bbF)$-predictable quadratic variation.}
\begin{equation*}
\bbE\biggl[\int_0^T \Psi_s^2 \, \dd \langle M^\phi \rangle_s \biggr] < +\infty.
\end{equation*}
Then,
\begin{equation*}
\bbE\biggl[\int_0^t \Psi_s \, \dd M_s^\phi \biggm| \cY\biggr] = 0, \quad t \in [0,T].
\end{equation*}
\end{lemma}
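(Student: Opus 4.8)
The plan is to exploit the independence of the two driving Brownian motions $W$ and $\overline B$, which underlie the definition of $M^\phi$ and of $\cY$ respectively. Recall that $M^\phi_t = \int_0^t \dD_x\phi(s,X_s)\,\sigma(s,X_s)\,\dd W_s$ is driven by $W$, whereas $\cY = \cY_T$ is the completed $\sigma$-algebra generated by $\overline B$ (Remark~\ref{rem:filtrY}). I would introduce the enlarged filtration $\bbG := (\cG_t)_{t\in[0,T]}$ with
\[
\cG_t := \sigma(W_s : 0\le s\le t)\vee \cY,
\]
and observe that $\cG_0 = \cY$ up to $\bbP$-null sets, since $W_0 = 0$.

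The first step is to show that $W$ remains a Brownian motion with respect to $\bbG$. Since $\cY = \sigma(\overline B_s : s\le T)$ is independent of $\sigma(W_s : s\le T)$, the initial enlargement of the $W$-filtration by the independent $\sigma$-algebra $\cY$ preserves the Brownian character of $W$; thus $W$ is a $(\bbP,\bbG)$-Brownian motion. The integrand $\dD_x\phi(s,X_s)\,\sigma(s,X_s)$ is $\bbG$-adapted: indeed $X$ solves~\eqref{eq:XSDE}, in which $\nu$ is $\cY$-adapted and the stochastic integral is taken against $W$, so $X_s$ is $\cG_s$-measurable. Together with the square-integrability already used to make $M^\phi$ a $(\bbP,\bbF)$-martingale, this shows that $M^\phi$ is a square-integrable $(\bbP,\bbG)$-martingale. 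Because $\Psi$ is $\bbF$-progressively measurable, hence $\bbG$-progressively measurable, and because $\langle M^\phi\rangle$ is a pathwise quantity unaffected by the enlargement, the hypothesis $\bbE[\int_0^T\Psi_s^2\,\dd\langle M^\phi\rangle_s]<+\infty$ guarantees that
\[
N_t := \int_0^t \Psi_s\,\dd M^\phi_s, \qquad t\in[0,T],
\]
is a square-integrable $(\bbP,\bbG)$-martingale.

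The conclusion is then immediate: since $N_0 = 0$ and $\cG_0 = \cY$, the $\bbG$-martingale property yields
\[
\bbE\Bigl[\int_0^t \Psi_s\,\dd M^\phi_s\Bigm|\cY\Bigr] = \bbE[N_t\mid\cG_0] = N_0 = 0, \qquad t\in[0,T].
\]
Alternatively, following~\citep[Lemma~3.21]{bain:fundofstochfilt} more closely, one proves the identity first for elementary integrands $\Psi_s = \sum_j \xi_j\,\ind_{(s_j,s_{j+1}]}(s)$ with $\xi_j$ bounded and $\cF_{s_j}$-measurable (each term vanishing because $\bbE[M^\phi_{s_{j+1}\wedge t}-M^\phi_{s_j\wedge t}\mid\cG_{s_j}]=0$ while $\xi_j$ is $\cG_{s_j}$-measurable), and then extends to general $\Psi$ by density of elementary processes together with the It\^o isometry, using that conditional expectation is an $\dL^2$-contraction.

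The main obstacle is the first step. In the classical setting of~\citep{bain:fundofstochfilt} the signal is independent of the observation $\sigma$-algebra under the reference measure, so the $\bbG$-adaptedness of the integrand and the $\bbG$-martingale property of $M^\phi$ are transparent. Here, by contrast, $X$ depends on $\cY$ through the bounded-variation process $\nu$, so one must check carefully that enlarging by $\cY$ destroys neither the Brownian character of $W$ nor the martingale property of $M^\phi$; this is precisely where the independence of $W$ and $\overline B$ is used.
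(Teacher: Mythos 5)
Your overall strategy --- exploiting the independence of $W$ and $\overline B$ through an enlargement of filtration --- is the right idea, and it is the same mechanism underlying the proof the paper points to (\citep[Lemma~3.21]{bain:fundofstochfilt}). However, your choice of enlarged filtration $\cG_t = \sigma(W_s : 0 \le s \le t) \vee \cY$ is too small, and this creates two genuine gaps. First, $X_s$ is \emph{not} $\cG_s$-measurable: the solution of \eqref{eq:XSDE} depends not only on $W$ and $\nu$ but also on the initial condition $X_{0^-} \sim \xi$, which is $\cF_0$-measurable and carries randomness independent of (hence not measurable with respect to) $\sigma(W)\vee\cY$. So the integrand $\dD_x \phi(s,X_s)\,\sigma(s,X_s)$ of $M^\phi$ is not $\bbG$-adapted, and the claim that $M^\phi$ is a $(\bbP,\bbG)$-martingale is unsupported. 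Second, and more seriously, the lemma assumes only that $\Psi$ is $(\bbP,\bbF)$-progressively measurable; since $\cF_t \not\subset \cG_t$ for your $\cG_t$ (it contains neither $\cF_0$ nor whatever additional randomness $\bbF$ may carry), the step ``$\Psi$ is $\bbF$-progressively measurable, hence $\bbG$-progressively measurable'' is false. Without $\bbG$-progressive measurability of $\Psi$, the process $N_t = \int_0^t \Psi_s \, \dd M_s^\phi$ is not even defined as a $\bbG$-stochastic integral, so you cannot invoke the $\bbG$-martingale property for it. The same defect affects your fallback argument with elementary integrands: there $\xi_j$ is $\cF_{s_j}$-measurable, which does not imply $\cG_{s_j}$-measurability.

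Both gaps are repaired by enlarging the \emph{full} filtration instead: set $\cG_t \coloneqq \cF_t \vee \cY$. Then $\bbF \subset \bbG$, so $\Psi$ is $\bbG$-progressively measurable and the integrand of $M^\phi$ is $\bbG$-adapted, for free. The point that then requires proof is that $W$ remains a Brownian motion for this larger filtration, i.e.\ that $\sigma(W_u - W_t : u \in [t,T])$ is independent of $\cF_t \vee \cY$. This follows from the independence of $W$ and $\overline B$: by L\'evy's characterization, $(W,\overline B)$ is a $(d+n)$-dimensional $\bbF$-Brownian motion, hence its increments after $t$ are independent of $\cF_t$; writing $\cF_t \vee \cY = \cF_t \vee \sigma(\overline B_u - \overline B_t : u \in [t,T]) \vee \cN$ (using Remark~\ref{rem:filtrY} and $\cY_t \subset \cF_t$), a product/monotone-class argument gives the required independence. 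Then $M^\phi$, and hence $N$, is a square-integrable $(\bbP,\bbG)$-martingale with $N_0 = 0$, and since $\cY \subset \cG_0 = \cF_0 \vee \cY$ the tower property yields $\bbE[N_t \mid \cY] = \bbE\bigl[\bbE[N_t \mid \cG_0] \bigm| \cY\bigr] = 0$ for all $t \in [0,T]$. With this correction your argument is sound and coincides, in substance, with the argument of \citep[Lemma~3.21]{bain:fundofstochfilt} that the paper invokes.
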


We are now ready to state the main result of this section, namely, to provide the Zakai equation.
\begin{theorem}\label{th:Zakai}
Suppose that Assumptions \ref{hyp:main} and \ref{hyp:h} are satisfied and, moreover, that 
\begin{equation}\label{eq:xi3}
\int_{\R^m} \norm{x}^3 \, \xi(\dd x) < +\infty.
\end{equation}
Then, for any $\phi \in \dC_b^{1,2}([0,T] \times \R^m)$, the unnormalized conditional distribution $\rho$ satisfies the Zakai equation:
\begin{multline}\label{eq:Zakai}
\rho_t(\phi_t) = \xi(\phi_0) + \int_0^t \rho_s\bigl(\bigl[\partial_s + \cA_s\bigr] \phi_s\bigr) \, \dd s + \int_0^t \rho_{s^-}\bigl(\dD_x \phi_s\bigr) \, \dd \nu_s + \int_0^t \gamma^{-1}(s) \rho_s(\phi_s h_s) \, \dd \overline B_s
\\
+\sum_{0 \leq s \leq t} \Bigl[\rho_{s^-}\bigl(\phi_s(\cdot + \Delta \nu_s) - \phi_s - \dD_x \phi_s \cdot \Delta \nu_s\bigr)\Bigr], \quad \bbP\text{-a.s.}, \quad t \in [0,T],
\end{multline}
where $\xi(\phi_0) \coloneqq \int_{\R^m} \phi(0, x) \, \xi(\dd x)$ and, for all $t \in [0,T]$, $h_t(\cdot) \coloneqq h(t,\cdot)$,
\begin{align*}
\int_0^t \rho_{s^-}\bigl(\dD_x \phi_s\bigr) \, \dd \nu_s &\coloneqq \sum_{i=1}^m \int_0^t \rho_{s^-}\bigl(\partial_i \phi_s\bigr) \, \dd \nu^i_s,
\\
\int_0^t \gamma^{-1}(s) \rho_s(\phi_s h_s) \, \dd \overline B_s &\coloneqq \sum_{i=1}^n \sum_{j=1}^n \int_0^t \gamma^{-1}_{ij}(s) \rho_s(\phi_s h^j_s) \, \dd \overline B^i_s.
\end{align*}
\end{theorem}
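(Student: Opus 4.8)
The plan is to derive a semimartingale decomposition of the product $\eta_t\,\phi(t,X_t)$ and then take conditional expectation given $\cY_t$, exploiting that $\bbE[\,\cdot\mid\cY_t]=\bbE[\,\cdot\mid\cY]$ by Lemma~\ref{lem:condexp}. First I would record that, by It\^o's formula applied to the exponential in \eqref{eq:eta}, the Girsanov density solves $\dd\eta_t=\eta_t\,\gamma^{-1}(t)h(t,X_t)\,\dd\overline B_t$, so that $\eta$ is a continuous $(\bbP,\bbF)$-martingale (Proposition~\ref{prop:etamart}). Applying integration by parts to $\eta_t\,\phi(t,X_t)$ and inserting \eqref{eq:phisemimart}, I would use that $\eta$ is continuous and driven by $\overline B$, whereas the martingale part $M^\phi$ of $\phi(\cdot,X)$ is driven by $W$, with $W$ and $\overline B$ independent; hence the covariation $[\eta,\phi(\cdot,X)]$ vanishes and no further It\^o correction survives. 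This yields a decomposition of $\eta_t\phi(t,X_t)$ into: the initial value $\phi(0,X_{0^-})$; a $\dd s$-drift $\eta_s[\partial_s+\cA]\phi(s,X_s)$; a finite-variation part $\eta_s\,\dD_x\phi(s,X_{s^-})\,\dd\nu_s$; the jump sum $\sum_s\eta_s[\phi(s,X_s)-\phi(s,X_{s^-})-\dD_x\phi(s,X_{s^-})\cdot\Delta\nu_s]$; an $\overline B$-stochastic integral with integrand $\phi(s,X_{s^-})\eta_s\gamma^{-1}(s)h(s,X_s)$; and the $W$-martingale $\int_0^t\eta_s\,\dd M_s^\phi$.

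Next I would take $\bbE[\,\cdot\mid\cY]$ and interchange it with every integral. The term $\phi(0,X_{0^-})$ gives $\xi(\phi_0)$, since $X_{0^-}$ is independent of $\cY$, which is generated by $\overline B$. For the $\dd s$-drift and the $\dd\nu$ term I would invoke a conditional Fubini argument, legitimate because $\nu$ is $\bbY$-adapted and may thus be frozen under $\bbE[\,\cdot\mid\cY]$; together with Lemma~\ref{lem:condexp} and \eqref{eq:rho} this produces $\int_0^t\rho_s([\partial_s+\cA_s]\phi_s)\,\dd s$ and $\int_0^t\rho_{s^-}(\dD_x\phi_s)\,\dd\nu_s$, where $\eta_s=\eta_{s^-}$ identifies the integrand with the left limit $\rho_{s^-}$ (here the change-of-variable formula for Lebesgue--Stieltjes integrals is used). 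For the jump sum I would exchange expectation and summation by conditional dominated convergence — each summand is bounded by $C\eta_s\norm{\Delta\nu_s}^2$ via Taylor's formula, and $\sum_s\norm{\Delta\nu_s}\le mK$ by \eqref{eq:nufinitefuel} — and, writing $X_s=X_{s^-}+\Delta\nu_s$ and freezing the $\cY$-measurable jump $\Delta\nu_s$, recover the stated jump term. The $\overline B$-integral is treated with Lemma~\ref{lem:exchangeB}: its integrand differs from $\phi(s,X_s)\eta_s\gamma^{-1}(s)h(s,X_s)$ only on the countable, hence Lebesgue-null, set of jump times of $X$, so $X_{s^-}$ may be replaced by $X_s$ without altering the integral, and Lemma~\ref{lem:exchangeB} then gives $\int_0^t\gamma^{-1}(s)\rho_s(\phi_s h_s)\,\dd\overline B_s$. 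Finally $\bbE[\int_0^t\eta_s\,\dd M_s^\phi\mid\cY]=0$ by Lemma~\ref{lem:exchangeMphi}. Collecting the surviving terms yields \eqref{eq:Zakai}.

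The main obstacle is integrability. The functions on which $\rho$ must act are unbounded: $[\partial_s+\cA_s]\phi_s$ has quadratic growth and $\phi_s h_s$ linear growth, so I would first extend $\rho_s(\psi)=\bbE[\eta_s\psi(s,X_s)\mid\cY_s]$ to polynomially growing $\psi$, which is meaningful precisely when $\bbE[\eta_s(1+\norm{X_s}^2)]<\infty$, and, for the exchange via Lemma~\ref{lem:exchangeB} (and analogously Lemma~\ref{lem:exchangeMphi}), when $\bbE[\int_0^T\eta_s^2(1+\norm{X_s}^2)\,\dd s]<\infty$. These are exactly the bounds that cannot be borrowed from the classical theory because of the presence of $\nu$; I would establish them from the moment estimates on the pair $(\eta,X)$ furnished by Lemma~\ref{lem:gronwall} and Proposition~\ref{prop:etamart}, and it is here that the third-moment hypothesis \eqref{eq:xi3} is consumed. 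A secondary, purely technical point is the careful handling of left limits: one must check that the left limit of the c\`adl\`ag version of $\rho$ equals $\bbE[\eta_{s^-}\psi(s,X_{s^-})\mid\cY_{s^-}]$, so that the finite-variation and jump terms are genuinely governed by $\rho_{s^-}$ rather than $\rho_s$ — a distinction that matters exactly at the jump times of $\nu$.
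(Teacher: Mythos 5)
Your overall scaffolding (integration by parts on the product with the Girsanov density, projection onto $\cY$ via Lemma~\ref{lem:condexp}, the exchange Lemmata~\ref{lem:exchangeB}--\ref{lem:exchangeMphi}, vanishing covariation because $W\perp\overline B$) matches the paper's, but there is a genuine gap: you work directly with $\eta$, whereas the paper never does, and for a reason that kills your argument. To apply Lemma~\ref{lem:exchangeB} to the integrand $\Psi_s=\eta_s\,\phi(s,X_{s^-})\,\gamma^{-1}(s)h(s,X_s)$ you need $\bbE[\int_0^T\Psi_s^2\,\dd s]<+\infty$, i.e.\ a bound on $\bbE[\int_0^T\eta_s^2(1+\norm{X_s})^2\,\dd s]$ (and Lemma~\ref{lem:exchangeMphi} needs the analogous bound with $\dd\langle M^\phi\rangle_s$). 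This is a \emph{second}-moment condition on $\eta$, and it is not implied by the paper's hypotheses: writing $\bbE[\eta_s^2 f(X_s)]=\widetilde\bbE[\eta_s f(X_s)]$ and expressing $\eta$ under $\widetilde\bbP$, one sees it requires finiteness of expectations of $\exp\{\int_0^s\norm{\gamma^{-1}(u)h(u,X_u)}^2\dd u\}$; since $h$ is only of linear growth (Assumption~\ref{hyp:h}), this is an exponential of a quadratic functional of $X$ and is in general infinite for $T$ of moderate size (already for $X$ a Brownian motion and $h(x)=x$). Your plan to extract these bounds from Proposition~\ref{prop:etamart} and Lemma~\ref{lem:gronwall} cannot succeed: Proposition~\ref{prop:etamart} furnishes only \emph{first}-moment estimates, $\bbE[\eta_t]=1$ and $\bbE[\eta_t\norm{X_{t^-}}^2]\leq C$, and Lemma~\ref{lem:gronwall} is the Gronwall device used inside its proof, not a source of $\eta^2$-moments.

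This is precisely why the paper's proof has the three-step structure you skipped. Step~1 replaces $\eta$ by the bounded truncation $\eta^\epsilon_t=\eta_t/(1+\epsilon\eta_t)$ (at the price of an extra drift term from It\^o's formula); with $\eta^\epsilon\leq 1/\epsilon$ all the interchanges of conditional expectation with the $\dd s$-, $\dd\nu$-, $\dd\overline B$- and jump terms in Step~2 become legitimate under the standing assumptions alone (the $\dd\nu$ exchange additionally uses the change-of-time formula with the $\bbY$-stopping times $C^{i,\pm}_t$, since this is not a plain product-measure Fubini). Step~3 then sends $\epsilon\to 0$, and all dominations required there involve only first moments of the form $\bbE[\eta_t(1+\norm{X_t}^2)]$, which Proposition~\ref{prop:etamart} does provide; the extra drift term vanishes in the limit. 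Relatedly, you misplace the role of \eqref{eq:xi3}: the third-moment condition is not used to obtain your (unobtainable) square-integrability bound, but enters in Step~3, in the proof that the truncated stochastic-integral term converges to $\int_0^t\gamma^{-1}(s)\rho_s(\phi_s h_s)\,\dd\overline B_s$ (the argument of \citep[Theorem~3.24]{bain:fundofstochfilt}). Without the truncation step your derivation stalls at the very first interchange, so the approximation is not a dispensable technicality but the load-bearing idea of the proof.
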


\begin{proof}
Fix $t \in [0,T]$ and $\phi \in \dC_b^{1,2}([0,T] \times \R^m)$. Let us introduce the constants
\begin{align*}
C_\phi &\coloneqq \sup_{t,x} \abs{\phi(t,x)},
&
C'_\phi &\coloneqq \sup_{t,x} \norm{\dD_x \phi(t,x)},
&
C''_\phi &\coloneqq \sup_{t,x} \norm{\dD^2_x \phi(t,x)},
\end{align*}
where the suprema are taken over $[0,T] \times \R^m$. The proof is organized in several steps.

\smallskip

\noindent \textbf{Step 1. (Approximation)} For any fixed $\epsilon > 0$, define the bounded process $\eta^\epsilon = (\eta^\epsilon_t)_{t \in [0,T]}$:
\begin{equation}\label{eq:etaepsilon}
\eta^\epsilon_t \coloneqq \frac{\eta_t}{1+ \epsilon \eta_t}, \quad t \in [0,T],
\end{equation}
where $\eta$ is defined in~\eqref{eq:eta}.
Both $\eta$ and $\eta^\epsilon$ have continuous trajectories and this fact will be used in what follows without further mention.

Applying It\^o's formula we obtain
\begin{equation*}
\eta^\epsilon_t = \dfrac{1}{1+\epsilon} - \int_0^t \dfrac{\epsilon \eta_s^2}{(1+\epsilon \eta_s)^3} \norm{\gamma^{-1}(s) h(s, X_s)}^2 \, \dd s + \int_0^t \dfrac{\eta_s}{(1+\epsilon\eta_s)^2} \gamma^{-1}(s) h(s, X_s) \, \dd \overline B_s.
\end{equation*}

Denoting by $[\cdot, \cdot]$ the optional quadratic covariation operator, thanks to the integration by parts rule and recalling~\eqref{eq:phisemimart} we get
\begin{align}
&\mathop{\phantom{=}} \eta^\epsilon_t \phi(t,X_t)
= \dfrac{\phi(0,X_{0^-})}{1+\epsilon} + \int_0^t \eta^\epsilon_{s^-} \, \dd \phi(s,X_s) + \int_0^t \phi(s,X_{s^-}) \, \dd \eta^\epsilon_s + \int_0^t \dd \bigl[\eta^\epsilon, \phi(\cdot, X) \bigr]_s
\nonumber
\\
&= \dfrac{\phi(0,X_{0^-})}{1+\epsilon} + \int_0^t \eta^\epsilon_{s^-} \bigl[\partial_s + \cA\bigr] \phi(s, X_s) \, \dd s + \int_0^t \eta^\epsilon_{s^-} \dD_x \phi(s, X_{s^-}) \, \dd \nu_s
\nonumber
\\
&\quad + \sum_{0 \leq s \leq t} \eta^\epsilon_{s^-} \Bigl[\phi(s,X_s) - \phi(s, X_{s^-}) - \dD_x \phi(s, X_{s^-}) \, \Delta \nu_s\Bigr] + \int_0^t \eta^\epsilon_{s^-} \, \dd M_s^\phi
\nonumber
\\
&\quad - \int_0^t \dfrac{\epsilon \eta_s^2 \phi(s,X_{s^-})}{(1+\epsilon \eta_s)^3} \norm{\gamma^{-1}(s) h(s, X_s)}^2  \, \dd s + \int_0^t \dfrac{\eta_s \phi(s,X_{s^-})}{(1+\epsilon\eta_s)^2} \gamma^{-1}(s) h(s, X_s) \, \dd \overline B_s.
\label{eq:diffprod}
\end{align}

\smallskip

\noindent \textbf{Step 2. (Projection onto $\bbY$)} Notice that $X_t = X_{t^-} + \Delta \nu_t$, $\bbP$-a.s., $t \in [0,T]$, and that, since $\cY_{0^-} = \cY_0 = \{\emptyset, \Omega\}$, we have
\begin{equation*}
\bbE[\phi(0, X_{0^-}) \mid \cY_{0^-} ] = \int_{\R^m} \phi(0, x) \, \xi(\dd x) = \xi(\phi_0).
\end{equation*}
Therefore, taking conditional expectation with respect to $\cY$, we have (rearranging some terms)
\begin{align}
&\mathrel{\phantom{=}}\bbE[\eta^\epsilon_t \phi(t,X_t) \mid \cY ] = \dfrac{\xi(\phi_0)}{1+\epsilon} + \bbE\biggl[\int_0^t \eta^\epsilon_{s^-} \bigl[\partial_s + \cA\bigr] \phi(s, X_s) \, \dd s \biggm| \cY \biggr] 
\nonumber
\\
&+ \bbE\biggl[\int_0^t\eta^\epsilon_{s^-} \dD_x \phi(s, X_{s^-}) \, \dd \nu_s \biggm| \cY \biggr] + \bbE\biggl[\int_0^t\dfrac{\eta_s \phi(s,X_{s^-})}{(1+\epsilon\eta_s)^2} \gamma^{-1}(s) h(s, X_s) \, \dd \overline B_s \biggm| \cY \biggr]
\nonumber
\\
&+ \bbE\biggl[\sum_{0 \leq s \leq t} \eta^\epsilon_{s^-} \Bigl[\phi(s,X_{s^-} + \Delta \nu_s) - \phi(s, X_{s^-}) - \dD_x \phi(s, X_{s^-}) \cdot \Delta \nu_s\Bigr] \biggm| \cY \biggr]
\nonumber
\\
&+ \bbE\biggl[\int_0^t \eta^\epsilon_{s^-} \, \dd M_s^\phi \biggm| \cY \biggr] - \bbE\biggl[\int_0^t \dfrac{\epsilon \eta_s^2 \phi(s,X_{s^-})}{(1+\epsilon \eta_s)^3} \norm{\gamma^{-1}(s) h(s, X_s)}^2  \, \dd s \biggm| \cY \biggr].
\label{eq:semimartproj}
\end{align}
We analyze now each of the terms appearing in~\eqref{eq:semimartproj}. For any bounded $\cY$-measurable $Z$, thanks to conditions \eqref{eq:blip} and \eqref{eq:sigmalip}, there exists a constant $C_1$, depending on $Z$, $\epsilon$, $\phi$, $b$, and $\sigma$ such that
\begin{equation*}
|Z \eta^\epsilon_t \bigl[\partial_t + \cA\bigr] \phi(t, X_t)| \leq C_1(1+ \norm{X_t}^2), \quad t \in [0,T],
\end{equation*}
which implies, using the estimate given in \eqref{eq:Xestimate},
\begin{equation*}
\bbE\biggl[\int_0^t Z \eta^\epsilon_s \bigl[\partial_s + \cA\bigr] \phi(s, X_s) \, \dd s\biggr]
\leq C \bbE\left[\int_0^t (1+\norm{X_s}^2) \, \dd s\right] \leq C_1T[1+\kappa(1+ \bbE[\norm{X_{0^-}}^2])] < +\infty.
\end{equation*}
Therefore, applying the tower rule and Fubini-Tonelli's theorem,
\begin{equation*}
\bbE\biggl[Z \, \bbE\biggl[\int_0^t \eta^\epsilon_s \bigl[\partial_s + \cA\bigr] \phi(s, X_s) \, \dd s \biggm| \cY \biggr]\biggr]
= \bbE\biggl[Z \int_0^t \bbE[\eta^\epsilon_s \bigl[\partial_s + \cA\bigr] \phi(s, X_s) \mid \cY] \, \dd s \biggr],
\end{equation*}
%\begin{align*}
%&\mathrel{\phantom{=}}\bbE\biggl[Z \, \bbE\biggl[\int_0^t \eta^\epsilon_s \bigl[\partial_s + \cA\bigr] \phi(s, X_s) \, \dd s \biggm| \cY \biggr]\biggr]
%= \bbE\biggl[\int_0^t Z \eta^\epsilon_s \bigl[\partial_s + \cA\bigr] \phi(s, X_s) \, \dd s\biggr]
%\\
%&= \int_0^t \bbE[Z \, \bbE[\eta^\epsilon_s \bigl[\partial_s + \cA\bigr] \phi(s, X_s) \mid \cY] \, ] \, \dd s
%= \bbE\biggl[Z \int_0^t \bbE[\eta^\epsilon_s \bigl[\partial_s + \cA\bigr] \phi(s, X_s) \mid \cY] \, \dd s \biggr],
%\end{align*}
whence
\begin{equation}
\label{eq:ev1}
\bbE\biggl[\int_0^t \eta^\epsilon_s \bigl[\partial_s + \cA\bigr] \phi(s, X_s) \, \dd s \biggm| \cY \biggr] = \int_0^t \bbE[\eta^\epsilon_s \bigl[\partial_s + \cA\bigr] \phi(s, X_s) \mid \cY] \, \dd s.
\end{equation}
Similarly, for any bounded $\cY$-measurable $Z$ we have that
\begin{equation*}
\norm{Z \eta^\epsilon_t \dD_x \phi(t, X_{t^-})} \leq \dfrac{\abs{Z}C'_\phi}{\epsilon} < +\infty, \quad \dd \bbP \otimes \dd t\text{-a.e.}
\end{equation*}
This fact will allow to use Fubini-Tonelli's theorem in formula \eqref{eq:condexpectintegrnu} below.
We need to introduce the changes of time associated to the processes $\nu^{i,+}$ and $\nu^{i,-}$, $i = 1, \dots, m$, defined as
\begin{equation*}
C^{i,+}_t \coloneqq \inf\{s \geq 0 \colon \nu^{i,+}_s \geq t\}, \quad C^{i,-}_t \coloneqq \inf\{s \geq 0 \colon \nu^{i,-}_s \geq t\}, \quad t \geq 0, \quad i = 1, \dots,m,
\end{equation*}
where $\nu^{i,+}$ (resp.\ $\nu^{i,-}$) denotes the positive part (resp.\ negative part) process of the $i$-th component of process $\nu$ (see the list of notation in Section~\ref{sec:notation} for a more detailed definition). 

For each $t \geq 0$ and $i = 1, \dots, m$, $C^{i,+}_t$ and $C^{i,-}_t$ are $\bbY$-stopping times (see, e.g., \citep[Chapter~VI, Def. 56]{dellacheriemeyer:B} or \citep[Proposition~I.1.28]{jacod2013:limit}). Hence, applying the change of time formula (see, e.g., \citep[Chapter~VI, Equation~(55.1)]{dellacheriemeyer:B} or \citep[Equation~(1), p. 29]{jacod2013:limit}) and Fubini-Tonelli's theorem, we get
\begin{align}
&\mathop{\phantom{=}} \bbE\biggl[Z \, \bbE\biggl[\int_0^t\eta^\epsilon_s \dD_x \phi(s, X_{s^-}) \, \dd \nu_s \biggm| \cY \biggr]\biggr]
= \bbE\biggl[\int_0^{+\infty} \ind_{s \leq t} Z \eta^\epsilon_s \dD_x \phi(s, X_{s^-}) \, \dd \nu_s\biggr]
\nonumber
\\
&= \sum_{i=1}^m \bbE\biggl[\int_0^{+\infty} \ind_{s \leq t} Z \eta^\epsilon_s \partial_i \phi(s, X_{s^-}) \, \dd \nu^{i,+}_s\biggr] - \sum_{i=1}^m \bbE\biggl[\int_0^{+\infty} \ind_{s \leq t} Z \eta^\epsilon_s \partial_i \phi(s, X_{s^-}) \, \dd \nu^{i,-}_s\biggr] \notag
\\
&= \sum_{i=1}^m \bbE\biggl[\int_0^{+\infty} \ind_{C_s^{i,+} \leq t} Z \eta^\epsilon_{C_s^{i,+}} \partial_i \phi(C_s^{i,+}, X_{({C_s^{i,+}})^-}) \ind_{C_s^{i,+} < +\infty} \, \dd s\biggr] \notag
\\
&\qquad - \sum_{i=1}^m \bbE\biggl[\int_0^{+\infty} \ind_{C_s^{i,-} \leq t} Z \eta^\epsilon_{C_s^{i,-}} \partial_i \phi(C_s^{i,-}, X_{({C_s^{i,-}})^-}) \ind_{C_s^{i,-} < +\infty} \, \dd s\biggr] \notag
\\
&= \sum_{i=1}^m \int_0^{+\infty} \bbE\left[\ind_{C_s^{i,+} \leq t} Z \bbE\bigl[\eta^\epsilon_{C_s^{i,+}} \partial_i \phi(C_s^{i,+}, X_{({C_s^{i,+}})^-}) \bigm| \cY\bigr] \ind_{C_s^{i,+} < +\infty}\right] \, \dd s\biggr] \notag
\\
&\qquad - \sum_{i=1}^m \int_0^{+\infty} \bbE\left[\ind_{C_s^{i,-} \leq t} Z \bbE\bigl[\eta^\epsilon_{C_s^{i,-}} \partial_i \phi(C_s^{i,-}, X_{({C_s^{i,-}})^-}) \bigm| \cY\bigr] \ind_{C_s^{i,-} < +\infty}\right] \, \dd s\biggr] \notag
\\
&= \sum_{i=1}^m \bbE\biggl[\int_0^{+\infty} \! \ind_{s \leq t} Z \bbE[\eta^\epsilon_s \, \partial_i \phi(s, X_{s^-}) \mid \cY] \, \dd \nu^{i,+}_s\biggr] \!-\! \sum_{i=1}^m \bbE\biggl[\int_0^{+\infty} \! \ind_{s \leq t} Z \bbE[\eta^\epsilon_s \, \partial_i \phi(s, X_{s^-}) \mid \cY] \, \dd \nu^{i,-}_s\biggr] \notag
\\
&= \bbE\biggl[\int_0^{+\infty} \ind_{s \leq t} Z \bbE[\eta^\epsilon_s \, \dD_x \phi(s, X_{s^-}) \mid \cY] \, \dd \nu_s\biggr]
= \bbE\biggl[Z \int_0^t \bbE[\eta^\epsilon_s \, \dD_x \phi(s, X_{s^-}) \mid \cY] \, \dd \nu_s\biggr],
\label{eq:condexpectintegrnu}
\end{align}
whence
\begin{equation}
\label{eq:ev2}
\bbE\biggl[\int_0^t\eta^\epsilon_s \dD_x \phi(s, X_{s^-}) \, \dd \nu_s \biggm| \cY \biggr] = \int_0^t \bbE[\eta^\epsilon_s \, \dD_x \phi(s, X_{s^-}) \mid \cY] \, \dd \nu_s.
\end{equation}

Next, using \eqref{eq:gammainvhintegr} we obtain
\begin{equation*}
\bbE\biggl[\int_0^t \biggl(\frac{\eta^\epsilon_s}{1+\epsilon \eta_s} \phi(s,X_{s^-}) \norm{\gamma^{-1}(s) h(s, X_s)} \biggr)^2 \, \dd s \biggr] 
\leq \frac{C_\phi^2}{\epsilon^2} \, \bbE\biggl[\int_0^t \norm{\gamma^{-1}(s) h(s, X_s)}^2 \, \dd s \biggr] < +\infty,
\end{equation*}
%\begin{align*}
%&\mathop{\phantom{\leq}} \bbE\biggl[\int_0^t \biggl(\frac{\eta^\epsilon_s}{1+\epsilon \eta_s} \phi(s,X_{s^-}) \norm{\gamma^{-1}(s) h(s, X_s)} \biggr)^2 \, \dd s \biggr] 
%\\ 
%&\leq \frac{C_\phi^2}{\epsilon^2} \, \bbE\biggl[\int_0^t \frac{1}{(1+\epsilon \eta_s)^2} \norm{\gamma^{-1}(s) h(s, X_s)}^2 \, \dd s \biggr]
%\leq \frac{C_\phi^2}{\epsilon^2} \, \bbE\biggl[\int_0^t \norm{\gamma^{-1}(s) h(s, X_s)}^2 \, \dd s \biggr] < +\infty,
%\end{align*}
hence, by Lemma~\ref{lem:exchangeB} we have:
\begin{equation}
\label{eq:ev3}
\bbE\biggl[\int_0^t \dfrac{\eta_s \phi(s,X_{s^-})}{(1+\epsilon\eta_s)^2} \gamma^{-1}(s) h(s, X_s) \, \dd \overline B_s \biggm| \cY \biggr] = \int_0^t \bbE\biggl[\dfrac{\eta_s \phi(s,X_{s^-})}{(1+\epsilon\eta_s)^2} \gamma^{-1}(s) h(s, X_s) \biggm| \cY \biggr] \, \dd \overline B_s.
\end{equation}

Recalling that $\abs{\nu}^i_T \leq K$, $\bbP$-a.s., and hence $\abs{\Delta \nu^i_t} \leq K$, for all $t \in [0,T]$ and all $i=1,\dots,m$, $\bbP$-a.s., for any bounded $\cY$-measurable $Z$ we have that
\begin{align*}
&\mathop{\phantom{\leq}} \sum_{0 \leq s \leq t} \bbE\biggl| Z \eta^\epsilon_s \Bigl[\phi(s,X_{s^-} + \Delta \nu_s) - \phi(s, X_{s^-}) - \dD_x \phi(s, X_{s^-}) \, \Delta \nu_s\Bigr] \biggr|
\leq \frac{\abs{Z}}{\epsilon} \frac{C''_\phi}{2} \sum_{0 \leq s \leq t} \bbE\bigl[\norm{\Delta \nu_s}^2\bigr]
\\
&= \frac{\abs{Z}}{\epsilon} \frac{C''_\phi}{2} \bbE\biggl[\sum_{0 \leq s \leq t} \Delta \nu_s^* \Delta \nu_s\biggr]
\leq \frac{\abs{Z}}{\epsilon} \frac{C''_\phi}{2} \sum_{i=1}^m \bbE\left[\int_0^t \abs{\Delta \nu^i_s} \dd \abs{\nu^i}_s\right]
\leq \frac{\abs{Z}}{\epsilon} \frac{C''_\phi}{2} mK^2 < + \infty.
\end{align*}
%\begin{align*}
%&\mathop{\phantom{\leq}} \sum_{0 \leq s \leq t} \bbE\biggl| Z \eta^\epsilon_s \Bigl[\phi(s,X_{s^-} + \Delta \nu_s) - \phi(s, X_{s^-}) - \dD_x \phi(s, X_{s^-}) \, \Delta \nu_s\Bigr] \biggr|
%\leq \frac{\abs{Z}}{\epsilon} \frac{C''_\phi}{2} \sum_{0 \leq s \leq t} \bbE\bigl[\norm{\Delta \nu_s}^2\bigr]
%\\
%&= \frac{\abs{Z}}{\epsilon} \frac{C''_\phi}{2} \bbE\biggl[\sum_{0 \leq s \leq t} \Delta \nu_s^* \Delta \nu_s\biggr]
%\leq \frac{\abs{Z}}{\epsilon} \frac{C''_\phi}{2} \sum_{i=1}^m \bbE\left[\int_0^t \abs{\Delta \nu^i_s} \dd \abs{\nu^i}_s\right]
%\leq \frac{\abs{Z}}{\epsilon} \frac{C''_\phi}{2} K \sum_{i=1}^m \bbE[\abs{\nu^i}_T] 
%\\
%&\leq \frac{\abs{Z}}{\epsilon} \frac{C''_\phi}{2} mK^2 < + \infty.
%\end{align*}
%In fact, recalling that $\nu^i_T \leq K$, $\bbP$-a.s., and hence $\Delta \nu^i_t \leq K$, for all $t \in [0,T]$ and all $i=1,\dots,m$, $\bbP$-a.s., we get
%\begin{equation*}
%\sum_{0 \leq s \leq t} \bbE\bigl[\norm{\Delta \nu_s}^2\bigr]
%= \bbE\left[\sum_{0 \leq s \leq t} \Delta \nu_s^* \Delta \nu_s\right]
%\leq \sum_{i=1}^m \bbE\left[\int_0^t \Delta \nu^i_s \dd \nu^i_s\right]
%\leq K \sum_{i=1}^m \bbE[\nu^i_T] \leq mK^2.
%\end{equation*}
Therefore, using once more Fubini-Tonelli's theorem
\begin{align*}
&\mathrel{\phantom{=}}\bbE\biggl[Z \, \bbE\biggl[\sum_{0 \leq s \leq t} \eta^\epsilon_s \Bigl[\phi(s,X_{s^-} + \Delta \nu_s) - \phi(s, X_{s^-}) - \dD_x \phi(s, X_{s^-}) \cdot \Delta \nu_s\Bigr] \biggm| \cY \biggr]\biggr]
\\
%&= \bbE\biggl[\sum_{0 \leq s \leq t} Z \eta^\epsilon_s \Bigl[\phi(s,X_{s^-} + \Delta \nu_s) - \phi(s, X_{s^-}) - \dD_x \phi(s, X_{s^-}) \cdot \Delta \nu_s\Bigr] \biggr]
%\\
%&= \sum_{0 \leq s \leq t} \bbE\biggl[Z \, \bbE\biggl[ \eta^\epsilon_s \Bigl[\phi(s,X_{s^-} + \Delta \nu_s) - \phi(s, X_{s^-}) - \dD_x \phi(s, X_{s^-}) \cdot \Delta \nu_s\Bigr] \biggm| \cY\biggr] \, \biggr]
%\\
&= \bbE\biggl[Z \sum_{0 \leq s \leq t} \bbE\Bigl[ \eta^\epsilon_s \Bigl[\phi(s,X_{s^-} + \Delta \nu_s) - \phi(s, X_{s^-}) - \dD_x \phi(s, X_{s^-}) \cdot \Delta \nu_s\Bigr] \Bigm| \cY\Bigr] \, \biggr],
\end{align*}
and hence
\begin{align}
\label{eq:ev4}
&\mathop{\phantom{=}} \bbE\biggl[\sum_{0 \leq s \leq t} \eta^\epsilon_s \Bigl[\phi(s,X_{s^-} + \Delta \nu_s) - \phi(s, X_{s^-}) - \dD_x \phi(s, X_{s^-}) \cdot \Delta \nu_s\Bigr] \biggm| \cY \biggr] \notag
\\
&= \sum_{0 \leq s \leq t} \bbE\Bigl[ \eta^\epsilon_s \Bigl[\phi(s,X_{s^-} + \Delta \nu_s) - \phi(s, X_{s^-}) - \dD_x \phi(s, X_{s^-}) \cdot \Delta \nu_s\Bigr] \Bigm| \cY\Bigr].
\end{align}

Finally, being $\eta^\epsilon$ bounded, Lemma~\ref{lem:exchangeMphi} entails $\bbE\bigl[\int_0^t \eta^\epsilon_s \, \dd M_s^\phi \bigm| \cY \bigr] = 0$,
%\begin{equation*}
%\bbE\biggl[\int_0^t \eta^\epsilon_s \, \dd M_s^\phi \biggm| \cY \biggr] = 0,
%\end{equation*}
and, using the same rationale of the previous evaluations,
\begin{equation}
\label{eq:ev5}
\bbE\biggl[\int_0^t \dfrac{\epsilon \eta_s^2 \phi(s,X_{s^-})}{(1+\epsilon \eta_s)^3} \norm{\gamma^{-1}(s) h(s, X_s)}^2 \, \dd s \biggm| \cY \biggr]
= \int_0^t \bbE\biggl[\dfrac{\epsilon \eta_s^2 \phi(s,X_{s^-})}{(1+\epsilon \eta_s)^3} \norm{\gamma^{-1}(s) h(s, X_s)}^2 \biggm| \cY\biggr] \, \dd s.
\end{equation}

Taking into account \eqref{eq:ev1}, \eqref{eq:ev2}, \eqref{eq:ev3}, \eqref{eq:ev4}, and \eqref{eq:ev5}, Equation~\eqref{eq:semimartproj} becomes
\begin{align}
&\mathrel{\phantom{=}}\bbE[\eta^\epsilon_t \phi(t,X_t) \mid \cY ] = \frac{\xi(\phi_0)}{1+\epsilon} + \int_0^t \bbE[\eta^\epsilon_s \bigl[\partial_s + \cA\bigr] \phi(s, X_s) \mid \cY] \, \dd s
\nonumber
\\
&- \int_0^t \bbE\biggl[\dfrac{\epsilon \eta_s^2 \phi(s,X_{s})}{(1+\epsilon \eta_s)^3} \norm{\gamma^{-1}(s) h(s, X_s)}^2 \biggm| \cY\biggr] \, \dd s \notag
\\
&+ \int_0^t \bbE\biggl[\dfrac{\eta_s \phi(s,X_{s})}{(1+\epsilon\eta_s)^2} \gamma^{-1}(s) h(s, X_s) \biggm| \cY \biggr] \, \dd \overline B_s + \int_0^t \bbE[\eta^\epsilon_s \, \dD_x \phi(s, X_{s^-}) \mid \cY] \, \dd \nu_s
\nonumber
\\
&+ \sum_{0 \leq s \leq t} \bbE\Bigl[ \eta^\epsilon_s \Bigl[\phi(s,X_{s^-} + \Delta \nu_s) - \phi(s, X_{s^-}) - \dD_x \phi(s, X_{s^-}) \cdot \Delta \nu_s\Bigr] \Bigm| \cY\Bigr].
\label{eq:semimartproj2}
\end{align}

\smallskip

\noindent \textbf{Step 3. (Taking limits)} It remains to show that all the terms appearing in \eqref{eq:semimartproj2} converge appropriately to give \eqref{eq:Zakai}. As $\epsilon \to 0$, we have that $\eta^\epsilon_t \to \eta_t$, $\bbE[\eta^\epsilon_t \phi(t,X_t) \mid \cY ] \longrightarrow \rho_t(\phi)$, for all $t \in [0,T]$, and
\begin{equation*}
\bbE[\eta^\epsilon_t \bigl[\partial_t + \cA\bigr] \phi(t, X_t) \mid \cY] \longrightarrow \rho_t\bigl(\bigl[\partial_t + \cA_t\bigr] \phi_t\bigr), \quad \dd\bbP\otimes \dd t\text{-a.e.}
\end{equation*}
Using boundedness of $\phi$ and \eqref{eq:blip}, \eqref{eq:sigmalip}, we get that
\begin{equation*}
\abs{\bbE[\eta^\epsilon_t \bigl[\partial_t + \cA\bigr] \phi(t, X_t) \mid \cY]} \leq C_2 \bbE[\eta_t(1+\norm{X_t}^2) \mid \cY], \quad t \in [0,T],
\end{equation*} 
for some constant $C_2$, depending on $\phi$, $b$, and $\sigma$. The r.h.s.\ of this inequality is $\dd \bbP \otimes \dd t$ integrable on $\Omega \times [0,t]$, since (apply again the tower rule and Fubini-Tonelli's theorem)
\begin{equation*}
\bbE\left[\int_0^t C_2 \bbE[\eta_s(1+\norm{X_s}^2) \mid \cY] \, \dd s\right] 
\leq C_2 T \bigl\{1 + \kappa(1+ \widetilde \bbE[\norm{X_{0^-}}^2])\bigr\} < +\infty,
%&=C_2\left\{t + \int_0^t \widetilde \bbE[\norm{X_s}^2] \, \dd s\right\}
%\leq C_2 T \left\{1 + \widetilde \bbE[\sup_{t \in [0,T]}\norm{X_t}^2]\right\}
%\\
%&\leq C_2 T \left\{1 + \kappa(1+ \widetilde \bbE[\norm{X_{0^-}}^2])\right\} < +\infty,
\end{equation*}
%\begin{align*}
%&\mathop{\phantom{=}}\bbE\left[\int_0^t C_2 \bbE[\eta_s(1+\norm{X_s}^2) \mid \cY] \, \dd s\right] 
%= C_2 \int_0^t \bbE[\eta_s(1+\norm{X_s}^2)] \, \dd s
%\\
%& \leq C_2 T \left\{1 + \widetilde \bbE[\sup_{t \in [0,T]}\norm{X_t}^2]\right\} \leq C_2 T \left\{1 + \kappa(1+ \widetilde \bbE[\norm{X_{0^-}}^2])\right\} < +\infty,
%%&=C_2\left\{t + \int_0^t \widetilde \bbE[\norm{X_s}^2] \, \dd s\right\}
%%\leq C_2 T \left\{1 + \widetilde \bbE[\sup_{t \in [0,T]}\norm{X_t}^2]\right\}
%%\\
%%&\leq C_2 T \left\{1 + \kappa(1+ \widetilde \bbE[\norm{X_{0^-}}^2])\right\} < +\infty,
%\end{align*}
where we used \eqref{eq:Xestimate} (which holds also under $\widetilde \bbP$ because the dynamics of $X$ does not change under this measure), and the fact that $\widetilde \bbE[\norm{X_{0^-}}^2] = \bbE[\norm{X_{0^-}}^2] < +\infty$.

Using the conditional form of the dominated convergence theorem, we have that, for all $t \in [0,T]$,
\begin{equation*}
\bbE\biggl[\int_0^t \bbE[\eta^\epsilon_s \bigl[\partial_s + \cA\bigr] \phi(s, X_s) \mid \cY] \, \dd s \biggm| \cY \biggr] \longrightarrow \bbE\biggl[\int_0^t \rho_s\bigl(\bigl[\partial_s + \cA_s\bigr] \phi_s\bigr) \, \dd s \biggm| \cY \biggr], \quad \bbP\text{-a.s.},
\end{equation*}
as $\epsilon \to 0$, whence, noticing that the integrals are $\cY$-measurable random variables,
\begin{equation*}
\int_0^t \bbE[\eta^\epsilon_s \bigl[\partial_s + \cA\bigr] \phi(s, X_s) \mid \cY] \, \dd s \longrightarrow \int_0^t \rho_s\bigl(\bigl[\partial_s + \cA_s\bigr] \phi_s\bigr) \, \dd s, \quad \bbP\text{-a.s.}, \quad \forall t \in [0,T].
\end{equation*}

We consider, now, the term on the second line of \eqref{eq:semimartproj2}. We have that, for all $t \in [0,T]$,
\begin{equation*}
\bbE\biggl[\dfrac{\epsilon \eta_t^2 \phi(t,X_{t})}{(1+\epsilon \eta_t)^3} \norm{\gamma^{-1}(t) h(t, X_t)}^2 \biggm| \cY\biggr] \longrightarrow 0,
\end{equation*}
as $\epsilon \to 0$, and that
\begin{equation*}
\bbE\biggl[\dfrac{\epsilon \eta_t^2 \phi(t,X_{t})}{(1+\epsilon \eta_t)^3} \norm{\gamma^{-1}(t) h(t, X_t)}^2 \biggm| \cY\biggr] 
\leq C_\phi \bbE[\eta_t\norm{\gamma^{-1}(t) h(t, X_t)}^2 \mid \cY]. 
\end{equation*}
%\begin{align*}
%&\mathop{\phantom{=}}\bbE\biggl[\dfrac{\epsilon \eta_t^2 \phi(t,X_{t})}{(1+\epsilon \eta_t)^3} \norm{\gamma^{-1}(t) h(t, X_t)}^2 \biggm| \cY\biggr] 
%\\
%&= \bbE\biggl[\dfrac{\epsilon \eta_t}{1+\epsilon \eta_t}\dfrac{1}{(1+\epsilon \eta_t)^2} \phi(t,X_{t}) \eta_t\norm{\gamma^{-1}(t) h(t, X_t)}^2 \biggm| \cY\biggr] 
%\leq C_\phi \bbE[\eta_t\norm{\gamma^{-1}(t) h(t, X_t)}^2 \mid \cY]. 
%\end{align*}
The r.h.s.\ of the last inequality is $\dd \bbP \otimes \dd t$ integrable on $\Omega \times [0,t]$, since
\begin{equation*}
\bbE\left[\int_0^t C_\phi \bbE[\eta_s\norm{\gamma^{-1}(s) h(s, X_s)}^2 \mid \cY] \, \dd s\right]
\leq n C_\phi C_h C_\gamma T[1 + \kappa(1+ \widetilde \bbE[\norm{X_{0^-}}^2])] < +\infty,
\end{equation*}
%\begin{align*}
%&\mathop{\phantom{=}}\bbE\left[\int_0^t C_\phi \bbE[\eta_s\norm{\gamma^{-1}(s) h(s, X_s)}^2 \mid \cY] \, \dd s\right] 
%= C_\phi \int_0^t \bbE[\eta_s\norm{\gamma^{-1}(s) h(s, X_s)}^2] \, \dd s
%\\
%&=C_\phi \int_0^t \widetilde \bbE[\norm{\gamma^{-1}(s) h(s, X_s)}^2] \, \dd s
%\leq n C_\phi C_h C_\gamma T[1 + \kappa(1+ \widetilde \bbE[\norm{X_{0^-}}^2])] < +\infty,
%\end{align*}
where we used \eqref{eq:gammainvhintegr}, that holds also under $\widetilde \bbP$ (again, because the dynamics of $X$ does not change under this measure), and the fact that $\widetilde \bbE[\norm{X_{0^-}}^2] = \bbE[\norm{X_{0^-}}^2] < +\infty$.

Hence, reasoning as above, after applying the conditional form of the dominated convergence theorem we obtain that, for all $t \in [0,T]$, as $\epsilon \to 0$,
\begin{equation*}
\int_0^t \bbE\biggl[\dfrac{\epsilon \eta_s^2 \phi(s,X_{s})}{(1+\epsilon \eta_s)^3} \norm{\gamma^{-1}(s) h(s, X_s)}^2 \biggm| \cY\biggr] \, \dd s \longrightarrow 0, \quad \bbP\text{-a.s.}
\end{equation*}

Looking at the third line of \eqref{eq:semimartproj2}, the next step is to show that
\begin{equation*}
\int_0^t \bbE\biggl[\dfrac{\eta_s \phi(s,X_{s})}{(1+\epsilon\eta_s)^2} \gamma^{-1}(s) h(s, X_s) \biggm| \cY \biggr] \, \dd \overline B_s \longrightarrow \int_0^t \gamma^{-1}(s) \rho_s(\phi_s h_s) \, \dd \overline B_s, \quad \bbP\text{-a.s.}
\end{equation*}
The proof of this fact is standard (see, e.g., \citep[Theorem~3.24 and Exercise~3.25.i]{bain:fundofstochfilt} or \citep[Theorem~4.1.1]{bensoussan:stochcontrol}). It is important to notice that condition \eqref{eq:xi3} intervenes here.

Next, we examine the other integral in the third line of \eqref{eq:semimartproj2}. We have that
\begin{equation*}
\bbE[\eta^\epsilon_t \, \dD_x \phi(t, X_{t^-}) \mid \cY] \longrightarrow \rho_{t^-}\bigl(\dD_x \phi_t\bigr), \quad \bbP\text{-a.s.},
\end{equation*}
as $\epsilon \to 0$, for all $t \in [0,T]$. Notice that, for any $t \in [0,T]$,
\begin{equation*}
\norm{\bbE[\eta^\epsilon_t \, \dD_x \phi(t, X_{t^-}) \mid \cY]} \leq C'_\phi\bbE[\eta_t \mid \cY].
\end{equation*}
Since $\eta$ is non-negative, a $\bbY$-optional version of $\left\{\bbE[\ind_{t \leq T} \eta_t \mid \cY_t]\right\}_{t \geq 0}$ is given by the $\bbY$-optional projection of $\left\{\ind_{t \leq T} \eta_t\right\}_{t \geq 0}$ (see, e.g., \citep[Corollary~7.6.8]{cohen:stochcalculus}). Therefore, applying \citep[Chapter~VI, Theorem~57]{dellacheriemeyer:B}  and using Lemma~\ref{lem:condexp} we get that for all $t \in [0,T]$, and all $i = 1, \dots, m$,
\begin{equation*}
\bbE\biggl[\int_0^T C'_\phi \bbE[\eta_t \mid \cY] \, \dd \abs{\nu^i}_t\biggr] 
= C'_\phi \bbE\biggl[\int_0^{+\infty} \bbE[\ind_{t \leq T} \eta_t \mid \cY_t] \, \dd \abs{\nu^i}_t\biggr] 
= C'_\phi \bbE\biggl[\int_0^T \eta_t \, \dd \abs{\nu^i}_t\biggr] < +\infty,
\end{equation*}
%\begin{align*}
%&\mathop{\phantom{=}}\bbE\biggl[\int_0^T C'_\phi \bbE[\eta_t \mid \cY] \, \dd \abs{\nu^i}_t\biggr] 
%= C'_\phi \bbE\biggl[\int_0^{+\infty} \bbE[\ind_{t \leq T} \eta_t \mid \cY_t] \, \dd \abs{\nu^i}_t\biggr] 
%\\
%&= C'_\phi \bbE\biggl[\int_0^{+\infty} \ind_{t \leq T} \eta_t \, \dd \abs{\nu^i}_t\biggr] 
%= C'_\phi \bbE\biggl[\int_0^T \eta_t \, \dd \abs{\nu^i}_t\biggr] < +\infty,
%\end{align*}
where finiteness of $\bbE[\int_0^T \eta_t \, \dd \abs{\nu^i}_t]$ can be established with a reasoning analogous to the proof of \eqref{eq:etanuest2}.
Therefore, we can apply the conditional form of the dominated convergence theorem, to obtain that, for all $t \in [0,T]$, as $\epsilon \to 0$,
\begin{equation*}
\bbE\biggl[\int_0^t \bbE[\eta^\epsilon_s \, \dD_x \phi(s, X_{s^-}) \mid \cY] \, \dd \nu_s \biggm| \cY \biggr] \longrightarrow \bbE\biggl[\int_0^t \rho_{s^-}\bigl(\dD_x \phi_s\bigr) \, \dd \nu_s \biggm| \cY \biggr], \quad \bbP\text{-a.s.}
\end{equation*}
Since the integrals are $\cY$-measurable random variables, this implies that, for all $t \in [0,T]$, as $\epsilon \to 0$,
\begin{equation*}
\int_0^t \bbE[\eta^\epsilon_s \, \dD_x \phi(s, X_{s^-}) \mid \cY] \, \dd \nu_s \longrightarrow \int_0^t \rho_{s^-}\bigl(\dD_x \phi_s\bigr) \, \dd \nu_s, \quad \bbP\text{-a.s.}
\end{equation*}

Finally, looking at the fourth line of \eqref{eq:semimartproj2}, we have that, for all $t \in [0,T]$, as $\epsilon \to 0$,
\begin{multline*}
\Lambda^\epsilon_t \coloneqq \bbE\Bigl[ \eta^\epsilon_t \Bigl[\phi(t,X_{t^-} + \Delta \nu_t) - \phi(t, X_{t^-}) - \dD_x \phi(t, X_{t^-}) \cdot \Delta \nu_t\Bigr] \Bigm| \cY\Bigr] 
\\
\longrightarrow \rho_{t^-}\bigl(\phi_t(\cdot + \Delta \nu_t) - \phi_t - \dD_x \phi_t \cdot \Delta \nu_t\bigr), \quad \bbP\text{-a.s.}
\end{multline*}
Observe that, for any $t \in [0,T]$, $\Lambda^\epsilon_t$ is bounded by $\frac 12 C''_\phi \bbE[\eta_t \norm{\Delta \nu_t}^2 \mid \cY]$, which is positive and integrable with respect to the product of measure $\bbP$ and the jump measure associated to $\nu$, since:
\begin{align*}
&\mathop{\phantom{=}}\bbE\biggl[\sum_{0 \leq s \leq t} \frac 12 C''_\phi \bbE[\eta_s \norm{\Delta \nu_s}^2 \mid \cY] \biggr]
= \frac 12 C''_\phi \sum_{0 \leq s \leq t} \bbE[\eta_s \norm{\Delta \nu_s}^2] 
=\frac 12 C''_\phi \bbE\left[\sum_{0 \leq s \leq t} \eta_s \Delta \nu_s \cdot \Delta \nu_s\right]
\\
&\leq \frac 12 C''_\phi \sum_{i=1}^m \bbE\left[\int_0^t \eta_s \abs{\Delta \nu^i_s} \dd \abs{\nu^i}_s\right]
\leq \frac 12 C''_\phi K \sum_{i=1}^m \bbE\left[\int_0^t \eta_s \dd \abs{\nu^i}_s\right]
< +\infty.
\end{align*}
By the conditional form of the dominated convergence theorem, we have that, for all $t \in [0,T]$, as $\epsilon \to 0$,
\begin{equation*}
\bbE\Bigl[\sum_{0 \leq s \leq t} \Lambda^\epsilon_s \Bigm| \cY \Bigr] \longrightarrow \bbE\Bigl[\sum_{0 \leq s \leq t} \rho_{s^-}\bigl(\phi_s(\cdot + \Delta \nu_s) - \phi_s - \dD_x \phi_s \cdot \Delta \nu_s\bigr) \Bigm| \cY \Bigr], \quad \bbP\text{-a.s.}
\end{equation*}
and since the sums are $\cY$-measurable random variables, this implies that, for all $t \in [0,T]$, as $\epsilon \to 0$,
\begin{equation*}
\sum_{0 \leq s \leq t} \Lambda^\epsilon_s \longrightarrow \sum_{0 \leq s \leq t} \rho_{s^-}\bigl(\phi_s(\cdot + \Delta \nu_s) - \phi_s - \dD_x \phi_s \cdot \Delta \nu_s\bigr), \quad \bbP\text{-a.s.} \qedhere
\end{equation*}
\end{proof}

\begin{remark}\label{rem:ZakaiSPDE}
If the jump times of the process $\nu$ do not accumulate over $[0,T]$, then the Zakai equation can be split into successive linear SPDEs between the jumps of $\nu$ (i.e., of $X$). Set $T_0 = 0$, denote by $(T_n)_{n \in \N}$ the sequence of jump times of $\nu$ and indicate by $\nu^c$ the continuous part of $\nu$. Then, for any $\phi \in \dC^{1,2}_b([0,T] \times \R^m)$ and any $n \in \N_0$ we have $\bbP$-a.s.
\begin{equation}\label{eq:ZakaiSPDE}
\left\{
\begin{aligned}
&\dd \rho_t(\phi_t) \!=\! \rho_t\bigl(\bigl[\partial_t \!+\! \cA_t\bigr]\! \phi_t\bigr) \dd t + \rho_{t^-}\!\bigl(\dD_x \phi_t\bigr) \dd \nu^c_t 
+ \gamma^{-1}\!(t) \rho_t(\phi_t h_t) \dd \overline B_t, & & t \!\in\! [T_n \!\land\! T, T_{n+1} \!\land\! T),
\\
&\rho_{0^-}(\phi_0) = \xi(\phi_0),
\\
&\rho_{T_n}(\phi) = \rho_{{T_n}^-}\bigl(\phi_{T_n}(\cdot + \Delta \nu_{T_n})\bigr).
\end{aligned}
\right.
\end{equation}
%\begin{equation}\label{eq:ZakaiSPDE}
%\left\{
%\begin{aligned}
%&\dd \rho_t(\phi_t) = \rho_t\bigl(\bigl[\partial_t + \cA_t\bigr] \phi_t\bigr) \, \dd t + \rho_{t^-}\bigl(\dD_x \phi_t\bigr) \, \dd \nu^c_t 
%\\
%&\qquad \qquad \quad + \gamma^{-1}(t) \rho_t(\phi_t h_t) \, \dd \overline B_t, & & \quad t \in [T_n \land T, T_{n+1} \land T),
%\\
%&\rho_{0^-}(\phi_0) = \xi(\phi_0),
%\\
%&\rho_{T_n}(\phi) = \rho_{{T_n}^-}\bigl(\phi_{T_n}(\cdot + \Delta \nu_{T_n})\bigr).
%\end{aligned}
%\right.
%\end{equation}
\end{remark}

We are now ready to deduce, from the Zakai equation, the \emph{Kushner-Stratonovich} equation, i.e., the equation satisfied by the filtering process $\pi$, defined in~\eqref{eq:filter}.
The proof of the following two results follows essentially the same steps of~\citep[Lemma~3.29 and Theorem~3.30]{bain:fundofstochfilt}, up to necessary modifications due to the present setting (see, also, \citep[Lemma~4.3.1 and Theorem~4.3.1]{bensoussan:stochcontrol}).

\begin{lemma}\label{lem:rho1}
Under the same assumptions of Theorem~\ref{th:Zakai}, the process $\bigl(\rho_t(\one)\bigr)_{t \in [0,T]}$ satisfies for all $t \in [0,T]$
\begin{equation*}
\rho_t(\one) = 
\exp\left\{\int_0^t \gamma^{-1}(s) \pi_s(h_s) \, \dd \overline B_s - \dfrac 12 \int_0^t \norm{\gamma^{-1}(s) \pi_s(h_s)}^2 \, \dd s\right\}, \quad \bbP\text{-a.s.}
\end{equation*}
\end{lemma}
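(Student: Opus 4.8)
The plan is to specialize the Zakai equation \eqref{eq:Zakai} to the constant test function $\phi \equiv \one$, which belongs to $\dC_b^{1,2}([0,T] \times \R^m)$ and satisfies $\partial_t \phi = 0$, $\dD_x \phi = 0$, and $\dD_x^2 \phi = 0$. Consequently $\bigl[\partial_s + \cA_s\bigr]\phi_s = 0$, the bounded-variation integral $\int_0^t \rho_{s^-}(\dD_x \phi_s)\, \dd \nu_s$ vanishes, and each summand of the jump term reduces to $\rho_{s^-}\bigl(\one(\cdot + \Delta\nu_s) - \one - 0\bigr) = \rho_{s^-}(0) = 0$. Since $\xi(\phi_0) = \xi(\one) = 1$, the Zakai equation collapses to
\begin{equation*}
\rho_t(\one) = 1 + \int_0^t \gamma^{-1}(s) \rho_s(h_s)\, \dd \overline B_s, \quad \bbP\text{-a.s.}, \quad t \in [0,T].
\end{equation*}

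Next I would rewrite the integrand by means of the link \eqref{eq:rhopilink} between $\rho$ and $\pi$: applied componentwise to $h^j$ it gives $\rho_s(h_s) = \rho_s(\one)\,\pi_s(h_s)$. Substituting yields the linear (Dol\'eans-Dade type) stochastic differential equation
\begin{equation*}
\dd \rho_t(\one) = \rho_t(\one)\, \gamma^{-1}(t)\, \pi_t(h_t)\, \dd \overline B_t, \qquad \rho_0(\one) = 1.
\end{equation*}
Setting $N_t \coloneqq \int_0^t \gamma^{-1}(s)\pi_s(h_s)\, \dd \overline B_s$, a continuous local martingale, this is precisely the equation $\dd U_t = U_t\, \dd N_t$ with $U_0 = 1$, whose unique solution is the stochastic exponential $\mathcal{E}(N)_t = \exp\{N_t - \tfrac12 \langle N \rangle_t\}$. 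Since $\langle N\rangle_t = \int_0^t \norm{\gamma^{-1}(s)\pi_s(h_s)}^2\, \dd s$, identifying $\rho_t(\one)$ with $\mathcal{E}(N)_t$ produces the asserted formula. Equivalently, one may verify directly by It\^o's formula that the right-hand side of the claimed identity solves the displayed linear SDE, and then conclude by uniqueness; I note that the resulting expression is structurally the same as $\eta$ in \eqref{eq:eta}, with $h(s,X_s)$ replaced by its filtered estimate $\pi_s(h_s)$.

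The only real work lies in the integrability bookkeeping, since $h$ has merely linear growth (cf.\ \eqref{eq:hlin}) rather than being bounded. First, one must confirm that $\rho_s(h_s)$ is well defined and that the above stochastic integral makes sense; this follows from \eqref{eq:hlin}, the moment bound \eqref{eq:Xestimate}, and the integrability of $\eta$ already exploited in the proof of Theorem~\ref{th:Zakai}. Second, extending \eqref{eq:rhopilink} from bounded $\phi$ to the unbounded $h$ requires a routine truncation argument: apply the identity to $(-k) \lor h^j \land k$ and let $k \to \infty$, with dominated convergence justified by the same moment estimates. Finally, one checks $\langle N \rangle_T < +\infty$ $\bbP$-a.s., so that $\mathcal{E}(N)$ is genuinely well defined, again on the basis of the linear growth of $h$ and the moment bounds. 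I expect this integrability verification, rather than the algebraic identification with the stochastic exponential, to be the main obstacle.
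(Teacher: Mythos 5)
Your proposal is correct and takes essentially the same route as the paper, whose proof of this lemma defers to \citep[Lemma~3.29]{bain:fundofstochfilt} and \citep[Lemma~4.3.1]{bensoussan:stochcontrol}: specialize the Zakai equation to $\phi \equiv \one$ (so that the drift, bounded-variation and jump terms all vanish), use the Kallianpur--Striebel link to write $\rho_s(h_s)=\rho_s(\one)\,\pi_s(h_s)$, and identify $\rho(\one)$ as the Dol\'eans-Dade exponential of $\int_0^\cdot \gamma^{-1}(s)\pi_s(h_s)\,\dd \overline B_s$. The only implementation difference is that the cited proofs (and the paper's own analogous computation inside Lemma~\ref{lem:tousegronwall}) derive the exponential formula by applying It\^o's formula to a $\delta$-regularized logarithm --- a device needed there because positivity of a generic solution's total mass is not known a priori --- whereas for $\rho$ itself one has $\rho_t(\one)=\bbE[\eta_t \mid \cY_t]>0$, so your appeal to uniqueness of solutions of the linear SDE is an equally valid shortcut, with the integrability checks you flag being exactly the ``necessary modifications'' the paper alludes to.
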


\begin{theorem}\label{th:KS}
Under the same assumptions of Theorem~\ref{th:Zakai}, the process $\bigl(\pi_t(\phi)\bigr)_{t \in [0,T]}$ satisfies for all $t \in [0,T]$ and all $\phi \in \dC^{1,2}_b([0,T] \times \R^m)$ the Kushner-Stratonovich equation
\begin{align}\label{eq:KS}
\pi_t(\phi_t) &= \pi_{0^-}(\phi_0) + \int_0^t \pi_s\bigl(\bigl[\partial_s + \cA_s\bigr] \phi_s\bigr) \, \dd s + \int_0^t \pi_{s^-}\bigl(\dD_x \phi_s\bigr) \, \dd \nu_s \notag
\\
&+ \int_0^t \gamma^{-1}(s) \Bigl\{\pi_s\bigl(\phi_s h_s\bigr) - \pi_s(\phi_s) \pi_s(h_s)\Bigr\} \, \bigl[\dd \overline B_s - \gamma^{-1}(s)\pi_s(h_s) \, \dd s\bigr] \notag
\\
&+\sum_{0 \leq s \leq t} \Bigl[\pi_{s^-}\bigl(\phi_s(\cdot + \Delta \nu_s) - \phi_s - \dD_x \phi_s \cdot \Delta \nu_s\bigr)\Bigr], \quad \bbP\text{-a.s.}
\end{align}
\end{theorem}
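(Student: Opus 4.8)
The plan is to derive the Kushner-Stratonovich equation \eqref{eq:KS} from the Zakai equation \eqref{eq:Zakai} together with Lemma~\ref{lem:rho1}, by applying It\^o's product rule to the ratio $\pi_t(\phi_t) = \rho_t(\phi_t)/\rho_t(\one)$ provided by \eqref{eq:rhopilink}. First I would record the dynamics of the normalizing process $D_t \coloneqq \rho_t(\one)$. Taking $\phi = \one$ in \eqref{eq:Zakai} makes $[\partial_t + \cA_t]\one$, $\dD_x \one$, and the jump correction all vanish, so that $D$ is continuous, strictly positive, and solves the linear SDE $\dd D_t = D_t\, \gamma^{-1}(t)\pi_t(h_t) \, \dd \overline B_t$ with $D_0 = 1$; this is exactly the Dol\'eans-Dade exponential identified in Lemma~\ref{lem:rho1}. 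Applying It\^o's formula to $Z_t \coloneqq 1/D_t$ then yields
\[
\dd Z_t = -Z_t\, \gamma^{-1}(t)\pi_t(h_t)\cdot \dd \overline B_t + Z_t\, \norm{\gamma^{-1}(t)\pi_t(h_t)}^2 \, \dd t .
\]

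Next, since $Z$ is continuous, I would write $\pi_t(\phi_t) = \rho_t(\phi_t)\, Z_t$ and use integration by parts,
\[
\dd \pi_t(\phi_t) = Z_t \, \dd \rho_t(\phi_t) + \rho_{t^-}(\phi_t)\, \dd Z_t + \dd [\rho(\phi), Z]_t .
\]
Continuity of $Z$ implies that the covariation only sees the continuous martingale part of $\rho(\phi)$, which by \eqref{eq:Zakai} is $\int_0^\cdot \gamma^{-1}(s)\rho_s(\phi_s h_s)\cdot \dd \overline B_s$; hence $\dd [\rho(\phi), Z]_t = -\bigl(\gamma^{-1}(t)\pi_t(\phi_t h_t)\bigr)\cdot\bigl(\gamma^{-1}(t)\pi_t(h_t)\bigr)\, \dd t$, after using $Z_t \rho_t(\phi_t h_t) = \pi_t(\phi_t h_t)$. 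I would then substitute the full Zakai expression for $\dd \rho_t(\phi_t)$, and, exploiting that $D = \rho(\one)$ is continuous (so $Z_t = Z_{t^-}$ and $\rho_t(\one) = \rho_{t^-}(\one)$), convert every term through the identities $Z_t\rho_t(\psi_t) = \pi_t(\psi_t)$ and $Z_t\rho_{t^-}(\psi_t) = \pi_{t^-}(\psi_t)$.

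Collecting terms, the contribution $Z_t\, \dd \rho_t(\phi_t)$ produces directly $\pi_t([\partial_t+\cA_t]\phi_t)\,\dd t$, the finite-variation term $\pi_{t^-}(\dD_x\phi_t)\,\dd \nu_t$, the martingale term $\gamma^{-1}(t)\pi_t(\phi_t h_t)\cdot \dd\overline B_t$, and the jump sum $\pi_{s^-}\bigl(\phi_s(\cdot+\Delta\nu_s)-\phi_s-\dD_x\phi_s\cdot\Delta\nu_s\bigr)$. The remaining pieces $\rho_{t^-}(\phi_t)\,\dd Z_t$ and $\dd[\rho(\phi),Z]_t$ supply the drift $\pi_t(\phi_t)\norm{\gamma^{-1}(t)\pi_t(h_t)}^2 - \bigl(\gamma^{-1}(t)\pi_t(\phi_t h_t)\bigr)\cdot\bigl(\gamma^{-1}(t)\pi_t(h_t)\bigr)$ together with the martingale correction $-\pi_t(\phi_t)\,\gamma^{-1}(t)\pi_t(h_t)\cdot\dd\overline B_t$. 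A short algebraic rearrangement shows these combine into the single innovation expression $\int_0^t \gamma^{-1}(s)\{\pi_s(\phi_s h_s)-\pi_s(\phi_s)\pi_s(h_s)\}\cdot[\dd\overline B_s-\gamma^{-1}(s)\pi_s(h_s)\,\dd s]$, which is precisely \eqref{eq:KS}; the initial datum reads $\pi_{0^-}(\phi_0)=\xi(\phi_0)$ because $\rho_{0^-}(\one)=1$. Here Lemma~\ref{lem:rho1} is essential, as it is what produces the correct drift compensation in the innovation term.

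The main obstacle is the careful bookkeeping of the left limits and the jump terms. The structural fact that makes the computation clean is that $\rho(\one)$, equivalently $Z$, is \emph{continuous}: this guarantees that $[\rho(\phi),Z]$ carries no jump part and that the Zakai jump sum transfers unchanged (up to the factor $Z_s=Z_{s^-}$) into the Kushner-Stratonovich jump sum, while the distinction between $\pi_s$ and $\pi_{s^-}$ is immaterial in the $\dd s$ and $\dd\overline B_s$ integrals. One must also verify that all stochastic integrals are genuine (local) martingales and that the $\dd\nu$ and $\dd s$ integrals are well defined; these integrability facts follow from the boundedness of $\phi$ and its derivatives, the linear growth \eqref{eq:hlin}, the finite-variation bound \eqref{eq:nufinitefuel}, and the moment estimate \eqref{eq:Xestimate}, exactly as in the proof of Theorem~\ref{th:Zakai}.
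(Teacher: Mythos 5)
Your proposal is correct and takes essentially the same route as the paper: the paper's proof of Theorem~\ref{th:KS} consists precisely in following \citep[Lemma~3.29 and Theorem~3.30]{bain:fundofstochfilt} (cf.\ also \citep[Theorem~4.3.1]{bensoussan:stochcontrol}), i.e.\ writing $\pi_t(\phi_t)=\rho_t(\phi_t)/\rho_t(\one)$ via \eqref{eq:rhopilink}, identifying $\rho(\one)$ through Lemma~\ref{lem:rho1} as a continuous, strictly positive Dol\'eans-Dade exponential, and applying It\^o's product rule to the ratio. Your bookkeeping of the left limits, of the covariation term, and of the jump sum (all hinging on the continuity of $\rho(\one)$) matches the intended ``necessary modifications due to the present setting'' mentioned in the paper.
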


\begin{remark}
It is not difficult to show (see, e.g., \citep[Proposition~2.30]{bain:fundofstochfilt} or \citep[Theorem~4.3.4]{bensoussan:stochcontrol}), that
\begin{equation*}
I_t \coloneqq \overline B_t - \gamma^{-1}(t)\pi_t(h_t), \quad t \in [0,T],
\end{equation*}
is a $(\widetilde \bbP, \bbY)$-Brownian motion, the so-called \emph{innovation process}. This allows to rewrite the Kushner-Stratonovich equation in the (perhaps more familiar) form
\begin{align*}
\pi_t(\phi_t) &= \xi(\phi_0) + \int_0^t \pi_s\bigl(\bigl[\partial_s + \cA_s\bigr] \phi_s\bigr) \, \dd s + \int_0^t \pi_{s^-}\bigl(\dD_x \phi_s\bigr) \, \dd \nu_s \notag
\\
&+ \int_0^t \gamma^{-1}(s) \Bigl\{\pi_s\bigl(\phi_s h_s\bigr) - \pi_s(\phi_s) \pi_s(h_s)\Bigr\} \, \dd I_s \notag
\\
&+\sum_{0 \leq s \leq t} \Bigl[\pi_{s^-}\bigl(\phi_s(\cdot + \Delta \nu_s) - \phi_s - \dD_x \phi_s \cdot \Delta \nu_s\bigr)\Bigr], \quad \widetilde\bbP\text{-a.s.}, \quad t \in [0,T].
\end{align*}
Notice, however, that in this setting the innovation process is not a Brownian motion given \mbox{\emph{a priori}}, because it depends (through the density process $\eta$, and hence through $X$), on the initial law $\xi$ of the signal process and on process $\nu$. 
\end{remark}

\begin{remark}\label{rem:KSSPDE}
Similarly to what stated in Remark~\ref{rem:ZakaiSPDE}, if the jump times of the process $\nu$ do not accumulate over $[0,T]$, then the Kushner-Stratonovich equation can be split into successive nonlinear SPDEs between the jumps of $\nu$ (i.e., of $X$). Using the same notation of the aforementioned Remark, for any $\phi \in \dC^{1,2}_b([0,T] \times \R^m)$ and any $n \in \N_0$ we have $\bbP$-a.s.
\begin{equation}\label{eq:KSSPDE}
\left\{
\begin{aligned}
&\dd \pi_t(\phi_t) = \pi_t\bigl(\bigl[\partial_t + \cA_t\bigr] \phi_t\bigr) \, \dd t + \pi_{t^-}\bigl(\dD_x \phi_t\bigr) \, \dd \nu^c_t 
\\
&\quad + \gamma^{-1}(t) \Bigl\{\pi_t\bigl(\phi_t h_t\bigr) \!- \pi_t(\phi_t) \pi_t(h_t)\Bigr\} \bigl[\dd \overline B_t - \gamma^{-1}(t)\pi_t(h_t) \dd t\bigr], & &t \in [T_n \!\land\! T, T_{n+1} \!\land\! T),
\\
&\pi_{0^-}(\phi_0) = \xi(\phi_0),
\\
&\pi_{T_n}(\phi) = \pi_{{T_n}^-}\bigl(\phi_{T_n}(\cdot + \Delta \nu_{T_n})\bigr).
\end{aligned}
\right.
\end{equation}
\end{remark}

%%%%%%%%%%%%%%%%%%%%%%%%%%%%%%%%%%%%%

\section{Uniqueness of the solution to the Zakai equation}
\label{sec:uniqueness}

In this section we will address the issue of uniqueness of the solution to the Zakai equation \eqref{eq:Zakai}, under the requirement that the jump times of the process $\nu$ do not accumulate over $[0,T]$.
Proving uniqueness is essential to characterize completely the unnormalized filtering process $\rho$, defined in \eqref{eq:rho}, and is crucial in applications, e.g., in optimal control. Indeed, having ensured that \eqref{eq:Zakai} (or, equivalently, \eqref{eq:KS}) uniquely characterizes the conditional distribution of the signal given the observation, the filtering process can be employed as a state variable to solve the related separated optimal control problem (cf. \citep{bensoussan:stochcontrol}).

We follow the approach in \citep{kurtzxiong1999:SPDEs} (see, also, \citep[Chapter~7]{bain:fundofstochfilt} and \citep[Chapter~6]{xiong:intrtostochfiltth}).
The idea is to recast the measure-valued Zakai equation into an SPDE in the Hilbert space $H \coloneqq \dL^2(\R^m)$ and, therefore, to look for a density of $\rho$ in this space. To accomplish that, we will smooth solutions to \eqref{eq:Zakai} using the heat kernel, and we will then use estimates in $\dL^2(\R^m)$ in order to deduce the desired result. An important role in the subsequent analysis is played by the following lemma, whose proof can be found, e.g., in \citep[Solution to Exercise~7.2]{bain:fundofstochfilt}.

\begin{lemma}
\label{lem:density}
Let $\{\phi_k\}_{k \in \N}$ be an orthonormal basis of $H$ such that $\phi_k \in \dC_b(\R^m)$ for any $k \in \N$, and let $\mu \in \cM(\R^m)$ be a finite measure. If
\begin{equation*}
\sum_{k \in \N} [\mu(\phi_k)]^2 < +\infty,
\end{equation*}
then $\mu$ is absolutely continuous with respect to Lebesgue measure on $\R^m$ and its density is square-integrable.
\end{lemma}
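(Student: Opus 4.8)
The plan is to construct the density by a Riesz--Fischer argument and then to identify it with $\mu$ by duality; the identification is the genuine difficulty. Write $c_k \coloneqq \mu(\phi_k)$. By hypothesis $(c_k)_{k\in\N}\in\ell^2$, so the Riesz--Fischer theorem guarantees that the series $g\coloneqq\sum_{k\in\N}c_k\phi_k$ converges in $H=\dL^2(\R^m)$ to a square-integrable function, which is the natural candidate for the density. By construction $\langle g,\phi_k\rangle=c_k=\mu(\phi_k)$ for every $k$, and $\norm{g}_H^2=\sum_k c_k^2$.

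Next I would establish a duality bound on the span. Let $V\coloneqq\mathrm{span}\{\phi_k:k\in\N\}$, a dense subspace of $H$. For $f=\sum_{k=1}^N a_k\phi_k\in V$ one has, by linearity, $\int_{\R^m}f\,\dd\mu=\sum_{k=1}^N a_k c_k$, whence by Cauchy--Schwarz and orthonormality
\[ \Bigl|\int_{\R^m} f\,\dd\mu\Bigr|\leq\Bigl(\sum_{k=1}^N a_k^2\Bigr)^{1/2}\Bigl(\sum_{k\in\N}c_k^2\Bigr)^{1/2}=\norm{g}_H\,\norm{f}_H. \]
Thus $f\mapsto\int f\,\dd\mu$ is $\norm{\cdot}_H$-bounded on $V$; since $V$ is dense it extends uniquely to a bounded functional on $H$, which by the Riesz representation theorem is $f\mapsto\langle f,g\rangle$. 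In particular $\int_{\R^m} f\,\dd\mu=\int_{\R^m} fg\,\dd x$ for all $f\in V$.

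It remains to upgrade this identity to all $f\in\dC_c(\R^m)$: once that is done, $\mu$ and $g\,\dd x$ are two Radon measures agreeing on $\dC_c(\R^m)$, hence coincide by the uniqueness part of the Riesz--Markov theorem, which yields absolute continuity and square-integrability of the density (finiteness of $\mu$ then also forces $g\in\dL^1$). \textbf{This is the main obstacle.} For $f\in\dC_c(\R^m)$ the orthonormal truncations $P_Nf=\sum_{k=1}^N\langle f,\phi_k\rangle\phi_k$ converge to $f$ only in $H$, and $H$-convergence does \emph{not} control $\int P_Nf\,\dd\mu$ against the possibly singular measure $\mu$; equivalently, one must rule out that $\mu$ has a part singular with respect to Lebesgue measure, and the purely $\dL^2$-level information obtained so far cannot detect such a part. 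The point where this is resolved is exactly where the structure of the basis must enter: if the span $V$ is dense not merely in $H$ but in a test-function space that \emph{determines} finite measures---e.g.\ dense in the Schwartz space $\cS(\R^m)$ in its own topology, as holds for Hermite-type bases---then for $f\in\cS(\R^m)$ one can choose $f_N\in V$ with $f_N\to f$ \emph{both} uniformly and in $H$. Uniform convergence (with $\mu$ finite) gives $\int f_N\,\dd\mu\to\int f\,\dd\mu$, while $H$-convergence (with $g\in H$) gives $\int f_N g\,\dd x\to\int fg\,\dd x$; the two limits agree, and the identity extends to $\cS(\R^m)\supset\dC_c^\infty(\R^m)$, whence $\mu=g\,\dd x$.

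An alternative route to the same identification is to smooth $\mu$ by the heat kernel, in the spirit of the $\dL^2$-estimates used later for the Zakai equation: set $g^{(t)}\coloneqq\mu*p_t$ with $p_t$ Gaussian, note that $g^{(t)}\,\dd x\to\mu$ weakly-$*$ in $\cM(\R^m)$ as $t\downarrow0$ (since $f*p_t\to f$ uniformly for $f\in\dC_c$), and extract a weak $\dL^2$-limit of $g^{(t_j)}$ along some $t_j\downarrow0$. The crux there is the uniform bound $\sup_{t>0}\norm{g^{(t)}}_H<+\infty$, into which the hypothesis $\sum_k[\mu(\phi_k)]^2<+\infty$ must be fed (via $\langle g^{(t)},\phi_k\rangle=\mu(p_t*\phi_k)$ and contractivity of the heat semigroup on $H$); I expect this uniform estimate, being essentially equivalent to the desired conclusion, to be the hard kernel of the argument regardless of which of the two routes is taken.
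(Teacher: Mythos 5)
Your first two steps (the Riesz--Fischer construction of $g$ and the identity $\mu(f)=\langle f,g\rangle$ for $f$ in the linear span of the $\phi_k$) are correct, and the obstacle you isolate is not a defect of your argument but of the statement itself: as written, for an \emph{arbitrary} orthonormal basis of $\dC_b$ functions, the lemma is false, so the extension step you declined to make is genuinely impossible. Concretely, $\dC_c^\infty(\R^m\setminus\{0\})$ is dense in $H$ (a single point is Lebesgue-null), so Gram--Schmidt applied to a countable dense family inside it produces an orthonormal basis $\{\phi_k\}$ of $H$ whose elements are smooth, bounded, and all vanish at the origin. Taking $\mu=\delta_0$ gives $\sum_k[\mu(\phi_k)]^2=0<+\infty$ while $\delta_0$ is singular. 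Note also that $\norm{T_\epsilon\delta_0}_H^2=(4\pi\epsilon)^{-m/2}\to+\infty$, which confirms your closing remark: the uniform heat-kernel bound needed by your second route cannot be deduced from the coefficient hypothesis alone. The paper offers no proof of its own here (it defers to the solution of Exercise~7.2 in Bain and Crisan); since the statement cannot be true at this level of generality, whatever argument is given there must break precisely at the passage you identified, namely going from $\dL^2$-density of the span to integration against a possibly singular $\mu$.

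The constructive part of your proposal is therefore the right way to read the lemma: it becomes true, and your first route proves it, once the basis is assumed to have the additional property that its span approximates test functions uniformly (equivalently, is dense in a space that determines finite measures), as holds for the Hermite basis, whose span is dense in $\cS(\R^m)$; alternatively one can restate the hypothesis as holding for \emph{every} orthonormal basis with entries in $\dC_b(\R^m)$. Either repair costs the paper nothing. In the sole application of the lemma (Proposition~\ref{prop:densitynucont}), the bound $\bbE[\sum_k\zeta_t(\phi_k)^2]\leq\sup_{\epsilon>0}\bbE[\norm{T_\epsilon\zeta_t}_H^2]<+\infty$ is deduced from a basis-independent quantity, so the coefficient hypothesis there holds for every admissible basis and one may simply fix the Hermite one and run your Schwartz-space argument. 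Moreover, that same uniform bound on $\norm{T_\epsilon\zeta_t}_H$ is exactly the ``hard kernel'' your heat-kernel route requires, and with it your weak-compactness argument closes with no assumption on the basis at all.
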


Let $\psi_\epsilon$ be the heat kernel, i.e., the function defined for each $\epsilon > 0$ as
\begin{equation*}
\psi_\epsilon(x) \coloneqq \frac{1}{(2\pi\epsilon)^{m/2}} \de^{-\frac{\norm{x}^2}{2\epsilon}}, \quad x \in \R^m,
\end{equation*}
and for any Borel-measurable and bounded $f$ and $\epsilon > 0$ define the operator
\begin{equation*}
T_\epsilon f(x) \coloneqq \int_{\R^m} \psi_\epsilon(x-y) \, f(y) \, \dd y, \qquad x \in \R^m.
\end{equation*}
We also define the operator $T_\epsilon \colon \cM(\R^m) \to \cM(R^m)$ given by
\begin{equation*}
T_\epsilon \mu(f) \coloneqq \mu(T_\epsilon f) = \int_{\R^m} f(y) \underbrace{\int_{\R^m} \psi_\epsilon(x-y) \, \mu(\dd x)}_{\coloneqq T_\epsilon \mu(y)}  \dd y = \int_{\R^m} f(y) \, T_\epsilon \mu(y) \, \dd y.
\end{equation*}
The equalities above imply that for any $\mu \in \cM(\R^m)$ the measure $T_\epsilon \mu$ always possesses a density with respect to Lebesgue measure, that we will still denote by $T_\epsilon \mu$. 

\begin{remark}\label{rem:Tepsreg}
It is important to notice that, by \citep[Exercise~7.3, point~ii.]{bain:fundofstochfilt}, $T_\epsilon \mu \in W^2_k(\R^m)$, for any $\mu \in \cM(\R^m)$, $\epsilon > 0$, and $k \in \N$.
\end{remark}

Further properties of these operators that will be used in the sequel are listed in the following Lemma (for its proof see, e.g., \citep[Solution to Exercise~7.3]{bain:fundofstochfilt} and \citep[Lemma~6.7, Lemma~6.8]{xiong:intrtostochfiltth}).
\begin{lemma}\label{lem:Tprop}
For any $\mu \in \cM(\R^m)$, $h \in H$, and $\epsilon > 0$ we have that:
\begin{enumerate}[label=\roman*.]
\item $\norm{T_{2\epsilon} |\mu|}_H \leq \norm{T_{\epsilon} |\mu|}_H$, where $|\mu|$ denotes the total variation measure of $\mu$;
\item $\norm{T_\epsilon h}_H \leq \norm{h}_H$;
\item $\langle T_\epsilon \mu, h \rangle = \mu(T_\epsilon h)$;
\item If, in addition, $\partial_i h \in H$, $i = 1, \dots, m$, then $\partial_i T_\epsilon h = T_\epsilon \partial_i h$ (with the partial derivative understood in the weak sense).
\item If $\phi \in \dC^1_b(\R^m)$, then $\partial_i T_\epsilon \phi = T_\epsilon(\partial_i \phi)$.
\end{enumerate}
\end{lemma}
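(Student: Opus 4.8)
The plan is to reduce every claim to the single observation that $T_\epsilon$ is convolution against the Gaussian density $\psi_\epsilon$, i.e.\ $T_\epsilon f = \psi_\epsilon * f$ and $T_\epsilon \mu = \psi_\epsilon * \mu$ (using that $\psi_\epsilon$ is even, so $\psi_\epsilon(x-y) = \psi_\epsilon(y-x)$), together with three elementary facts: $\int_{\R^m} \psi_\epsilon(x)\,\dd x = 1$, $\psi_\epsilon \in \dL^2(\R^m)$, and the semigroup identity $\psi_\epsilon * \psi_\epsilon = \psi_{2\epsilon}$ (the convolution of two centred Gaussian densities of covariance $\epsilon I_m$ is the centred Gaussian density of covariance $2\epsilon I_m$). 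I would establish the points in the order iii, ii, i, v, iv, since each relies on the preceding ones.

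For iii I would write $\langle T_\epsilon \mu, h \rangle = \int_{\R^m} \bigl(\int_{\R^m} \psi_\epsilon(x-y)\,\mu(\dd x)\bigr) h(y)\,\dd y$ and exchange the order of integration; this is legitimate by Fubini's theorem, because $\int_{\R^m} \psi_\epsilon(x-y) \abs{h(y)}\,\dd y \leq \norm{\psi_\epsilon}_H \norm{h}_H$ by Cauchy--Schwarz is a bounded function of $x$, integrable against the finite measure $\abs{\mu}$, and carrying the inner integral onto the $x$-variable yields $\mu(T_\epsilon h)$. Specialising $\mu$ to $h'\,\dd x$ records the self-adjointness $\langle T_\epsilon h', h \rangle = \langle h', T_\epsilon h \rangle$ on $H$. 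For ii I would apply Jensen's inequality to the probability kernel $\psi_\epsilon(x-\cdot)$, namely $\abs{T_\epsilon h(x)}^2 \leq \int_{\R^m} \psi_\epsilon(x-y)\,h(y)^2\,\dd y$, and then integrate in $x$ and use $\int_{\R^m}\psi_\epsilon(x-y)\,\dd x = 1$ (this is just Young's inequality $\norm{\psi_\epsilon * h}_H \leq \norm{\psi_\epsilon}_{\dL^1}\norm{h}_H$). Point i then follows at once: Minkowski's integral inequality gives $\norm{T_\epsilon \abs{\mu}}_H \leq \norm{\psi_\epsilon}_H \abs{\mu}(\R^m) < +\infty$, so $T_\epsilon\abs{\mu} \in H$; the semigroup identity gives $T_{2\epsilon}\abs{\mu} = \psi_{2\epsilon} * \abs{\mu} = \psi_\epsilon * (\psi_\epsilon * \abs{\mu}) = T_\epsilon\bigl(T_\epsilon\abs{\mu}\bigr)$, and applying ii to the $H$-function $T_\epsilon\abs{\mu}$ bounds $\norm{T_{2\epsilon}\abs{\mu}}_H$ by $\norm{T_\epsilon\abs{\mu}}_H$.

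For v I would use the convolution form $T_\epsilon \phi(x) = \int_{\R^m} \psi_\epsilon(z)\,\phi(x-z)\,\dd z$ and differentiate under the integral in the $x$-variable; since $\phi \in \dC^1_b(\R^m)$ has bounded partial derivatives, the difference quotients are dominated by an integrable function and dominated convergence gives $\partial_i T_\epsilon \phi(x) = \int_{\R^m}\psi_\epsilon(z)\,\partial_i\phi(x-z)\,\dd z = T_\epsilon(\partial_i\phi)(x)$. For iv, where only the weak derivative $\partial_i h \in H$ is available, one cannot differentiate the $h$-factor; instead I would differentiate the kernel, $\partial_i T_\epsilon h(x) = \int_{\R^m} \partial_{x_i}\psi_\epsilon(x-y)\,h(y)\,\dd y$ (again justified by dominated convergence, using $\partial_i\psi_\epsilon \in \dL^2(\R^m)$ and $h \in H$), rewrite $\partial_{x_i}\psi_\epsilon(x-y) = -\partial_{y_i}\psi_\epsilon(x-y)$, and integrate by parts in $y$ to move the derivative onto $h$, obtaining $\int_{\R^m}\psi_\epsilon(x-y)\,\partial_i h(y)\,\dd y = T_\epsilon\partial_i h(x)$. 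Since $T_\epsilon h$ is smooth, this classical derivative is also the weak one.

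The main obstacle is precisely this integration by parts in iv: the natural test function $y \mapsto \psi_\epsilon(x-y)$ is smooth but not compactly supported, so the defining identity of the weak derivative (valid a priori only against $\dC^\infty_c(\R^m)$) does not apply directly. I would resolve this through the density of $\dC^\infty_c(\R^m)$ in $W^2_1(\R^m)$: since $h \in W^2_1(\R^m)$ and $\psi_\epsilon(x-\cdot) \in W^2_1(\R^m)$ (it is in fact Schwartz), the identity $\int_{\R^m} u\,\partial_i v\,\dd y = -\int_{\R^m}\partial_i u\,v\,\dd y$, which holds on $\dC^\infty_c(\R^m) \times \dC^\infty_c(\R^m)$ by the classical integration by parts, extends by continuity to all $u, v \in W^2_1(\R^m)$ and legitimises the step. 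Everything else is a bookkeeping of Fubini and dominated-convergence justifications of the kind already employed in the proof of Theorem~\ref{th:Zakai}.
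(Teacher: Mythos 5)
Your proof is correct. Note that the paper does not prove Lemma~\ref{lem:Tprop} itself but defers to \citep[Solution to Exercise~7.3]{bain:fundofstochfilt} and \citep[Lemma~6.7, Lemma~6.8]{xiong:intrtostochfiltth}, and your argument --- Fubini for iii, Jensen/Young for ii, the Gaussian semigroup identity $\psi_\epsilon * \psi_\epsilon = \psi_{2\epsilon}$ combined with ii for i, and differentiation under the integral plus integration by parts (extended from $\dC^\infty_c(\R^m)$ to $W^2_1(\R^m)$ by density) for iv and v --- is essentially the same standard route taken in those references; the only point stated a bit loosely is the dominated-convergence step in iv, where the correct dominating function is the envelope $\sup_{\abs{t} \leq 1}\abs{\partial_i \psi_\epsilon(\cdot + t e_i)}$, which lies in $\dL^2(\R^m)$ thanks to the Gaussian decay of $\partial_i \psi_\epsilon$.
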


In this section we will work under the following hypotheses, in addition to Assumptions~\ref{hyp:main} and \ref{hyp:h}, concerning coefficients $b$, $\sigma$ and $h$ appearing in SDEs~\eqref{eq:XSDE} and \eqref{eq:YSDEctrl}. In what follows we will use the shorter notation 
\begin{equation}\label{eq:adef}
a(t,x) \coloneqq \frac 12 \sigma\sigma^*(t,x), \quad t \in [0,T], \, x \in \R^m.
\end{equation}
\begin{assumption}\label{hyp:bddcoeff}
There exist constants $K_b$, $K_\sigma$, $K_h$, such that, for all $i,j = 1, \dots,m$, all $\ell = 1, \dots, n$, all $t \in [0,T]$, and all $x \in \R^m$,
\begin{equation*}
\abs{b_i(t,x)} \leq K_b, \quad \abs{a_{ij}(t,x)} \leq K_\sigma, \quad \abs{h_\ell(t,x)} \leq K_h.
\end{equation*}
\end{assumption}

In the next section, we obtain the uniqueness result for the solution to the Zakai equation when the process $\nu$ has continuous paths. This will be then exploited in Section~\ref{sec:nudisc} in order to obtain the uniqueness claim when $\nu$ has jump times that do not accumulate over $[0,T]$.

%%%%%%%%%%%%%%%%%%%%%%%%%%%%%%%%%%%%%%%%%

\subsection{\texorpdfstring{The case in which $\nu$ has continuous paths}{The case in which nu has continuous paths}}
\label{sec:nucont}

We start our analysis with the following Lemma, which will play a fundamental role in the sequel. Its proof can be found in Appendix \ref{app:technical}.
\begin{lemma}
\label{lem:tousegronwall}
Suppose that Assumption \ref{hyp:bddcoeff} holds. Let $\zeta = (\zeta_t)_{t \in [0,T]}$ be a $\bbY$-adapted, c\`adl\`ag, $\cM_+(\R^m)$-valued solution of \eqref{eq:Zakai}, with $\zeta_{0^-} = \xi \in \cP(\R^m)$. If $\nu$ is continuous, then for any $\epsilon > 0$
\begin{equation*}
\bbE[\sup_{t \in [0,T]} \norm{T_\epsilon \zeta_{t^-}}_H^2] < +\infty.
\end{equation*}
\end{lemma}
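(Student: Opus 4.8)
The plan is to follow the density (Kurtz--Xiong) approach: smooth $\zeta_t$ with the heat kernel, derive an $H$-valued It\^o equation for $u_t \coloneqq T_\epsilon\zeta_t$, and run a Gronwall estimate on $\bbE[\sup_t\|u_t\|_H^2]$, keeping $\epsilon > 0$ fixed. Two preliminary reductions come first. Since $\zeta$ is c\`adl\`ag and, by testing \eqref{eq:Zakai} against a fixed bounded continuous function, $\zeta_{t^-}$ is the weak limit of $\zeta_s$ as $s\uparrow t$, Fatou's lemma in the $y$-variable gives $\|T_\epsilon\zeta_{t^-}\|_H^2 \le \liminf_{s\uparrow t}\|T_\epsilon\zeta_s\|_H^2$, so it suffices to bound $\bbE[\sup_{t\in[0,T]}\|T_\epsilon\zeta_t\|_H^2]$. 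Second, testing \eqref{eq:Zakai} with $\phi = \one$ (for which $\cA_s\one = 0$ and $\dD_x\one = 0$) gives $\zeta_t(\one) = 1 + \int_0^t \gamma^{-1}(s)\zeta_s(h_s)\,\dd\overline B_s$; as $h$ is bounded (Assumption~\ref{hyp:bddcoeff}) and $\gamma^{-1}$ is bounded (uniform positive definiteness of $\gamma$), $\zeta_\cdot(\one)$ solves a linear SDE with bounded coefficients, hence is a nonnegative martingale with $\bbE[\sup_{t\in[0,T]}\zeta_t(\one)^2] < +\infty$. This total-mass bound will furnish the integrable forcing term in the Gronwall step.

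Next I would derive the equation for $u_t$. Fixing $y\in\R^m$ and applying \eqref{eq:Zakai} to the time-independent test function $\psi_\epsilon(\cdot - y) \in \dC^2_b(\R^m)$ (the jump sum is absent since $\nu$ is continuous), then moving the $x$-derivatives of $\psi_\epsilon$ onto the $y$-variable, one obtains
\begin{equation*}
u_t(y) = T_\epsilon\xi(y) + \int_0^t \cL^\epsilon_s\zeta_s(y)\,\dd s - \sum_{i=1}^m\int_0^t \partial_i u_s(y)\,\dd\nu^i_s + \sum_{i=1}^n\int_0^t g^i_s(y)\,\dd\overline B^i_s,
\end{equation*}
where $\cL^\epsilon_s\zeta_s \coloneqq -\sum_i \partial_i T_\epsilon(b_i(s,\cdot)\zeta_s) + \sum_{ij}\partial^2_{ij}T_\epsilon(a_{ij}(s,\cdot)\zeta_s)$, with $a$ as in \eqref{eq:adef}, and $g^i_s \coloneqq \sum_j \gamma^{-1}_{ij}(s)\,T_\epsilon(h_j(s,\cdot)\zeta_s)$. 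By Remark~\ref{rem:Tepsreg} all of these are $W^2_k$-valued, so this is a genuine It\^o equation in $H$. Applying It\^o's formula to $\|u_t\|_H^2$ (equivalently, the scalar It\^o formula to $u_t(y)^2$ followed by a stochastic Fubini argument on a localizing sequence of stopping times) yields
\begin{equation*}
\|u_t\|_H^2 = \|T_\epsilon\xi\|_H^2 + \int_0^t 2\langle u_s, \cL^\epsilon_s\zeta_s\rangle\,\dd s - \sum_i\int_0^t 2\langle u_s, \partial_i u_s\rangle\,\dd\nu^i_s + \sum_i\int_0^t \|g^i_s\|_H^2\,\dd s + N_t,
\end{equation*}
with $N_t = 2\sum_i\int_0^t\langle u_s, g^i_s\rangle\,\dd\overline B^i_s$ a local martingale. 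The transport term vanishes identically, since $2\langle u_s,\partial_i u_s\rangle = \int_{\R^m}\partial_i(u_s^2)\,\dd y = 0$ by the fast decay of $u_s$; this is exactly where continuity of $\nu$ is used, and it is why no $\dd\abs{\nu}$ contribution survives.

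The crux is estimating the remaining drift, which is the main obstacle. Integrating by parts and using Cauchy--Schwarz, each term of $2\langle u_s,\cL^\epsilon_s\zeta_s\rangle$ pairs $u_s$ (or a derivative of $u_s$) with $T_\epsilon$ of a coefficient-weighted measure. Two facts close the estimate. First, boundedness of $b$, $a=\tfrac12\sigma\sigma^*$, $h$ (Assumption~\ref{hyp:bddcoeff}) together with $\zeta_s\in\cM_+(\R^m)$, which give $\|T_\epsilon(f(s,\cdot)\zeta_s)\|_H \le \|f\|_\infty\,\|T_\epsilon\zeta_s\|_H = \|f\|_\infty\|u_s\|_H$ (Lemma~\ref{lem:Tprop} and $T_\epsilon\abs{\zeta_s}=u_s$); in particular $\|g^i_s\|_H \le C\|u_s\|_H$. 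Second, Young's convolution inequality, which controls the derivative factors via $\|\partial_i u_s\|_H = \|(\partial_i\psi_\epsilon)*\zeta_s\|_H \le \|\partial_i\psi_\epsilon\|_{\dL^2}\,\zeta_s(\one)$ and analogously for second derivatives; the constants $\|\partial_i\psi_\epsilon\|_{\dL^2}$, $\|\partial^2_{ij}\psi_\epsilon\|_{\dL^2}$ are finite (they blow up as $\epsilon\to 0$, which is harmless since $\epsilon$ is fixed). After Young's inequality $\zeta_s(\one)\|u_s\|_H \le \tfrac12\zeta_s(\one)^2 + \tfrac12\|u_s\|_H^2$ this gives, for some constant $C_\epsilon$,
\begin{equation*}
2\langle u_s, \cL^\epsilon_s\zeta_s\rangle + \sum_i\|g^i_s\|_H^2 \le C_\epsilon\bigl(\|u_s\|_H^2 + \zeta_s(\one)^2\bigr).
\end{equation*}
The point here is that, unlike the constant-coefficient case, there is no exact dissipative cancellation of the second-order term, so the derivatives landing on $u_s$ must be absorbed through the heat-kernel smoothing, at the price of the $\epsilon$-dependent constants and the total-mass factors $\zeta_s(\one)$ --- which is precisely why positivity of $\zeta$ and the boundedness Assumption~\ref{hyp:bddcoeff} enter.

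Finally I would take the supremum over $t$, then expectations, estimate $N_t$ with the Burkholder--Davis--Gundy and Young inequalities (absorbing $\tfrac12\bbE\sup_s\|u_s\|_H^2$ into the left-hand side), and conclude by Gronwall's lemma. This uses $\bbE\int_0^T \zeta_s(\one)^2\,\dd s < +\infty$ from the preliminary total-mass bound and $\|T_\epsilon\xi\|_H \le \|\psi_\epsilon\|_{\dL^2} < +\infty$; a standard localization provides the a priori integrability needed to treat $N$ as a true martingale. This yields $\bbE[\sup_{t\in[0,T]}\|T_\epsilon\zeta_t\|_H^2] < +\infty$, and the first reduction then gives the claim for $\zeta_{t^-}$.
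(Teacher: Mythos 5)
Your proof is correct, but it takes a genuinely different and considerably longer route than the paper's. Both arguments hinge on the same key ingredient, which is your second preliminary reduction: testing \eqref{eq:Zakai} with $\one$ to get $\zeta_t(\one) = 1 + \int_0^t \gamma^{-1}(s)\zeta_s(h_s)\,\dd \overline B_s$ and deducing $\bbE[\sup_{t \in [0,T]}\zeta_t(\one)^2] < +\infty$. (Note that this deduction is precisely the nontrivial step the paper spends most of its proof on: it identifies $\zeta(\one)$ as the Dol\'eans-Dade exponential of $\int_0^\cdot \gamma^{-1}(s)\zeta_s^1(h_s)\,\dd\overline B_s$ with $\zeta^1_s \coloneqq \zeta_s/\zeta_s(\one)$, whose integrand is bounded by $nC_\gamma K_h$, and then invokes a moment theorem for exponential martingales; your one-line appeal to ``linear SDE with bounded coefficients'' is the same fact, provable by localization, It\^o, Gronwall and Doob, so it is acceptable but it is not a free step.) Where the two proofs diverge is what comes next: the paper concludes immediately from the semigroup identity
\begin{equation*}
\norm{T_\epsilon \zeta_t}_H^2 = \int_{\R^m}\int_{\R^m} \psi_{2\epsilon}(x-z)\,\zeta_t(\dd x)\,\zeta_t(\dd z) \leq (4\pi\epsilon)^{-\frac m2}\,\zeta_t(\one)^2,
\end{equation*}
which uses only positivity of $\zeta_t$ and the fixed $\epsilon$, so that the total-mass bound finishes the proof in one line. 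You instead derive the smoothed $H$-valued SPDE for $u_t = T_\epsilon\zeta_t$, apply It\^o to $\norm{u_t}_H^2$, kill the transport term, estimate the drift via Young's convolution inequality, and close with BDG plus Gronwall. All of these steps check out (the vanishing of $\langle u_s,\partial_i u_s\rangle$, the bounds $\norm{T_\epsilon(f\zeta_s)}_H \leq \norm{f}_\infty \norm{u_s}_H$ and $\norm{\partial_i u_s}_H \leq \norm{\partial_i\psi_\epsilon}_{\dL^2}\zeta_s(\one)$, the localization via stopping times built from $\zeta_\cdot(\one)$), but note that your own drift estimate requires exactly the forcing term $\bbE\int_0^T \zeta_s(\one)^2\,\dd s < +\infty$, i.e.\ the same total-mass bound that already implies the conclusion directly; the energy-estimate machinery is therefore redundant for this lemma. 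What it buys is familiarity with the argument the paper actually needs later: your computation is essentially a sup-in-time version of Proposition~\ref{prop:znormest}, which the paper reserves for the uniqueness step, where $\zeta$ is a \emph{signed} measure and the total-mass shortcut (and indeed your positivity-based bounds) no longer apply, so the bound on $T_\epsilon\abs{\zeta}$ must be taken as a hypothesis rather than derived.
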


The next result is a useful estimate.
\begin{proposition}\label{prop:znormest}
Suppose that Assumption \ref{hyp:bddcoeff} holds. Let $\zeta = (\zeta_t)_{t \in [0,T]}$ be a $\bbY$-adapted, c\`adl\`ag, $\cM(\R^m)$-valued solution of \eqref{eq:Zakai}, with $\zeta_{0^-} = \xi \in \cP(\R^m)$. Define the process
\begin{equation}\label{eq:Adef}
A_t \coloneqq t + \sum_{i=1}^m \abs{\nu^i}_t, \quad t \in [0,T].
\end{equation}
If $\nu$ is continuous and if, for any $\epsilon > 0$, $\bbE[\sup_{t \in [0,T]} \norm{T_\epsilon \abs{\zeta}_{t^-}}_H^2] < +\infty$, then there exists a constant $M>0$ such that, for each $\epsilon > 0$ and all $\bbF$-stopping times $\tau \leq t$, $t \in [0,T]$,
\begin{equation}\label{eq:tepszetaest}
\bbE[\norm{T_\epsilon \zeta_{\tau^-}}^2_H] \leq \norm{T_\epsilon \zeta_{0^-}}_H^2 + M \int_0^{\tau^-} \bbE[\norm{T_\epsilon \abs{\zeta}_{s^-}}^2_H] \, \dd A_s.
\end{equation}
\end{proposition}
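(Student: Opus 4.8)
The plan is to realise $T_\epsilon\zeta_t$ as a smooth $H$-valued semimartingale and run an energy estimate. First I would fix $\epsilon>0$ and, for each $y\in\R^m$, test the Zakai equation \eqref{eq:Zakai} (whose jump sum is absent because $\nu$ is continuous) against the time-independent function $\psi_\epsilon(\cdot-y)\in\dC^2_b(\R^m)$. Since $T_\epsilon\zeta_t(y)=\zeta_t(\psi_\epsilon(\cdot-y))$, this gives the scalar semimartingale decomposition
\begin{equation*}
\dd T_\epsilon\zeta_t(y)=\zeta_t\bigl(\cA_t\psi_\epsilon(\cdot-y)\bigr)\,\dd t+\zeta_{t^-}\bigl(\dD_x\psi_\epsilon(\cdot-y)\bigr)\,\dd\nu_t+\gamma^{-1}(t)\,\zeta_t\bigl(\psi_\epsilon(\cdot-y)h_t\bigr)\,\dd\overline B_t .
\end{equation*}
By Remark~\ref{rem:Tepsreg}, $p_t\coloneqq T_\epsilon\zeta_t\in W^2_k(\R^m)$ for every $k$, so I may compute $\dd\norm{p_t}_H^2$, either by the Itô formula for the squared $H$-norm or, equivalently, by expanding $\norm{p_t}_H^2=\sum_k\langle p_t,\phi_k\rangle^2$ in an orthonormal basis as in Lemma~\ref{lem:density} and summing the scalar expansions. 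The result is $\norm{T_\epsilon\zeta_{0^-}}_H^2$ plus a drift carrying $\cA_t$, a $\dd\nu_t$-term, the quadratic-variation term produced by the $\dd\overline B_t$-integrals, and a local martingale.

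I would then take expectations at the stopping time $\tau$ and dispose of three pieces. The local martingale has zero expectation: its quadratic variation is bounded, using Cauchy--Schwarz and the boundedness of $\gamma^{-1}$ and of $h$ (Assumption~\ref{hyp:bddcoeff}), by $C\int_0^T\norm{T_\epsilon\abs{\zeta}_{s^-}}_H^4\,\dd s\le CT\sup_{s}\norm{T_\epsilon\abs{\zeta}_{s^-}}_H^4$, whose square root is integrable precisely by the standing hypothesis $\bbE[\sup_t\norm{T_\epsilon\abs{\zeta}_{t^-}}_H^2]<+\infty$; Burkholder--Davis--Gundy and optional sampling (after a routine localisation) then kill it. The $\dd\nu_t$-term vanishes identically: its integrand is $\sum_i\langle p_{s^-},\zeta_{s^-}(\partial_i\psi_\epsilon(\cdot-\bullet))\rangle\,\dd\nu^i_s$, and because $\zeta_{s^-}(\partial_i\psi_\epsilon(\cdot-y))=-\partial_{y_i}p_{s^-}(y)$ the inner product equals $-\tfrac12\int_{\R^m}\partial_{y_i}(p_{s^-}^2)\,\dd y=0$, the smoothness and decay of $p_{s^-}\in W^2_k$ justifying the integration by parts. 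Hence only the $\cA_t$-drift and the quadratic-variation term survive, both integrated $\dd s$; since $\dd A_s\ge\dd s$, bounding them by $M\int_0^{\tau^-}\bbE[\norm{T_\epsilon\abs{\zeta}_{s^-}}_H^2]\,\dd s\le M\int_0^{\tau^-}\bbE[\norm{T_\epsilon\abs{\zeta}_{s^-}}_H^2]\,\dd A_s$ yields \eqref{eq:tepszetaest}. (The $\abs{\nu^i}$ contributions to $A$ are superfluous in the continuous case but are retained for reuse in the jump case of Section~\ref{sec:nudisc}.)

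It remains to bound the two surviving terms by $C\norm{T_\epsilon\abs{\zeta}_{s^-}}_H^2$ with a constant \emph{uniform in $\epsilon$}. The quadratic-variation term is immediate, since $\abs{h_\ell}\le K_h$ and $\abs{T_\epsilon\mu}\le T_\epsilon\abs{\mu}$ pointwise give $\norm{\gamma^{-1}(s)T_\epsilon(h^\ell_s\zeta_s)}_H\le C\norm{T_\epsilon\abs{\zeta}_{s^-}}_H$. For the first-order part of the $\cA_t$-drift I would use the semigroup identity $\psi_{2\epsilon}=\psi_\epsilon*\psi_\epsilon$ to rewrite it as a double integral against $\partial_i\psi_{2\epsilon}(x-x')$ and then symmetrise in $x\leftrightarrow x'$: since $\partial_i\psi_{2\epsilon}$ is odd this manufactures the factor $b_i(s,x)-b_i(s,x')$, whose Lipschitz bound $\le L_b\abs{x-x'}$ combines with the Gaussian estimate $\abs{z}\,\abs{\partial_i\psi_\delta(z)}\le C\psi_{2\delta}(z)$ and Lemma~\ref{lem:Tprop}(i) to produce $C\norm{T_\epsilon\abs{\zeta}_{s^-}}_H^2$, with no $\epsilon$-blow-up.

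The second-order part is the main obstacle. Using $\partial_{x_i}\psi_\epsilon=-\partial_{y_i}\psi_\epsilon$ and one integration by parts in $y$, I would cast it as $-\int_{\R^m}\sum_{ij}a_{ij}(s,w)\,\partial_i p_s(w)\,\partial_j p_s(w)\,\dd w$ plus a commutator remainder; the first term is $\le0$ by positive semidefiniteness of $a=\tfrac12\sigma\sigma^*$ and is simply discarded. The difficulty is that a crude Cauchy--Schwarz bound on the remainder carries the unbounded factor $\norm{\nabla p_s}_H\sim\epsilon^{-1/2}$. The resolution is that the remainder is a genuine commutator, of the form $\int[a_{ij}(s,x)-a_{ij}(s,w)]\,\partial_i\psi_\epsilon(x-w)\,\zeta_s(\dd x)$, in which the Lipschitz continuity of $a$ (inherited from the bounded, Lipschitz $\sigma$) supplies exactly the factor $\abs{x-w}$ needed to absorb the kernel derivative via $\abs{z}\,\abs{\partial_i\psi_\epsilon(z)}\le C\psi_{2\epsilon}(z)$ and Lemma~\ref{lem:Tprop}(i); the borderline contribution, which still sees $\partial_j p_s$, is then reabsorbed into the discarded nonpositive quadratic form by Young's inequality (so that no ellipticity of $a$ is needed). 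This Friedrichs-type commutator estimate, performed uniformly in $\epsilon$, is where the genuine work lies; once it is in place, the constant $M$ depends only on the bounds of Assumption~\ref{hyp:bddcoeff} and the Lipschitz constants, and the remaining manipulations are bookkeeping with the properties in Lemma~\ref{lem:Tprop}.
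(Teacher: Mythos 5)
Your proposal is correct, and its skeleton is the same as the paper's: mollify the Zakai equation by the heat kernel, apply It\^o's formula to the squared $H$-norm of $T_\epsilon\zeta$, remove the stochastic integral by optional sampling, and bound the surviving terms by $M\,\norm{T_\epsilon\abs{\zeta}_{s^-}}_H^2$ with a constant independent of $\epsilon$ (the paper runs the computation through an orthonormal-basis expansion with Fatou and dominated convergence rather than pointwise kernel testing, but, as you note, the two are equivalent). Two differences are worth recording. First, the paper does not prove the $\epsilon$-uniform estimates at all: the $\epsilon$-dependent bounds $K_1,K_2,K_4$ are used only to justify passing to the limit, and the final uniform bounds are quoted from \citep[Lemmas~6.10 and~6.11]{xiong:intrtostochfiltth}; your symmetrization/commutator argument is in substance a reconstruction of the proofs of those cited lemmas, which makes your route self-contained at the cost of carrying the hardest technical work yourself. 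For the second-order term your plan only closes if the commutator is split at the level of $\sigma$ (via $a=\frac 12 \sigma\sigma^*$, so that the borderline term pairs with $\sigma^*\dD_x p_s$), since the discarded nonpositive form controls $\norm{\sigma^*\dD_x p_s}_H^2$ and not $\norm{\dD_x p_s}_H^2$; this is exactly what your remark ``no ellipticity of $a$ is needed'' requires, and it is how the cited lemmas are actually proved. Second, you observe that the $\dd\nu$-term vanishes identically, because
\begin{equation*}
\langle T_\epsilon\zeta_{s^-},\,\partial_i T_\epsilon\zeta_{s^-}\rangle
= \frac12\int_{\R^m}\partial_i\bigl((T_\epsilon\zeta_{s^-})^2\bigr)(y)\,\dd y = 0,
\end{equation*}
$T_\epsilon\zeta_{s^-}$ being in $W^2_1(\R^m)$ by Remark~\ref{rem:Tepsreg}; the paper instead keeps this term and bounds it through \citep[Lemma~6.10]{xiong:intrtostochfiltth} by a multiple of $\norm{T_\epsilon\abs{\zeta}_{s^-}}_H^2\,\dd\abs{\nu^i}_s$, which is the only reason the $\abs{\nu^i}$-components appear in $A$ in \eqref{eq:Adef}. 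Your version is thus a mild sharpening in the continuous case, and retaining $\dd A_s$ on the right-hand side of \eqref{eq:tepszetaest}, as you do, preserves the form of the statement needed for the jump case in Section~\ref{sec:nudisc}.
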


\begin{proof}
To ease notations, for any $\epsilon > 0$ denote by $Z^\epsilon$ the process $Z^\epsilon_t \coloneqq T_\epsilon \zeta_t$, $t \geq 0$.
Fix $\epsilon > 0$ and consider an orthonormal basis $\{\phi_k\}_{k \in \N}$ of $H$ such that $\phi_k \in \dC_b^2(\R^m)$, for any $k \in \N$. Writing the Zakai equation for the function $T_\epsilon \phi_k$ (recall that $\nu$ is continuous by assumption) we get:
\begin{equation}\label{eq:Zakaimoll}
\zeta_t(T_\epsilon \phi_k) 
= \xi(T_\epsilon \phi_k) + \int_0^t \zeta_s\bigl(\cA_s T_\epsilon \phi_k\bigr) \, \dd s 
+ \int_0^t \zeta_{s^-}\bigl(\dD_x T_\epsilon \phi_k\bigr) \, \dd \nu_s + \int_0^t \gamma^{-1}(s) \zeta_s(T_\epsilon \phi_k h_s) \, \dd \overline B_s,
\end{equation}
%\begin{align}\label{eq:Zakaimoll}
%\zeta_t(T_\epsilon \phi_k) 
%&= \xi(T_\epsilon \phi_k) + \int_0^t \zeta_s\bigl(\cA_s T_\epsilon \phi_k\bigr) \, \dd s 
%\\
%&+ \int_0^t \zeta_{s^-}\bigl(\dD_x T_\epsilon \phi_k\bigr) \, \dd \nu_s + \int_0^t \gamma^{-1}(s) \zeta_s(T_\epsilon \phi_k h_s) \, \dd \overline B_s, \quad t \in [0,T].
%\end{align}
for all $t \in [0,T]$.
Notice that, for any $\phi \in \dC^2_b(\R^m)$ and any $t \in [0,T]$, we can write:
\begin{equation*}
\cA_t \phi(x) = \sum_{i=1}^m b_i(t,x) \partial_i \phi(x) + \sum_{i,j=1}^m a_{ij}(t,x) \partial_{ij} \phi(x), \quad x \in \R^m, 
\end{equation*}
where $a$ is the function defined in \eqref{eq:adef}.
For any $i,j = 1, \dots, m$, $\ell = 1, \dots, n$, and $t \in [0,T]$, we define the random measures on $\R^m$:
\begin{equation*}
b^i_t\zeta_t(\dd x) \coloneqq b_i(t,x) \zeta_t(\dd x),
\quad
a^{ij}_t\zeta_t(\dd x) \coloneqq a_{ij}(t,x) \zeta_t(\dd x),
\quad
\gamma h^\ell_t\zeta_t(\dd x) \coloneqq \sum_{p=1}^n \gamma^{-1}_{\ell p}(t) h_p(t,x) \zeta_t(\dd x).
\end{equation*}
%\begin{gather*}
%b^i_t\zeta_t(\dd x) \coloneqq b_i(t,x) \zeta_t(\dd x),
%\qquad
%a^{ij}_t\zeta_t(\dd x) \coloneqq a_{ij}(t,x) \zeta_t(\dd x),
%\\
%\gamma h^\ell_t\zeta_t(\dd x) \coloneqq \sum_{p=1}^n \gamma^{-1}_{\ell p}(t) h_p(t,x) \zeta_t(\dd x).
%\end{gather*}
These measures are $\bbP$-almost surely finite, for any $t \in [0,T]$, thanks to Assumption \ref{hyp:bddcoeff} and to \eqref{eq:invgammanorm} (see also \eqref{eq:gammazetahest} for the last measure).

Applying Lemma~\ref{lem:Tprop} and the integration by parts formula we get:
\begin{align*}
&\mathop{\phantom{=}}\zeta_t\bigl(\cA_t T_\epsilon \phi_k\bigr)
%= \int_{\R^m} \cA_t T_\epsilon \phi_k(x) \, \zeta_t(\dd x) \\
%&
= \sum_{i=1}^m \int_{\R^m} b_i(t,x) \partial_i T_\epsilon \phi_k(x) \, \zeta_t(\dd x) + \sum_{i,j=1}^m \int_{\R^m} a_{ij}(t,x) \partial_{ij} T_\epsilon \phi_k(x) \, \zeta_t(\dd x) \\
&= \sum_{i=1}^m \int_{\R^m} b_i(t,x) T_\epsilon \partial_i \phi_k(x) \, \zeta_t(\dd x) + \sum_{i,j=1}^m \int_{\R^m} a_{ij}(t,x) T_\epsilon \partial_{ij} \phi_k(x) \, \zeta_t(\dd x) \\
&= \sum_{i=1}^m  b^i_t\zeta_t(T_\epsilon \partial_i \phi_k) + \sum_{i,j=1}^m  a^{ij}_t\zeta_t(T_\epsilon \partial_{ij} \phi_k)
\\ 
&= \sum_{i=1}^m \inprod{T_\epsilon (b^i_t\zeta_t)}{\partial_i \phi_k} + \sum_{i,j=1}^m \inprod{T_\epsilon (a^{ij}_t\zeta_t)}{\partial_{ij} \phi_k} 
%\\
%&= \sum_{i=1}^m \int_{\R^m} \partial_i \phi_k(x) \, T_\epsilon b^i_t\zeta_t(x) \, \dd x + \sum_{i,j=1}^m \int_{\R^m} \partial_{ij} \phi_k(x) \, T_\epsilon a^{ij}_t\zeta_t(x) \, \dd x
%\\
%&= \sum_{i,j=1}^m \int_{\R^m} \phi_k(x) \, \partial_{ij} T_\epsilon a^{ij}_t\zeta_t(x) \, \dd x - \sum_{i=1}^m \int_{\R^m} \phi_k(x) \, \partial_i T_\epsilon b^i_t\zeta_t(x) \, \dd x
= \sum_{i,j=1}^m \langle \phi_k, \partial_{ij} T_\epsilon(a^{ij}_t\zeta_t) \rangle - \sum_{i=1}^m \langle \phi_k, \partial_i T_\epsilon(b^i_t\zeta_t)\rangle.
\end{align*}
In a similar way, we obtain $\zeta_t\bigl(\partial_i T_\epsilon \phi_k\bigr) = - \langle \phi_k, \partial_i T_\epsilon \zeta_t \rangle$, and
\begin{equation*}
\sum_{j=1}^n \gamma^{-1}_{ij}(t) \zeta_t\bigl(T_\epsilon \phi_k h^j_t\bigr) = \langle \phi_k, T_\epsilon(\gamma h^i_t \zeta_t) \rangle, \quad i = 1, \dots, n.
\end{equation*} 

Putting together all these facts, we can rewrite \eqref{eq:Zakaimoll} as
\begin{align*}
\langle \phi_k, Z_t^\epsilon \rangle 
&= \langle \phi_k, Z_{0^-}^\epsilon\rangle + \sum_{i,j=1}^m \int_0^t \langle \phi_k, \partial_{ij} T_\epsilon(a^{ij}_s\zeta_s) \rangle \, \dd s - \sum_{i=1}^m \int_0^t \langle \phi_k, \partial_i T_\epsilon(b^i_s\zeta_s)\rangle \, \dd s 
\\
&- \sum_{i=1}^m \int_0^t \langle \phi_k, \partial_i T_\epsilon \zeta_{s^-} \rangle \, \dd \nu^i_s 
+ \sum_{i= 1}^n \int_0^t \langle \phi_k, T_\epsilon(\gamma h^i_s \zeta_s) \rangle \, \dd \overline B_s^i, \quad \bbP\text{-a.s.}, \quad t \in [0,T].
\end{align*}

Applying It\^o's formula we get that, for all $t \in [0,T]$, $\bbP$-a.s.,
\begin{align*}
\langle \phi_k, Z_t^\epsilon \rangle^2 
&= \langle \phi_k, Z_{0^-}^\epsilon\rangle^2 + \sum_{i,j=1}^m \int_0^t 2\langle \phi_k, Z_s^\epsilon \rangle \, \langle \phi_k, \partial_{ij} T_\epsilon(a^{ij}_s\zeta_s) \rangle \, \dd s
\\
&- \sum_{i=1}^m \int_0^t 2 \langle \phi_k, Z_s^\epsilon \rangle \langle \phi_k, \partial_i T_\epsilon(b^i_s\zeta_s)\rangle \, \dd s 
+ \sum_{i=1}^n \int_0^t \langle \phi_k, T_\epsilon(\gamma h^i_s \zeta_s) \rangle^2 \, \dd s
\\
&- \sum_{i=1}^m \int_0^t 2 \langle \phi_k, Z_{s^-}^\epsilon \rangle \langle \phi_k, \partial_i T_\epsilon \zeta_{s^-} \rangle \, \dd \nu^i_s 
+ \sum_{i=1}^n  \int_0^t 2 \langle \phi_k, Z_s^\epsilon \rangle \langle \phi_k, T_\epsilon(\gamma h^i_s \zeta_s) \rangle \, \dd \overline B_s^i.
\end{align*}

Using Assumption \ref{hyp:bddcoeff} and \eqref{eq:invgammanorm}, it is possible to show that the stochastic integral with respect to Brownian motion $\overline B$ is a $\bbP$-martingale. By the optional sampling theorem, this stochastic integral has zero expectation even when evaluated at any bounded stopping time. Therefore, picking an $\bbF$-stopping time $\tau \leq t$, for arbitrary $t \in [0,T]$, summing over $k$ up to $N \in \N$, and taking the expectation, by Fatou's lemma we have that
\begin{align}\label{eq:znormest}
&\mathop{\phantom{=}}\bbE\left[\norm{Z_{\tau^-}^\epsilon}_H^2 \right] 
= \bbE\biggl[\lim_{N \to \infty} \sum_{k=1}^N \langle \phi_k, Z_{\tau^-}^\epsilon \rangle^2 \biggr]
\leq \liminf_{N \to \infty} \bbE\biggl[\sum_{k=1}^N \langle \phi_k, Z_{\tau^-}^\epsilon \rangle^2 \biggr]
\leq \norm{Z_{0^-}^\epsilon}_H^2 \notag
\\
&+ \liminf_{N \to \infty} \Biggl\{\sum_{i,j=1}^m \bbE\biggl[\int_0^{\tau^-} \sum_{k=1}^N  2\langle \phi_k, Z_s^\epsilon \rangle \, \langle \phi_k, \partial_{ij} T_\epsilon(a^{ij}_s\zeta_s) \rangle \, \dd s\biggr] \notag 
\\
&\quad- \sum_{i=1}^m \bbE\biggl[\int_0^{\tau^-} \sum_{k=1}^N 2 \langle \phi_k, Z_s^\epsilon \rangle \langle \phi_k, \partial_i T_\epsilon(b^i_s\zeta_s)\rangle \, \dd s\biggr]
%\notag
%\\ 
%&\quad
+ \sum_{i=1}^n \bbE\biggl[\int_0^{\tau^-} \sum_{k=1}^N \langle \phi_k, T_\epsilon(\gamma h^i_s \zeta_s) \rangle^2 \, \dd s\biggr] \notag
\\
&\quad- \sum_{i=1}^m \bbE\biggl[\int_0^{\tau^-} \sum_{k=1}^N 2 \langle \phi_k, Z_{s^-}^\epsilon \rangle \langle \phi_k, \partial_i T_\epsilon \zeta_{s^-} \rangle \, \dd \nu^i_s\biggr]\Biggr\},
\end{align}
where we used the fact that, since $Z_{0^-}^\epsilon \in H$, $\lim\limits_{N \to \infty} \sum\limits_{k=1}^N \langle \phi_k, Z_{0^-}^\epsilon \rangle^2 = \norm{Z_{0^-}^\epsilon}_H^2$. More generally, since $Z_t^\epsilon \in H$, for all $t \in [0,T]$, $\bbP$-a.s. (cf. Remark \ref{rem:Tepsreg}), we have that
\begin{equation}\label{eq:ztnormest}
\sum_{k=1}^N  \langle \phi_k, Z_t^\epsilon \rangle^2 \leq \sum_{k=1}^\infty  \langle \phi_k, Z_t^\epsilon \rangle^2 = \norm{Z_t^\epsilon}_H^2, \quad t \in [0,T].
\end{equation}

We want now to estimate the quantities appearing inside the limit inferior, in order to exchange the limit and the integrals in \eqref{eq:znormest}.
First of all, let us notice that, thanks to Assumption \ref{hyp:bddcoeff}, the following estimates hold $\bbP$-a.s., for all $i,j = 1, \dots, m$, all $\ell = 1, \dots, n$, and all $t \in [0,T]$:
\begin{align*}
&\norm{\partial_{ij} T_\epsilon(a^{ij}_t\zeta_t)}_H^2 \leq K_1 \norm{T_\epsilon \abs{\zeta}_t}_H^2,
&
&\norm{\partial_i T_\epsilon(b^i_t\zeta_t)}_H^2 \leq K_2 \norm{T_\epsilon \abs{\zeta}_t}_H^2,
\\
&\norm{T_\epsilon(\gamma h^\ell_t \zeta_t)}_H^2 \leq K_3 \norm{T_\epsilon \abs{\zeta}_t}_H^2,
&
&\norm{\partial_i T_\epsilon \zeta_t}_H^2 \leq K_4 \norm{T_\epsilon \abs{\zeta}_t}_H^2,
\end{align*}
where $K_1 = K_1(\epsilon, m, \sigma)$, $K_2 = K_2(\epsilon, m, b)$, $K_3 = K_3(n, h, \gamma)$, $K_4 = K_4(\epsilon, m)$.
They can be proved following a reasoning analogous to that of \citep[Lemma~7.5]{bain:fundofstochfilt} (see also \citep[Chapter~6]{xiong:intrtostochfiltth}).

Recalling that $2|ab| \leq a^2 + b^2$, for all $a,b \in \R$, using the estimates provided above, Lemma~\ref{lem:Tprop}, and \eqref{eq:ztnormest}, we get that, for all $N \in \N$, all $i,j = 1, \dots, m$, and all $s \in [0,T]$,
\begin{align*}
&\mathop{\phantom{\leq}} \ind_{s < \tau} \sum_{k=1}^N  2\langle \phi_k, Z_s^\epsilon \rangle \, \langle \phi_k, \partial_{ij} T_\epsilon(a^{ij}_s\zeta_s) \rangle
\leq \sum_{k=1}^N  \langle \phi_k, Z_s^\epsilon \rangle^2 + \sum_{k=1}^N \langle \phi_k, \partial_{ij} T_\epsilon(a^{ij}_s\zeta_s) \rangle^2
\\
&\leq \norm{Z_s^\epsilon}_H^2 + \norm{\partial_{ij} T_\epsilon(a^{ij}_s\zeta_s)}_H^2
\leq (1+K_1) \norm{T_{\epsilon/2} \abs{\zeta}_s}_H^2.
\end{align*}
With analogous computations, we get, for all $i = 1, \dots, m$, all $N \in \N$, and all $s \in [0,T]$,
\begin{gather*}
\ind_{s < \tau} \sum_{k=1}^N 2 \langle \phi_k, Z_s^\epsilon \rangle \langle \phi_k, \partial_i T_\epsilon(b^i_s\zeta_s)\rangle
\leq (1+K_2) \norm{T_{\epsilon/2} \abs{\zeta}_s}_H^2,
\\
\ind_{s < \tau} \sum_{k=1}^N 2 \langle \phi_k, Z_{s^-}^\epsilon \rangle \langle \phi_k, \partial_i T_\epsilon \zeta_{s^-} \rangle
\leq (1+K_4) \norm{T_{\epsilon/2} \abs{\zeta}_s}_H^2,
\end{gather*}
and, for all $N \in \N$ and all $s \in [0,T]$,
\begin{equation*}
\sum_{i=1}^n \ind_{s < \tau} \sum_{k=1}^N \langle \phi_k, T_\epsilon(\gamma h^i_s \zeta_s) \rangle^2
\leq n K_3 \norm{T_\epsilon \abs{\zeta}_s}_H^2.
\end{equation*}
The terms appearing on the r.h.s. of these estimates are $\dd t \otimes \dd \bbP$- and \mbox{$\dd \abs{\nu^i}_t \otimes \dd \bbP$-integrable} on $[0,T] \times \Omega$, for all $i = 1,\dots,m$, since, for any $\epsilon > 0$,
\begin{align*}
&\bbE\left[\int_0^T \norm{T_\epsilon \abs{\zeta}_s}_H^2 \, \dd s \right] \leq T \bbE[\sup_{s \in [0,T]} \norm{T_\epsilon \abs{\zeta}_s}_H^2] < +\infty,
\\
&\bbE\left[\int_0^T \norm{T_\epsilon \abs{\zeta}_s}_H^2 \, \dd \abs{\nu^i}_s \right] \leq K \bbE[\sup_{s \in [0,T]} \norm{T_\epsilon \abs{\zeta}_s}_H^2] < +\infty.
\end{align*}
Therefore, by the dominated convergence theorem, we can pass to the limit in \eqref{eq:znormest}, as $N \to \infty$,
\begin{align}\label{eq:znormest2}
&\mathop{\phantom{=}}\bbE\left[\norm{Z_{\tau^-}^\epsilon}_H^2 \right] 
\leq \norm{Z_{0^-}^\epsilon}_H^2
+ \sum_{i,j=1}^m \bbE\biggl[\int_0^{\tau^-} \! 2\langle Z_s^\epsilon, \partial_{ij} T_\epsilon(a^{ij}_s\zeta_s) \rangle \, \dd s\biggr] - \sum_{i=1}^m \bbE\biggl[\int_0^{\tau^-} \! 2 \langle Z_s^\epsilon, \partial_i T_\epsilon(b^i_s\zeta_s)\rangle \, \dd s\biggr] \notag
\\
&+ \sum_{i=1}^n \bbE\biggl[\int_0^{\tau^-} \norm{T_\epsilon(\gamma h^i_s \zeta_s)}_H^2 \, \dd s\biggr] - \sum_{i=1}^m \bbE\biggl[\int_0^{\tau^-} \langle Z_{s^-}^\epsilon, \partial_i T_\epsilon \zeta_{s^-} \rangle \, \dd \nu^i_s\biggr],
\end{align}

We finally get the claim, bounding the terms on the r.h.s.\ of \eqref{eq:znormest2} by using the following results: for the second one, apply \citep[Lemma~6.11]{xiong:intrtostochfiltth}; for the third and the last one, apply \citep[Lemma~6.10]{xiong:intrtostochfiltth}; for the fourth one, use the fact that the constant $K_3$ above does not depend on $\epsilon$.
\end{proof}

Proposition~\ref{prop:znormest} allows to deduce that any $\cM_+(\R^m)$-valued solution of the Zakai equation \eqref{eq:Zakai} admits a density with respect to Lebesgue measure.

\begin{proposition}\label{prop:densitynucont}
Suppose that Assumption \ref{hyp:bddcoeff} holds. Let $\zeta = (\zeta_t)_{t \in [0,T]}$ be a $\bbY$-adapted, c\`adl\`ag, $\cM_+(\R^m)$-valued solution of \eqref{eq:Zakai}, with $\zeta_{0^-} = \xi \in \cP(\R^m)$.
If $\nu$ is continuous and if $\xi$ admits a square-integrable density with respect to Lebesgue measure on $\R^m$, then there exists an $H$-valued process $Z = (Z_t)_{t \in [0,T]}$ such that, for all $t \in [0,T]$,
\begin{equation*}
\zeta_t(\dd x) = Z_t(x) \dd x, \quad \bbP\text{-a.s.}
\end{equation*}
Moreover, $Z$ is $\bbY$-adapted, continuous, and satisfies $\bbE[\norm{Z_t}_H^2] < +\infty$, for all $t \in [0,T]$.
\end{proposition}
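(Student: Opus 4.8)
The plan is to establish a bound on the mollified densities $T_\epsilon \zeta_t$ that is uniform in the smoothing parameter $\epsilon$, and then to pass to the limit $\epsilon \to 0$ via Lemma~\ref{lem:density}. Fix an orthonormal basis $\{\phi_k\}_{k \in \N}$ of $H$ with $\phi_k \in \dC_b(\R^m)$. By Lemma~\ref{lem:Tprop} one has $\langle T_\epsilon \zeta_t, \phi_k \rangle = \zeta_t(T_\epsilon \phi_k)$, and since $T_\epsilon \phi_k \to \phi_k$ boundedly and pointwise while $\zeta_t$ is a finite measure, dominated convergence gives $\zeta_t(T_\epsilon \phi_k) \to \zeta_t(\phi_k)$ as $\epsilon \to 0$. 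Hence, for every $N$, $\sum_{k=1}^N \zeta_t(\phi_k)^2 = \lim_{\epsilon \to 0} \sum_{k=1}^N \langle T_\epsilon \zeta_t, \phi_k \rangle^2 \leq \liminf_{\epsilon \to 0} \norm{T_\epsilon \zeta_t}_H^2$, and letting $N \to \infty$ yields $\sum_{k \in \N} \zeta_t(\phi_k)^2 \leq \liminf_{\epsilon \to 0} \norm{T_\epsilon \zeta_t}_H^2$. It therefore suffices to prove that $\bbE[\norm{T_\epsilon \zeta_t}_H^2]$ is bounded uniformly in $\epsilon$ and $t$: Fatou's lemma then gives $\bbE[\sum_k \zeta_t(\phi_k)^2] < +\infty$, so $\sum_k \zeta_t(\phi_k)^2 < +\infty$ $\bbP$-a.s., and Lemma~\ref{lem:density} produces a square-integrable density $Z_t$ of $\zeta_t$ with $\norm{Z_t}_H^2 = \sum_k \zeta_t(\phi_k)^2$ and $\bbE[\norm{Z_t}_H^2] < +\infty$.

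To obtain the uniform bound I would proceed as follows. Since $\zeta$ is $\cM_+(\R^m)$-valued we have $\abs{\zeta} = \zeta$, so Lemma~\ref{lem:tousegronwall} gives $\bbE[\sup_{t} \norm{T_\epsilon \zeta_{t^-}}_H^2] < +\infty$ for every $\epsilon > 0$, which is exactly the integrability hypothesis of Proposition~\ref{prop:znormest}. Applying that proposition (again with $\abs{\zeta} = \zeta$) yields, for each deterministic $t \in [0,T]$, $\bbE[\norm{T_\epsilon \zeta_{t^-}}_H^2] \leq \norm{T_\epsilon \zeta_{0^-}}_H^2 + M \int_0^{t} \bbE[\norm{T_\epsilon \zeta_{s^-}}_H^2] \, \dd A_s$, with $A$ as in \eqref{eq:Adef}. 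Writing $g_\epsilon(t) \coloneqq \bbE[\norm{T_\epsilon \zeta_{t^-}}_H^2]$ and taking expectations on both sides, the expectation can be interchanged with the $\dd A$-integral by Fubini--Tonelli (the integrand is non-negative and $A$ increasing), giving the deterministic inequality $g_\epsilon(t) \leq \norm{T_\epsilon \zeta_{0^-}}_H^2 + M \int_0^{t} g_\epsilon(s) \, \dd \bar A(s)$, where $\bar A(s) \coloneqq \bbE[A_s]$ is continuous, non-decreasing, and bounded by $T + mK$ thanks to the finite-fuel bound \eqref{eq:nufinitefuel}. The Gronwall-type Lemma~\ref{lem:gronwall} then gives $g_\epsilon(t) \leq \norm{T_\epsilon \zeta_{0^-}}_H^2 \exp(M(T+mK))$. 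Finally, as $\xi$ admits a square-integrable density $q \in H$, the measure $T_\epsilon \zeta_{0^-} = T_\epsilon \xi$ has density $T_\epsilon q$, so Lemma~\ref{lem:Tprop} gives $\norm{T_\epsilon \zeta_{0^-}}_H = \norm{T_\epsilon q}_H \leq \norm{q}_H$. Hence $g_\epsilon(t) \leq \norm{q}_H^2 \exp(M(T+mK))$ for all $\epsilon > 0$ and $t \in [0,T]$; denote the right-hand side by $C$, which is the desired uniform bound.

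Because $\nu$ is continuous, all jump terms in the Zakai equation \eqref{eq:Zakai} vanish, so for each test function $\phi$ the path $t \mapsto \zeta_t(\phi)$ is continuous; as this holds along a countable determining family, $\zeta$ is weakly continuous and $\zeta_{t^-} = \zeta_t$ for every $t$. The two preceding paragraphs thus apply to $\zeta_t$ and produce, for each $t$, a density $Z_t \in H$ with $\bbE[\norm{Z_t}_H^2] \leq C$. Adaptedness is then immediate: $\langle Z_t, \phi_k \rangle = \zeta_t(\phi_k)$ is $\cY_t$-measurable for every $k$, and these coefficients determine $Z_t$ in $H$, so $Z$ is $\bbY$-adapted.

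The remaining, and most delicate, point is the continuity of $t \mapsto Z_t$ in $H$. For fixed $\epsilon$ the process $Z^\epsilon_t \coloneqq T_\epsilon \zeta_t$ has continuous $H$-valued paths (weak continuity of $\zeta$ together with the smoothing of the heat kernel), and $Z^\epsilon_t \to Z_t$ in $H$ for each $t$. The plan is to upgrade this to uniform-in-$t$ convergence in $L^2(\Omega)$ by proving a Cauchy estimate $\bbE[\sup_{t \in [0,T]} \norm{Z^\epsilon_t - Z^{\epsilon'}_t}_H^2] \to 0$ as $\epsilon, \epsilon' \to 0$; this is obtained by running the energy computation behind Proposition~\ref{prop:znormest} for the difference $Z^\epsilon - Z^{\epsilon'}$, controlling the martingale term by the Burkholder--Davis--Gundy inequality and closing the estimate with the same Gronwall argument and the uniform bounds already derived. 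A uniform limit of continuous $H$-valued paths being continuous, one concludes that a modification of $Z$ has continuous trajectories in $H$. I expect this last step—the uniform control of the difference of mollifications and the attendant passage to the limit—to be the main obstacle, the earlier estimates being fairly direct consequences of Proposition~\ref{prop:znormest}, Lemma~\ref{lem:tousegronwall}, and the finite-fuel bound.
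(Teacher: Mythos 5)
Your architecture coincides with the paper's: mollify with the heat kernel, prove a bound on $\bbE[\norm{T_\epsilon\zeta_t}_H^2]$ uniform in $\epsilon$, pass to the limit via Fatou and Lemma~\ref{lem:density}, identify $\bbE[\norm{Z_t}_H^2]=\bbE[\sum_k\zeta_t(\phi_k)^2]$, and control the initial term through $\norm{T_\epsilon Z_{0^-}}_H\le\norm{Z_{0^-}}_H$. The gap is in how you close the Gronwall loop. You read \eqref{eq:tepszetaest} as an almost-sure inequality with the expectation sitting \emph{inside} the $\dd A$-integral, took a further expectation, and invoked Fubini--Tonelli to get the deterministic inequality $g_\epsilon(t)\le\norm{T_\epsilon\zeta_{0^-}}_H^2+M\int_0^t g_\epsilon(s)\,\dd\bar A(s)$ with $\bar A(s)\coloneqq\bbE[A_s]$. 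But what the proof of Proposition~\ref{prop:znormest} actually establishes (every term in \eqref{eq:znormest2} has the form $\bbE[\int_0^{\tau^-}(\cdots)\,\dd s]$ or $\bbE[\int_0^{\tau^-}(\cdots)\,\dd\nu^i_s]$), and what the hypothesis \eqref{eq:gronwallest} of Lemma~\ref{lem:gronwall} is designed to receive, is
\begin{equation*}
\bbE\bigl[\norm{T_\epsilon\zeta_{\tau^-}}_H^2\bigr]\;\le\;\norm{T_\epsilon\zeta_{0^-}}_H^2+M\,\bbE\biggl[\int_0^{\tau^-}\norm{T_\epsilon\abs{\zeta}_{s^-}}_H^2\,\dd A_s\biggr],
\end{equation*}
i.e.\ the expectation is \emph{outside} the pathwise integral (the printed form of \eqref{eq:tepszetaest} is an imprecision: as written, it compares a number with a random variable, and such an a.s.\ bound is not provable in general). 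With the correct form your Fubini step fails: the integrand $\norm{T_\epsilon\zeta_{s^-}}_H^2$ and the random measure $\dd A_s$ are both functionals of the observation path ($\zeta$ is $\bbY$-adapted, and $A$ is built from the $\bbY$-adapted processes $\abs{\nu^i}$), hence dependent, so $\bbE[\int_0^t H_{s^-}\,\dd A_s]$ cannot be rewritten as, nor bounded in the useful direction by, $\int_0^t\bbE[H_{s^-}]\,\dd\bbE[A_s]$. This correlation is precisely why the paper states Proposition~\ref{prop:znormest} for all stopping times $\tau$ and proves the stochastic Gronwall Lemma~\ref{lem:gronwall} via a change-of-time argument, instead of using a deterministic Gronwall inequality. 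The repair is short and is exactly the paper's route: keep the stopping-time form, note that $H_t\coloneqq\norm{T_\epsilon\zeta_t}_H^2$ is non-negative with $\bbE[H_{t^-}]<+\infty$ by Lemma~\ref{lem:tousegronwall} (so condition~\ref{hyp:Hpos} of Lemma~\ref{lem:gronwall} holds), and apply Lemma~\ref{lem:gronwall} directly to obtain $\bbE[\norm{T_\epsilon\zeta_t}_H^2]\le\norm{T_\epsilon\zeta_{0^-}}_H^2\,\de^{M(T+mK)}$; the detour through $\bar A$ should be discarded.

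A secondary point: you only sketch the $H$-continuity of $t\mapsto Z_t$ (a Cauchy-in-$\epsilon$ estimate with Burkholder--Davis--Gundy), flagging it as the main obstacle and leaving it unproven. The paper obtains adaptedness and continuity of the density process by standard arguments from the weak continuity of $\zeta$ (which, as you correctly note, follows from the continuity of $\nu$) combined with the uniform bound; in a complete write-up this step must be carried out or referenced, but it is a lesser issue than the Gronwall step above, where your argument as written does not go through.
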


\begin{proof}
As a consequence of Lemma~\ref{lem:tousegronwall}, the assumptions of Proposition~\ref{prop:znormest} hold and we have that
for each $\epsilon > 0$ and all $\bbF$-stopping times $\tau \leq t$, $t \in [0,T]$,
\begin{equation*}
\bbE[\norm{T_\epsilon \zeta_{\tau^-}}^2_H] \leq \norm{T_\epsilon \zeta_{0^-}}_H^2 + M \int_0^{\tau^-} \bbE[\norm{T_\epsilon \zeta_{s^-}}^2_H] \, \dd A_s.
\end{equation*}
Therefore, we can apply Lemma~\ref{lem:gronwall} and get that, for all $t \in [0,T]$,
\begin{equation}\label{eq:rhonormest}
\bbE[\norm{T_\epsilon \zeta_{t^-}}^2_H] = \bbE[\norm{T_\epsilon \zeta_t}^2_H] \leq \norm{T_\epsilon \zeta_{0^-}}_H^2 \de^{M(T+mK)},
\end{equation}
where we used the fact that $\zeta$ is continuous, since $\nu$ is, and that $A_t \leq A_T \leq T+mK$, for all $t \in [0,T]$.
Notice that, denoting by $Z_{0^-}$ the density of $\xi$ with respect to Lebesgue measure on $\R^m$,
\begin{equation*}
T_\epsilon \zeta_{0^-}(y) = \int_{\R^m} \psi_{\epsilon}(x-y) \, \xi(\dd x) = \int_{\R^m} \psi_{\epsilon}(x-y) \, Z_{0^-}(x) \, \dd x = T_\epsilon Z_{0^-}(y), \quad y \in \R^m.
\end{equation*}
By point~ii. of Lemma~\ref{lem:Tprop} and since the constants appearing in \eqref{eq:rhonormest} do not depend on $\epsilon$, we get
\begin{equation*}
\sup_{\epsilon > 0} \bbE[\norm{T_\epsilon \zeta_t}^2_H] \leq \norm{Z_{0^-}}_H^2 \de^{M(T+mK)}, \quad t \in [0,T].
\end{equation*}
Taking, as in the Proof of Proposition~\ref{prop:znormest}, an orthonormal basis $\{\phi_k\}_{k \in \N}$ of $H$ such that $\phi_k \in \dC_b^2(\R^m)$, for any $k \in \N$, the dominated convergence theorem entails that, for all $k \in \N$,
\begin{equation*}
\lim_{\epsilon \to 0} \langle T_\epsilon \zeta_t, \phi_k \rangle = \lim_{\epsilon \to 0} \int_{\R^m} \left\{ \int_{\R^m} \psi_\epsilon(x-y) \phi_k(y) \, \dd y \right\} \zeta_t(\dd x) = \int_{\R^m} \phi_k(x) \, \zeta_t(\dd x) = \zeta_t(\phi_k).
\end{equation*}
Applying Fatou's Lemma we get that, for all $t \in [0,T]$,
\begin{align}\label{eq:ptnormest}
&\mathop{\phantom{\leq}}\bbE\left[\sum_{k=1}^\infty \zeta_t(\phi_k)^2 \right] = \bbE\left[\sum_{k=1}^\infty \lim_{\epsilon \to 0} \langle T_\epsilon \zeta_t, \phi_k \rangle^2 \right] 
\leq \liminf_{\epsilon \to 0} \bbE\left[\sum_{k=1}^\infty \langle T_\epsilon \zeta_t, \phi_k \rangle^2 \right] \notag
\\
&\leq \sup_{\epsilon > 0} \bbE[\norm{T_\epsilon \zeta_t}_H^2] \leq \norm{Z_{0^-}}_H^2 \de^{M(T+mK)} < +\infty,
\end{align}
and hence, from Lemma~\ref{lem:density} we deduce that, $\bbP$-a.s., $\zeta_t$ is absolutely continuous with respect to Lebesgue measure on $\R^m$, for all $t \in [0,T]$. Moreover, its density process $Z = (Z_t)_{t \in [0,T]}$ takes values in $H$ and, by standard results, is $\bbY$-adapted and continuous (because $\nu$ is).

Finally, since $\zeta_t(\phi_k) = \int_{\R^m} \phi_k(x) Z_t(x) \, \dd x = \langle \phi_k, Z_t \rangle$, for all $k \in \N$, and all $t \in [0,T]$, we get
\begin{equation*}
\bbE[\norm{Z_t}_H^2] = \bbE\left[\sum_{k=1}^\infty \langle \phi_k, Z_t \rangle^2 \right] = \bbE\left[\sum_{k=1}^\infty \zeta_t(\phi_k)^2 \right] < +\infty, \quad t \in [0,T]. \qedhere
\end{equation*}
\end{proof}

We are now ready to state our first uniqueness result for the solution to the Zakai equation, in the case where $\nu$ is continuous.
\begin{theorem}\label{th:Zakaiuniqnucont}
Suppose that Assumptions~\ref{hyp:main}, \ref{hyp:h}, \ref{hyp:bddcoeff}, and \eqref{eq:xi3} hold. If $\nu$ is continuous and if $\xi \in \cP(\R^m)$ admits a square-integrable density with respect to Lebesgue measure on $\R^m$, then the unnormalized filtering process $\rho$, defined in \eqref{eq:rho}, is the unique $\bbY$-adapted, continuous, $\cM_+(\R^m)$-valued solution to the Zakai equation \eqref{eq:Zakai}. 

Moreover, there exists a $\bbY$-adapted, continuous, $H$-valued process $p = (p_t)_{t \in [0,T]}$ satisfying, for all $t \in [0,T]$, $\bbE[\norm{p_t}_H^2] < +\infty$ and $\rho_t(\dd x) = p_t(x) \dd x$, $\bbP$-a.s.
\end{theorem}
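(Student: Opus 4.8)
The plan is to separate the statement into its two assertions and to reduce both to the preparatory results already proved for $\cM_+(\R^m)$- and $\cM(\R^m)$-valued solutions, following the density approach of \citep{kurtzxiong1999:SPDEs}. For the ``moreover'' part I would first note that $\rho$ is itself a solution to \eqref{eq:Zakai} by Theorem~\ref{th:Zakai} (whose hypotheses, including \eqref{eq:xi3}, are in force), that it is $\cM_+(\R^m)$-valued since $\rho_t(\phi_t)=\bbE[\eta_t\phi(t,X_t)\mid\cY_t]\ge 0$ whenever $\phi\ge 0$ (as $\eta_t>0$), and that it is $\bbY$-adapted and continuous because $\nu$, and hence $X$ and $\eta$, are continuous. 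Since $\xi$ admits a square-integrable density, Proposition~\ref{prop:densitynucont} applies to $\zeta=\rho$ and produces a $\bbY$-adapted, continuous, $H$-valued process $p$ with $\bbE[\norm{p_t}_H^2]<+\infty$ and $\rho_t(\dd x)=p_t(x)\,\dd x$, $\bbP$-a.s.

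For uniqueness I would take two $\bbY$-adapted, continuous, $\cM_+(\R^m)$-valued solutions $\zeta^1,\zeta^2$ with $\zeta^i_{0^-}=\xi$, apply Proposition~\ref{prop:densitynucont} to each to obtain continuous $H$-valued density processes $Z^1,Z^2$, and set $\zeta\coloneqq\zeta^1-\zeta^2$. By linearity of \eqref{eq:Zakai}, $\zeta$ is an $\cM(\R^m)$-valued solution with $\zeta_{0^-}=0$ and density process $Z\coloneqq Z^1-Z^2$. To invoke Proposition~\ref{prop:znormest} I must check its standing hypothesis $\bbE[\sup_t\norm{T_\epsilon\abs{\zeta}_{t^-}}_H^2]<+\infty$; this follows because $\abs{\zeta}\le\zeta^1+\zeta^2$ as measures and $T_\epsilon$ is positivity-preserving, so $\norm{T_\epsilon\abs{\zeta}_{t^-}}_H\le\norm{T_\epsilon\zeta^1_{t^-}}_H+\norm{T_\epsilon\zeta^2_{t^-}}_H$, each summand having finite $\bbE[\sup_t(\cdot)^2]$ by Lemma~\ref{lem:tousegronwall}. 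Proposition~\ref{prop:znormest} then yields, for every $\epsilon>0$ and every $\bbF$-stopping time $\tau\le t$,
\[
\bbE[\norm{T_\epsilon\zeta_{\tau^-}}^2_H]\le M\int_0^{\tau^-}\bbE[\norm{T_\epsilon\abs{\zeta}_{s^-}}^2_H]\,\dd A_s,
\]
the boundary term $\norm{T_\epsilon\zeta_{0^-}}_H^2$ vanishing since $\zeta_{0^-}=0$.

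The decisive step is to let $\epsilon\to 0$ and convert the total-variation smoothing into the genuine $H$-norm of $Z$, thereby closing a Gronwall loop. On the left, $T_\epsilon\zeta_{\tau^-}\to Z_{\tau^-}$ in $H$ by mollifier convergence, so Fatou's lemma gives $\bbE[\norm{Z_{\tau^-}}_H^2]\le\liminf_{\epsilon\to 0}\bbE[\norm{T_\epsilon\zeta_{\tau^-}}_H^2]$. On the right, since $\zeta_{s^-}$ has density $Z_{s^-}$ its total variation $\abs{\zeta}_{s^-}$ has density $\abs{Z_{s^-}}$; by point~i.\ of Lemma~\ref{lem:Tprop} the map $\epsilon\mapsto\norm{T_\epsilon\abs{\zeta}_{s^-}}_H$ increases, as $\epsilon\downarrow 0$, to $\norm{\abs{Z_{s^-}}}_H=\norm{Z_{s^-}}_H$, so monotone convergence yields $\int_0^{\tau^-}\bbE[\norm{T_\epsilon\abs{\zeta}_{s^-}}_H^2]\,\dd A_s\to\int_0^{\tau^-}\bbE[\norm{Z_{s^-}}_H^2]\,\dd A_s$. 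Combining, $\bbE[\norm{Z_{\tau^-}}_H^2]\le M\int_0^{\tau^-}\bbE[\norm{Z_{s^-}}_H^2]\,\dd A_s$, and the Gronwall-type Lemma~\ref{lem:gronwall} (using $A_T\le T+mK$) forces $\bbE[\norm{Z_t}_H^2]=0$ for all $t\in[0,T]$. Hence $Z^1=Z^2$, i.e.\ $\zeta^1=\zeta^2$, so $\rho$ is the unique solution.

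I expect the main obstacle to be exactly this $\abs{\zeta}$-versus-$\zeta$ mismatch in Proposition~\ref{prop:znormest}: unlike the $\cM_+$-valued case, where $\abs{\zeta}=\zeta$ and the estimate is immediately self-referential, for the signed difference the inequality cannot be iterated as it stands. It is only after one has established that every solution possesses a square-integrable density—so that $\norm{\abs{Z}}_H=\norm{Z}_H$—and has justified the interchange of limits (Fatou on the left, the monotonicity of Lemma~\ref{lem:Tprop}.i on the right) that the limit $\epsilon\to 0$ converts the bound into a self-referential Gronwall inequality in $\bbE[\norm{Z_{s^-}}_H^2]$.
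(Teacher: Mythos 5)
Your proposal is correct and follows essentially the same route as the paper: the ``moreover'' part via Proposition~\ref{prop:densitynucont} applied to $\rho$, and uniqueness by applying Proposition~\ref{prop:densitynucont} to each solution, bounding $\abs{\zeta}\le\zeta^{(1)}+\zeta^{(2)}$ to verify the hypothesis of Proposition~\ref{prop:znormest} via Lemma~\ref{lem:tousegronwall}, passing to the limit $\epsilon\to 0$ (Fatou on the left, a convergence argument on the right), and closing with Lemma~\ref{lem:gronwall}. The only cosmetic difference is that you justify the right-hand limit by monotone convergence through Lemma~\ref{lem:Tprop}.i, whereas the paper uses dominated convergence (the contraction property $\norm{T_\epsilon\abs{Z_{s^-}}}_H\le\norm{Z_{s^-}}_H$ giving the dominating function); both are valid.
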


\begin{proof}
Clearly, the unnormalized filtering process $\rho$, defined in \eqref{eq:rho}, is a $\bbY$-adapted, continuous (since $\nu$ is), $\cM_+(\R^m)$-valued solution to \eqref{eq:Zakai}. Therefore, the second part of the statement follows directly from Proposition~\ref{prop:densitynucont}.

Uniqueness can be established as follows. Let $\zeta^{(1)}, \zeta^{(2)}$ be two $\bbY$-adapted, c\`adl\`ag, $\cM_+(\R^m)$-valued solutions to \eqref{eq:Zakai}. Define $\zeta \coloneqq \zeta^{(1)}-\zeta^{(2)} \in \cM(\R^m)$ and let $Z \coloneqq Z^{(1)} - Z^{(2)} \in H$ be its density process, where $Z^{(1)}$ and $Z^{(2)}$ are the density processes of $\zeta^{(1)}$ and $\zeta^{(2)}$, respectively, which exist thanks to Proposition~\ref{prop:densitynucont}.

Standard facts from measure theory show that, for all non-negative, bounded, measurable functions $\phi \colon \R^m \to \R$ and all $t \in [0,T]$, $\abs{\zeta}_t(\phi) \leq \zeta^{(1)}_t(\phi) + \zeta^{(2)}_t(\phi)$. From this fact, applying Lemma~\ref{lem:tousegronwall} we deduce that
\begin{equation*}
\bbE[\sup_{t \in [0,T]} \norm{T_\epsilon \abs{\zeta}_{t^-}}_H^2] \leq 2 \bbE[\sup_{t \in [0,T]} \norm{T_\epsilon \zeta^{(1)}_{t^-}}_H^2] + 2\bbE[\sup_{t \in [0,T]} \norm{T_\epsilon \zeta^{(2)}_{t^-}}_H^2] < +\infty.
\end{equation*}
Therefore, from Proposition~\ref{prop:znormest} we get that for all $\epsilon > 0$ and all $\bbF$-stopping times $\tau \leq t$, $t \in [0,T]$,
\begin{equation*}
\bbE[\norm{T_\epsilon \zeta_{\tau^-}}^2_H] \leq M \int_0^{\tau^-} \bbE[\norm{T_\epsilon \abs{\zeta}_{s^-}}^2_H] \, \dd A_s,
\end{equation*}
where $A$ is defined in \eqref{eq:Adef}.
An application of the dominated convergence theorem shows that $\norm{T_\epsilon \abs{\zeta}_{t^-}}_H^2 \longrightarrow \norm{Z_t}^2_H$, as $\epsilon \to 0$, for all $t \in [0,T]$, and hence, by Fatou's lemma
\begin{align*}
&\mathop{\phantom{\leq}}\bbE[\norm{Z_t}^2_H] = \bbE[\lim_{\epsilon \to 0} \norm{T_\epsilon \zeta_{\tau^-}}^2_H] \leq \liminf_{\epsilon \to 0} \bbE[\norm{T_\epsilon \zeta_{\tau^-}}^2_H] 
\\
&\leq \liminf_{\epsilon \to 0} M \int_0^{\tau^-} \bbE[\norm{T_\epsilon \abs{\zeta}_{s^-}}^2_H] \, \dd A_s = M \int_0^{\tau^-} \bbE[\norm{Z_{s^-}}^2_H] \, \dd A_s.
\end{align*}
Finally, Proposition~\ref{prop:densitynucont} ensures that 
\begin{equation*}
\bbE[\norm{Z_{t^-}}^2_H] \leq 2 \bbE[\norm{Z^{(1)}_{t^-}}^2_H] + 2 \bbE[\norm{Z^{(2)}_{t^-}}^2_H] < +\infty, \quad \text{for all } t \in [0,T],
\end{equation*}
This allows us to use Lemma~\ref{lem:gronwall} to get that, for all $t \in [0,T]$, $\bbE[\norm{Z_{t^-}}^2_H] = \bbE[\norm{Z_t}^2_H] = 0$,
whence we obtain $\norm{Z_t}^2_H = 0$, $\bbP$-a.s., and therefore uniqueness of the solution to the Zakai equation.
\end{proof}

%%%%%%%%%%%%%%%%%%%%%%%%%%%%%%%%%%%%%%

\subsection{\texorpdfstring{The case in which the jump times of $\nu$ do not accumulate}{The case in which the jump times of nu do not accumulate}}
\label{sec:nudisc}

Exploiting the recursive structure of \eqref{eq:ZakaiSPDE}, we can prove uniqueness of the solution to the Zakai equation \eqref{eq:Zakai}, also in the case where the jump times of $\nu$ do not accumulate.

\begin{theorem}\label{th:Zakaiuniqnujump}
Suppose that Assumptions~\ref{hyp:main}, \ref{hyp:h}, \ref{hyp:bddcoeff}, and \eqref{eq:xi3} hold.
If the jump times of $\nu$ do not accumulate over $[0,T]$ and if $\xi \in \cP(\R^m)$ admits a square-integrable density with respect to Lebesgue measure on $\R^m$, then the unnormalized filtering process $\rho$, defined in \eqref{eq:rho}, is the unique $\bbY$-adapted, c\`adl\`ag, $\cM_+(\R^m)$-valued solution to the Zakai equation \eqref{eq:Zakai}. 

Moreover, there exists a $\bbY$-adapted, c\`adl\`ag, $H$-valued process $p = (p_t)_{t \in [0,T]}$ satisfying, for all $t \in [0,T]$, $\bbE[\norm{p_t}_H^2] < +\infty$ and $\rho_t(\dd x) = p_t(x) \dd x$, $\bbP$-a.s.
\end{theorem}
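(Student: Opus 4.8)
The plan is to exploit the recursive decomposition of the Zakai equation recorded in Remark~\ref{rem:ZakaiSPDE} and to reduce the problem, by induction, to the continuous case already settled in Theorem~\ref{th:Zakaiuniqnucont}. Since the jump times of $\nu$ do not accumulate over the compact interval $[0,T]$, there are only finitely many of them, say $0 = T_0 < T_1 < \dots < T_N \leq T$; I set $T_{N+1} \coloneqq +\infty$ and write $\nu^c$ for the continuous part of $\nu$. The key observation is that on each stochastic interval $[T_n \wedge T, T_{n+1} \wedge T)$ the process $\nu$ is continuous, so the summation term in \eqref{eq:Zakai} stays constant there and any $\cM_+(\R^m)$-valued solution $\zeta$ solves, on that interval, a \emph{continuous} Zakai equation started at time $T_n$ from the datum $\zeta_{T_n}$; consecutive intervals are linked only through the jump relation $\zeta_{T_n}(\phi) = \zeta_{T_n^-}(\phi_{T_n}(\cdot + \Delta\nu_{T_n}))$ in \eqref{eq:ZakaiSPDE}. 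I would therefore prove, by induction on $n$, that every such $\zeta$ coincides with $\rho$ on $[0, T_{n+1} \wedge T)$ and admits there a square-integrable density $Z_t$ with $\bbE[\norm{Z_t}_H^2] < +\infty$; the case $n = N$ gives the theorem (the terminal instant $T$ being recovered by a left limit and, if $T$ is a jump time, one further application of the jump relation).

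For the base case, I would note that on $[0, T_1 \wedge T)$ the process $\nu$ has no jumps, the summation term in \eqref{eq:Zakai} vanishes, and $\zeta$ solves the continuous Zakai equation with initial law $\xi$; Theorem~\ref{th:Zakaiuniqnucont} then applies directly, yielding both uniqueness and the square-integrable density on this first interval. For the inductive step, I would first take the left limit in $H$ of the density process on $[0, T_n \wedge T)$ to obtain $Z_{T_n^-}$ with $\bbE[\norm{Z_{T_n^-}}_H^2] < +\infty$, and then observe that the jump relation is, conditionally on $\cY_{T_n}$, nothing but a translation by the $\cY_{T_n}$-measurable vector $\Delta\nu_{T_n}$. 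Hence $\zeta_{T_n}$ has density $Z_{T_n}(\cdot) = Z_{T_n^-}(\cdot - \Delta\nu_{T_n})$, and since the $\dL^2(\R^m)$-norm is translation invariant one gets $\norm{Z_{T_n}}_H = \norm{Z_{T_n^-}}_H$, so that $\bbE[\norm{Z_{T_n}}_H^2] < +\infty$ is preserved across the jump.

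It then remains to re-run, from the stopping time $T_n$ rather than from $0$, the chain of continuous-case results Lemma~\ref{lem:tousegronwall}, Proposition~\ref{prop:znormest}, Proposition~\ref{prop:densitynucont}, and the uniqueness part of Theorem~\ref{th:Zakaiuniqnucont}. Applying It\^o's formula to $\langle \phi_k, T_\epsilon \zeta_t\rangle^2$ on $[T_n, \tau]$ for $\bbF$-stopping times $T_n \leq \tau < T_{n+1} \wedge T$, I expect to obtain the analogue of \eqref{eq:tepszetaest},
\begin{equation*}
\bbE[\norm{T_\epsilon \zeta_{\tau^-}}_H^2] \leq \bbE[\norm{T_\epsilon \zeta_{T_n}}_H^2] + M \int_{T_n}^{\tau^-} \bbE[\norm{T_\epsilon \abs{\zeta}_{s^-}}_H^2] \, \dd A_s,
\end{equation*}
with $A$ as in \eqref{eq:Adef} and the same constant $M$, the driving $\overline B$-integral being once more a martingale by Assumption~\ref{hyp:bddcoeff} so that optional sampling removes it. Lemma~\ref{lem:gronwall} applied on $[T_n \wedge T, T_{n+1} \wedge T)$ then upgrades this into the density statement (via Lemma~\ref{lem:density}) and, for the difference of two solutions that agree at $T_n$, into $\norm{Z_t}_H = 0$, closing the induction.

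The hard part is precisely this transfer of the continuous-case estimates to a random initial time and a random initial condition: Lemma~\ref{lem:tousegronwall} and Propositions~\ref{prop:znormest}--\ref{prop:densitynucont} are stated and proved with the deterministic starting time $0$ and the deterministic law $\xi$, whereas here the equation is initialised at the $\bbY$-stopping time $T_n$ from the merely $\cY_{T_n}$-measurable, $H$-valued density $Z_{T_n}$. I would need to check carefully that the It\^o expansion between successive stopping times, the martingale property of the driving stochastic integral (hence the applicability of optional sampling), and the Fatou and Gronwall passages all survive in this setting. The reassuring point is that each of these steps is carried out after taking expectations and requires only the finiteness $\bbE[\norm{T_\epsilon \zeta_{T_n}}_H^2] < +\infty$, which the translation invariance of the jump relation guarantees; so I expect the arguments to carry over essentially verbatim, the whole difficulty being organizational rather than substantial.
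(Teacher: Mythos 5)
Your proposal follows essentially the same route as the paper's proof: exploit the recursive structure of Remark~\ref{rem:ZakaiSPDE}, settle each inter-jump segment by the continuous-case Theorem~\ref{th:Zakaiuniqnucont}, and transport the density across each jump by translation, using that the jump relation preserves square-integrability of the density (translation invariance of the $\dL^2$ norm). There is, however, one small but genuine gap in your base case: the paper explicitly allows $T_0 = 0$ to be a jump time of $\nu$ (recall $\nu_{0^-} = 0$, so $\Delta \nu_0 = \nu_0$ may be nonzero). In that event your claim that ``on $[0, T_1 \wedge T)$ the summation term in \eqref{eq:Zakai} vanishes and $\zeta$ solves the continuous Zakai equation with initial law $\xi$'' is false: the sum retains its $s=0$ contribution, and the effective initial law is the translate $\xi(\cdot - \Delta\nu_0)$, not $\xi$. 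The fix is one application of your own jump-translation argument at $t = 0$ (note that $\Delta\nu_0$ is $\cY_0$-measurable, hence a.s. deterministic, so the translated law is again a deterministic element of $\cP(\R^m)$ with square-integrable density); the paper does exactly this by starting the first auxiliary problem from $\xi^{(1)}(\dd x) = p_{0^-}(x - \Delta\nu_0)\,\dd x$.

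Concerning the part you single out as hard, the paper takes a slightly different organizational route that avoids restating Lemma~\ref{lem:tousegronwall} and Propositions~\ref{prop:znormest}--\ref{prop:densitynucont} at a random initial time: for each $n$ it freezes $\nu$ at the next jump and time-shifts by $T_n$, producing a process $\nu^{(n+1)}$ continuous on all of $[0,T]$ and an auxiliary Zakai equation on $[0,T]$ started from the translated density; Theorem~\ref{th:Zakaiuniqnucont} is then invoked as a black box, and the auxiliary solution is identified with $\rho$ (and with any solution of \eqref{eq:Zakai}) on $\{T_n \leq t < T_{n+1}\}$. Note, though, that for $n \geq 1$ these auxiliary problems have a $\cY_{T_n}$-measurable (random) initial law and coefficients shifted by a stopping time, so the conditional re-initialization issue you honestly flag is equally present in the paper and is passed over there without comment. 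Your alternative — re-running the estimates from the stopping time $T_n$ — is viable precisely because Proposition~\ref{prop:znormest} and Lemma~\ref{lem:gronwall} are already formulated for stopping times; so, modulo the $t=0$ correction above, your plan is a legitimate variant of the paper's argument rather than a different proof.
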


\begin{proof}
Let us denote by $\rho$ the unnormalized filtering process associated with the initial law $\xi$ and process $\nu$, and by $p_{0^-}$ the density of $\xi$ with respect to Lebesgue measure on $\R^m$. Let $T_0 = 0$ and define the sequence of jump times of $\nu$
\begin{equation*}
T_n \coloneqq \inf\{t > T_{n-1} \colon \Delta \nu_t \neq 0\}, \quad n \in \N,
\end{equation*}
with the usual convention $\inf \emptyset = +\infty$.
Recall that also $T_0$ can be a jump time of $\nu$. Moreover, since the jump times of $\nu$ do not accumulate over $[0,T]$, we have that $T_n \leq T_{n+1}$, $\bbP$-a.s., and $T_n < +\infty \, \Longrightarrow T_n < T_{n+1}$, for all $n \in \N_0$.

We start noticing that the formula $\rho_{T_n}(\phi) = \rho_{{T_n}^-}\bigl(\phi_{T_n}(\cdot + \Delta \nu_{T_n})\bigr)$, $n \in \N_0$, appearing in \eqref{eq:ZakaiSPDE} holds for all $\phi \in \dC_b(\R^m)$. Indeed, continuity of the observation filtration $\bbY$ implies that
\begin{equation*}
\widetilde{\bbE}[\phi(X_{T_n^-}) \mid \cY_{T_n}] = \widetilde{\bbE}[\phi(X_{T_n^-}) \mid \cY_{T_n^-}] = \pi_{T_n^-}(\phi).
\end{equation*}
Using continuity of process $\eta$, Kallianpur-Striebel formula \eqref{eq:KallianpurStriebel}, and the freezing lemma, we get
\begin{align*}
&\mathop{\phantom{=}} \rho_{T_n}(\phi) = \widetilde{\bbE}[\phi(X_{T_n}) \mid \cY_{T_n}] \, \bbE\bigl[\eta_{T_n} \bigm| \cY \bigr] = \widetilde{\bbE}[\phi(X_{T_n^-} + \Delta \nu_{T_n}) \mid \cY_{T_n}] \, \bbE\bigl[\eta_{T_n^-} \bigm| \cY \bigr] \\
& = \pi_{T_n^-}(\phi(\cdot + \Delta \nu_{T_n})) \, \bbE\bigl[\eta_{T_n^-} \bigm| \cY \bigr] = \rho_{T_n^-}(\phi(\cdot + \Delta \nu_{T_n})),
\end{align*}
for all $\phi \in \dC_b(\R^m)$ and all $n \in \N_0$. This, in turn, entails that if $\rho_{T_n^-}$ admits a density $p_{T_n^-}$ with respect to Lebesgue measure, then
\begin{equation*}
\int_{\R^m} \phi(x) \, \rho_{T_n}(\dd x) = \rho_{T_n^-}(\phi(\cdot + \Delta \nu_{T_n})) 
= \int_{\R^m}\phi(x + \Delta \nu_{T_n}) p_{T_n^-}(x) \, \dd x = \int_{\R^m} \phi(x) p_{T_n^-}(x - \Delta \nu_{T_n}) \, \dd x.
\end{equation*}
%\begin{align*}
%&\mathop{\phantom{=}} \int_{\R^m} \phi(x) \, \rho_{T_n}(\dd x) = \rho_{T_n}(\phi) = \rho_{T_n^-}(\phi(\cdot + \Delta \nu_{T_n})) 
%\\
%&= \int_{\R^m}\phi(x + \Delta \nu_{T_n}) p_{T_n^-}(x) \, \dd x = \int_{\R^m} \phi(x) p_{T_n^-}(x - \Delta \nu_{T_n}) \, \dd x.
%\end{align*}
Therefore, since $\dC_b(\R^m)$ is a separating set (see, e.g., \citep[Chapter~3, Section~4]{ethierkurtz86:markov}), we have the equivalence of measures
$\rho_{T_n}(\dd x)$ and $p_{T_n^-}(x - \Delta \nu_{T_n}) \, \dd x$, implying that $\rho_{T_n}$ admits density with respect to Lebesgue measure on $\R^m$, given by $p_{T_n^-}(\cdot - \Delta \nu_{T_n})$.

We can now use the recursive structure of \eqref{eq:ZakaiSPDE} to get the claim. Define the process
\begin{equation*}
\nu^{(1)}_t \coloneqq \nu_t \ind_{t < T_1} + \nu_{T_1} \ind_{t \geq T_1}, \quad t \in [0,T],
\end{equation*}
and the random measure $\xi^{(1)}(\dd x) \coloneqq p_{0^-}(x - \Delta \nu_0) \, \dd x$, on $\R^m$.
Consider, for all $\phi \in \dC^2_b(\R^m)$, the Zakai equation
\begin{multline}\label{eq:Zakai1}
\rho^{(1)}_t(\phi) = \xi^{(1)}(\phi) + \int_0^t \rho^{(1)}_s\bigl(\bigl[\partial_s + \cA_s\bigr] \phi\bigr) \, \dd s 
\\
+ \int_0^t \rho^{(1)}_{s^-}\bigl(\dD_x \phi\bigr) \, \dd \nu^{(1)}_s + \int_0^t \gamma^{-1}(s) \rho^{(1)}_s(\phi h_s) \, \dd \overline B_s, \quad \bbP\text{-a.s.}, \quad t \in [0,T].
\end{multline}
Since $\nu^{(1)}$ satisfies point~\ref{hyp:nu} of Assumption~\ref{hyp:main}, we have that \eqref{eq:Zakai1} is the Zakai equation for the filtering problem of the partially observed system \eqref{eq:XSDE}--\eqref{eq:YSDEctrl}, with initial law $\xi^{(1)}$ and process $\nu^{(1)}$, which is continuous on $[0,T]$. Therefore, by Theorem~\ref{th:Zakaiuniqnucont}, $\rho^{(1)}$ is its unique solution and admits a density $p^{(1)}$ with respect to Lebesgue measure on $\R^m$, with $\bbE[\norm{p^{(1)}_t}^2_H] < +\infty$, for each $t \in [0,T]$. It is clear that, since $\nu_t = \nu_t^{(1)}$ on $\{t < T_1\}$, we have that $\rho_t = \rho_t^{(1)}$ on the same set, and hence $\rho_t$ admits density $p^{(1)}_t$ on $\{t < T_1\}$.

Next, let us define the process
\begin{equation*}
\nu^{(2)}_t \coloneqq \nu_{t+T_1} \ind_{t < T_2 - T_1} + \nu_{T_2} \ind_{t \geq T_2 - T_1}, \quad t \in [0,T],
\end{equation*}
and the random measure $\xi^{(2)}(\dd x) = p_{T_1^-}(x - \Delta \nu_{T_1}) \, \dd x$, on $\R^m$.
Consider, for all $\phi \in \dC^2_b(\R^m)$, the Zakai equation
\begin{multline}\label{eq:Zakai2new}
\rho^{(2)}_t(\phi) = \xi^{(2)}(\phi) + \int_0^t \rho^{(2)}_s\bigl(\bigl[\partial_s + \cA_{s+T_1}\bigr] \phi\bigr) \, \dd s 
\\
+ \int_0^t \rho^{(2)}_{s^-}\bigl(\dD_x \phi\bigr) \, \dd \nu^{(2)}_s + \int_0^t \gamma^{-1}(s+T_1) \rho^{(2)}_s(\phi h_{s+T_1}) \, \dd \overline B_{s+T_1}, \; \bbP\text{-a.s.}, \; t \in [0,T].
\end{multline}
Since $\nu^{(2)}$ satisfies point~\ref{hyp:nu} of Assumption \ref{hyp:main}, we have that \eqref{eq:Zakai2new} is the Zakai equation for the filtering problem of the partially observed system \eqref{eq:XSDE}--\eqref{eq:YSDEctrl}, with initial law $\xi^{(2)}$ and process $\nu^{(2)}$, which is continuous on $[0,T]$. Therefore, by Theorem~\ref{th:Zakaiuniqnucont}, $\rho^{(2)}$ is its unique solution and admits a density $p^{(2)}$ with respect to Lebesgue measure on $\R^m$, with $\bbE[\norm{p^{(2)}_t}^2_H] < +\infty$, for each $t \in [0,T]$. It is clear that, since $\nu_t = \nu^{(2)}_{t - T_1}$ on $\{T_1 \leq t < T_2\}$, we have that $\rho_t = \rho^{(2)}_{t - T_1}$ on the same set, and hence $\rho_t$ admits density $p^{(2)}_{t - T_1}$ on $\{T_1 \leq t < T_2\}$.

Continuing in this manner, we construct a sequence of solutions $(\rho^{(n)})_{n \in \N}$ and corresponding density processes $(p^{(n)})_{n \in \N}$. We deduce that the unnormalized filtering process is represented by
\begin{equation*}
\rho_t = \sum_{n=1}^\infty \rho^{(n)}_{t-T_n} \ind_{T_{n-1} \leq t < T_n}, \quad t \in [0,T],
\end{equation*}
and hence is the unique $\bbY$-adapted, c\`adl\`ag, $\cM_+(\R^m)$-valued solution to the Zakai equation \eqref{eq:Zakai}, admitting a $\bbY$-adapted, c\`adl\`ag, $H$-valued density process $p$, given by
\begin{equation*}
p_t = \sum_{n=1}^\infty p^{(n)}_{t-T_n} \ind_{T_{n-1} \leq t < T_n}, \quad t \in [0,T].
\end{equation*}
The fact that $\bbE[\norm{p_t}_H^2] < +\infty$, for all $t \in [0,T]$, follows from the analogous property for each of the processes $p^{(n)}$, $n \in \N$.
\end{proof}

%%%%%%%%%%%%%%%%%%%%%%%%%%%%%%%%%%%%%%%%%%%%%%%%%%%%%%%%%%%

\appendix
\section{Techincal results}\label{app:technical}

Let us recall that if $A$ (defined on a given filtered complete probability space) is a c\`adl\`ag, adapted, non-negative process, with $A_{0^-} = 0$, and $H$ is an optional process, satisfying $\int_0^t \abs{H_s} \, \dd A_s < +\infty$, for all $t \geq 0$, $\bbP$-a.s., then for any stopping time $\tau$ we have that
\begin{equation*}
\int_0^{\tau^-} H_s \, \dd A_s \coloneqq \int_0^{+\infty} H_s \ind_{s < \tau} \, \dd A_s.
\end{equation*}

\begin{lemma}\label{lem:gronwall}
Let $(\Omega, \cF, \bbF, \bbP)$ be a given filtered complete probability space, fix $T > 0$, and let $A$ and $H$ be two c\`adl\`ag, $\bbF$-adapted real-valued processes. Suppose that $A$ is non-decreasing, with $A_{0^-} = 0$ and $A_T \leq K$, $\bbP$-a.s., for some constant $K > 0$, and that $H$ satisfies one of the following:
\begin{enumerate}[label=\alph*.]
\item\label{hyp:Hgeneral} $\bbE[\sup_{t \in [0,T]} |H_{t^-}|] < +\infty$;
\item\label{hyp:Hpos} $H$ is non-negative and such that $\bbE[H_{t^-}] < +\infty$, for all $t \in [0,T]$.
\end{enumerate}
Assume, moreover, that for any $\bbF$-stopping time $\tau \leq T$ we have
\begin{equation}\label{eq:gronwallest}
\bbE[H_{\tau^-}] \leq M + \bbE\left[\int_0^{\tau^-} H_{s^-} \, \dd A_s\right],
\end{equation}
for some constant $M$. Then $\bbE[H_{T^-}] \leq M \de^K$.
\end{lemma}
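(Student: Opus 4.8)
The plan is to apply a time change that turns the random Lebesgue--Stieltjes integral $\int_0^{\tau^-} H_{s^-}\,\dd A_s$ into an ordinary Lebesgue integral in the ``level'' variable of $A$, thereby reducing the stochastic estimate \eqref{eq:gronwallest} to a classical deterministic Gronwall inequality. The decisive feature of the hypothesis is that \eqref{eq:gronwallest} is assumed to hold \emph{for every} stopping time $\tau\le T$; this is exactly what will let me evaluate it along the level-crossing times of $A$ and close the argument. A naive pathwise iteration is not available here, since inside $\bbE[\int_0^{\tau^-} H_{s^-}\,\dd A_s]$ the integrand $H_{s^-}$ is correlated with the random measure $\dd A_s$ and cannot be replaced by its mean.

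\textbf{Main steps.} For $\ell\ge 0$ I would first introduce the level-crossing time $C_\ell\coloneqq\inf\{t\ge0\colon A_t\ge\ell\}$, which is an $\bbF$-stopping time because $A$ is c\`adl\`ag, non-decreasing and adapted, so that $\{C_\ell\le t\}=\{A_t\ge\ell\}\in\cF_t$. Setting $\rho_\ell\coloneqq C_\ell\wedge T$ and $u(\ell)\coloneqq\bbE[H_{\rho_\ell^-}]$, I apply \eqref{eq:gronwallest} with $\tau=\rho_\ell$ to get $u(\ell)\le M+\bbE[\int_0^{\rho_\ell^-} H_{s^-}\,\dd A_s]$. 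Next I rewrite that integral via the same change-of-time formula for Lebesgue--Stieltjes integrals employed in the proof of Theorem~\ref{th:Zakai}, namely $\int_0^\infty g_s\,\dd A_s=\int_0^\infty g_{C_a}\,\ind_{C_a<+\infty}\,\dd a$ applied to $g_s=H_{s^-}\ind_{s<\rho_\ell}$; together with Fubini--Tonelli this yields
\[
\bbE\Big[\int_0^{\rho_\ell^-} H_{s^-}\,\dd A_s\Big]=\int_0^{\infty}\bbE\big[H_{C_a^-}\,\ind_{C_a<\rho_\ell}\big]\,\dd a .
\]
Since $a\mapsto C_a$ is non-decreasing, $\ind_{C_a<C_\ell}$ vanishes for $a>\ell$, while on $\{C_a\ge T\}$ one has $H_{\rho_a^-}=H_{T^-}$; in the non-negative case~\ref{hyp:Hpos} these two observations bound the right-hand side by $\int_0^\ell\bbE[H_{\rho_a^-}]\,\dd a=\int_0^\ell u(a)\,\dd a$, giving
\[
u(\ell)\le M+\int_0^\ell u(a)\,\dd a,\qquad \ell\ge0 .
\]
This is a deterministic Gronwall inequality, so $u(\ell)\le M\de^{\ell}$.

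\textbf{Conclusion and the role of the two hypotheses.} To recover the claim I would use $A_T\le K$ $\bbP$-a.s.: for every $\ell>K$ one has $A_t\le A_T\le K<\ell$ on $[0,T]$, hence $C_\ell>T$, $\rho_\ell=T$, and therefore $\bbE[H_{T^-}]=u(\ell)\le M\de^{\ell}$; letting $\ell\downarrow K$ gives $\bbE[H_{T^-}]\le M\de^{K}$. The two integrability conditions enter only to legitimise the manipulations above: under~\ref{hyp:Hpos} non-negativity supplies Tonelli for free and forces the indicator bounds to go in the right direction, whereas under~\ref{hyp:Hgeneral} the domination $\int_0^{\rho_\ell^-}\abs{H_{s^-}}\,\dd A_s\le A_T\,\sup_{t}\abs{H_{t^-}}\le K\sup_{t}\abs{H_{t^-}}$ is integrable, which both justifies Fubini and shows $\abs{u(\ell)}\le\bbE[\sup_{t}\abs{H_{t^-}}]<+\infty$, so that the deterministic Gronwall step still applies to the (now signed) function $u$.

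\textbf{Main obstacle.} The delicate point is exactly the decoupling of $H_{s^-}$ from the random integrator $\dd A_s$: the time change is what makes the stopping-time form of \eqref{eq:gronwallest} usable, since it re-expresses the integral as an average of $H$ sampled precisely at the stopping times $C_a$. In the signed case~\ref{hyp:Hgeneral} I also expect to have to check that the ``gaps'' in the range of $A$ produced by its jumps (on which $a\mapsto C_a$ is constant) do not spoil the inequality $u(\ell)\le M+\int_0^\ell u(a)\,\dd a$; controlling those jump contributions, using that the gaps carry a prescribed $\dd a$-measure and the integrability furnished by~\ref{hyp:Hgeneral}, is where I anticipate the argument to require the most care.
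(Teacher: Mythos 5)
Your proposal is correct and takes essentially the same route as the paper's proof: both evaluate \eqref{eq:gronwallest} along the level-crossing stopping times of $A$, rewrite $\int_0^{\tau^-} H_{s^-}\,\dd A_s$ as a Lebesgue integral in the level variable via the Lebesgue--Stieltjes change-of-time formula, exploit monotonicity of the time change to bound the resulting indicator, and conclude with the deterministic Gronwall inequality. The differences are cosmetic and, if anything, in your favour: your device of capping at $\rho_\ell = C_\ell \wedge T$ and letting $\ell \downarrow K$ replaces the paper's substitution of $A$ by $\tilde A_t = A_t\ind_{t<T} + K\ind_{t\geq T}$ together with the claim $C_K = T$ (which can fail if $A$ reaches the level $K$ strictly before time $T$), while the signed-case subtlety you flag under hypothesis (a) is equally present, though unacknowledged, in the paper's proof, which applies the bound $\ind_{C_u<C_t}\leq\ind_{u<t}$ against a possibly negative integrand.
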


\begin{proof}
The following reasoning is inspired by the proof of \citep[Lemma~IX.6.3]{jacod2013:limit}. Let us define
\begin{equation*}
\tilde A_t \coloneqq A_t \ind_{t < T} + K \ind_{t \geq T}, \quad t \geq 0.
\end{equation*}
$\tilde A$ is still a c\`adl\`ag, $\bbF$-adapted and non-decreasing process, with $\tilde A_{0^-} = 0$. Moreover, for any stopping time $\tau \leq T$, random measures $\ind_{s < \tau} \, \dd A_s$ and $\ind_{s < \tau} \, \dd \tilde A_s$ agree, therefore \eqref{eq:gronwallest} implies
\begin{equation}\label{eq:gronwallest2}
\bbE[H_{\tau^-}] \leq M + \bbE\left[\int_0^{\tau^-} H_{s^-} \, \dd \tilde A_s\right].
\end{equation}
Next, define $C_t \coloneqq \inf\{s \geq 0 \colon \tilde A_s \geq t\}$, $t \geq 0$, which (see, e.g., \citep[Chapter~VI, Def. 56]{dellacheriemeyer:B} or \citep[Proposition~I.1.28]{jacod2013:limit}) is an $\bbF$-stopping time for all $t \geq 0$, satisfying $C_t \leq T$, thanks to the definition of $\tilde A$.

We now fix $t \in [0, K]$. Using \eqref{eq:gronwallest2}, we get
\begin{equation*}
\bbE[H_{(C_t)^-}] \leq M + \bbE\left[\int_0^{+\infty} H_{s^-} \ind_{s < C_t} \, \dd \tilde A_s\right] = M + \bbE\left[\int_0^{+\infty} H_{(C_u)^-} \ind_{C_u < C_t} \, \dd u\right].
\end{equation*}
Since $C$ is a non-decreasing process, we have that $\{C_u < C_t\} \subset \{u < t\}$, and hence $\ind_{C_u < C_t} \leq \ind_{u < t}$. Therefore
\begin{equation*}
\bbE[H_{(C_t)^-}] \leq M + \bbE\left[\int_0^t H_{(C_u)^-} \, \dd u\right].
\end{equation*}
If $H$ satisfies condition \ref{hyp:Hpos} we can directly apply Fubini-Tonelli's theorem as below. If, instead, condition \ref{hyp:Hgeneral} holds, since $C_u \leq T$ and, for each fixed $\omega \in \Omega$, the image of the map $u \mapsto C_u(\omega)$ is a subset of $[0,T]$, we have that $\sup_{u \in [0,K]} |H_{(C_u)^-}| \leq \sup_{s \in [0,T]} |H_{s^-}|$, so
\begin{equation*}
\bbE\left[\int_0^t |H_{(C_u)^-}| \, \dd u\right] \leq K \, \bbE[\sup_{s \in [0,T]} |H_{s^-}|] < +\infty.
\end{equation*}
Therefore, we can apply Fubini-Tonelli's theorem and get
\begin{equation*}
\bbE[H_{(C_t)^-}] \leq M + \int_0^t \bbE[H_{(C_u)^-}] \, \dd u,
\end{equation*}
whence we obtain, from the usual Gronwall's lemma, $\bbE[H_{(C_t)^-}] \leq M \de^t$. Thanks to the definition of $\tilde A$, we have that $C_K = T$ and the claim follows letting $t = K$ in the last inequality. 
\end{proof}

%%%%%%%%%%%%%%%%%%%

\begin{proposition}\label{prop:etamart}
Under Assumptions \ref{hyp:main} and \ref{hyp:h}, the process $\eta$, defined in \eqref{eq:eta}, is a $(\bbP,\bbF)$-martingale.
\end{proposition}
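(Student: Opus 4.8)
The plan is to recognise $\eta$ as the stochastic (Dol\'eans-Dade) exponential of the continuous local martingale $N_t \coloneqq \int_0^t \gamma^{-1}(s) h(s,X_s) \, \dd \overline B_s$, so that $\eta$ is automatically a non-negative local martingale and hence a supermartingale with $\bbE[\eta_t] \leq \eta_0 = 1$ for every $t$. Since a non-negative supermartingale whose expectation stays constant equal to its initial value is a martingale, the whole proof reduces to establishing $\bbE[\eta_t] = 1$ for each fixed $t \in [0,T]$. Global Novikov is hopeless here, because $\norm{\gamma^{-1}(s)h(s,X_s)}^2$ grows quadratically in $X_s$ by \eqref{eq:hlin} and \eqref{eq:invgammanorm}, whereas only polynomial moments of $\sup_t \norm{X_t}$ are available from \eqref{eq:Xestimate}. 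First I would therefore localise via the stopping times $\tau_n \coloneqq \inf\{t \geq 0 \colon \int_0^t \norm{\gamma^{-1}(s)h(s,X_s)}^2 \, \dd s \geq n\}$, which increase to $+\infty$ a.s. because $\int_0^T \norm{\gamma^{-1}(s)h(s,X_s)}^2 \, \dd s < +\infty$ a.s. by \eqref{eq:gammainvhintegr}. On each stopped process Novikov holds trivially, since $\bbE[\exp\{\tfrac12 \int_0^{T \wedge \tau_n} \norm{\gamma^{-1}(s)h(s,X_s)}^2 \, \dd s\}] \leq \de^{n/2} < +\infty$, so $\eta^{\tau_n}$ is a genuine $(\bbP,\bbF)$-martingale and $\bbE[\eta_{t \wedge \tau_n}] = 1$ for all $n$ and $t$.

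The heart of the argument is a second-moment bound for $X$ that is \emph{uniform in $n$} under the tilted measures $\widetilde\bbP_n$ defined on $\cF_t$ by $\dd\widetilde\bbP_n/\dd\bbP = \eta_{t\wedge\tau_n}$. Two structural features are crucial. First, because $W$ and $\overline B$ are independent, the covariation of $W$ with $N$ vanishes, so Girsanov's theorem leaves $W$ a Brownian motion under every $\widetilde\bbP_n$. Second, the pathwise finite-fuel bound \eqref{eq:nufinitefuel} on the total variation of $\nu$ is measure-independent, hence still valid $\widetilde\bbP_n$-a.s. Consequently the strong solution $X$ of \eqref{eq:XSDE} is, under $\widetilde\bbP_n$, an It\^o process driven by the $\widetilde\bbP_n$-Brownian motion $W$ plus a finite-variation term obeying the same bound, so the a priori estimate \eqref{eq:Xestimate} of Remark~\ref{rem:XSDE} can be re-run verbatim. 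Since $\widetilde\bbP_n$ agrees with $\bbP$ on $\cF_0$, the initial law of $X_{0^-}$ is still $\xi$ and \eqref{eq:xisq} gives $\widetilde\bbE_n[\norm{X_{0^-}}^2] = \int_{\R^m}\norm{x}^2\,\xi(\dd x) < +\infty$. I would thus obtain $\sup_n \widetilde\bbE_n[\sup_{s \in [0,T]}\norm{X_s}^2] \leq \kappa(1 + \int_{\R^m}\norm{x}^2\,\xi(\dd x))$ with $\kappa$ depending only on $T,b,\sigma$ and $K$, not on $n$.

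To finish, decompose $1 = \bbE[\eta_{t\wedge\tau_n}] = \bbE[\eta_t\ind_{\tau_n \geq t}] + \bbE[\eta_{\tau_n}\ind_{\tau_n < t}]$. The first term converges to $\bbE[\eta_t]$ as $n \to \infty$ by dominated convergence ($\ind_{\tau_n \geq t} \uparrow 1$ and $\eta_t$ is integrable), so it remains to show the second term vanishes. Since $\{\tau_n < t\} \in \cF_{\tau_n}$ and $\eta_{t\wedge\tau_n} = \eta_{\tau_n}$ there, one has $\bbE[\eta_{\tau_n}\ind_{\tau_n < t}] = \widetilde\bbP_n(\tau_n < t)$; as the integrand defining $\tau_n$ is non-negative, $\{\tau_n < t\} \subseteq \{\int_0^t \norm{\gamma^{-1}(s)h(s,X_s)}^2\,\dd s \geq n\}$, and Chebyshev's inequality under $\widetilde\bbP_n$ together with \eqref{eq:hlin}, \eqref{eq:invgammanorm} and the uniform moment bound yields
\begin{equation*}
\widetilde\bbP_n(\tau_n < t) \leq \frac{1}{n}\,\widetilde\bbE_n\biggl[\int_0^t \norm{\gamma^{-1}(s)h(s,X_s)}^2\,\dd s\biggr] \leq \frac{C}{n} \xrightarrow[n\to\infty]{} 0,
\end{equation*}
with $C$ independent of $n$. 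Hence $\bbE[\eta_t] = 1$ for every $t \in [0,T]$, and $\eta$ is a martingale. The main obstacle, and the step deserving the most care, is precisely this uniform-in-$n$ moment bound: it succeeds only because the change of measure acts on $\overline B$ while $X$ is driven by the orthogonal noise $W$, so that the tilting preserves \eqref{eq:Xestimate}; establishing the independence/orthogonality and the measure-invariance of \eqref{eq:nufinitefuel} is what makes the Chebyshev estimate uniform and drives the conclusion.
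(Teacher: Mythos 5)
Your proof is correct, but it takes a genuinely different route from the paper's. Both arguments make the same initial reduction: $\eta = \cE\bigl(\int_0^\cdot \gamma^{-1}(s)h(s,X_s)\,\dd \overline B_s\bigr)$ is a non-negative local martingale, hence a supermartingale, so everything hinges on $\bbE[\eta_t]=1$. The paper proves this identity by direct computation under $\bbP$: it truncates (working with $\eta_t\norm{X_t}^2/(1+\epsilon\eta_t\norm{X_t}^2)$ and $\eta_t/(1+\epsilon\eta_t)$), applies It\^o's formula and optional sampling, controls the Lebesgue--Stieltjes integrals against $\dd\nu$ and the jump sums via change-of-time arguments, invokes its bespoke Gronwall-type Lemma~\ref{lem:gronwall} to obtain the key uniform bound $\bbE[\eta_t\norm{X_{t^-}}^2]\le M\de^{M(T+mK)}$, and then concludes by dominated convergence as $\epsilon\to 0$. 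You instead localize with $\tau_n$, obtain $\bbE[\eta_{t\wedge\tau_n}]=1$ from Novikov on the stopped process, and kill the error term $\bbE[\eta_{\tau_n}\ind_{\tau_n<t}]=\widetilde\bbP_n(\tau_n<t)$ by Chebyshev under the tilted measures; the structural observations that carry the argument — Girsanov only shifts $\overline B$ and leaves the independent noise $W$ a Brownian motion, the finite-fuel bound \eqref{eq:nufinitefuel} is a pathwise statement preserved under equivalent measures, and $\widetilde\bbP_n=\bbP$ on $\cF_0$ — are exactly what make the moment estimate \eqref{eq:Xestimate} measure-robust, hence uniform in $n$, and there is no circularity since \eqref{eq:invgammanorm} and \eqref{eq:gammainvhintegr} do not presuppose the martingale property. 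Your route is shorter and more conceptual: the jumps of $\nu$ and the Stieltjes calculus never need to be touched, because $\nu$ enters only through the pathwise bound $K$. What the paper's longer computation buys is its byproducts: the bound on $\bbE[\eta_t\norm{X_{t^-}}^2]$, the estimates \eqref{eq:etanuest}--\eqref{eq:etanuest2} on $\bbE[\int_0^{\tau^-}\eta_{s^-}\,\dd\abs{\nu^i}_s]$, and Lemma~\ref{lem:gronwall} itself are all reused later (in Step 3 of the proof of Theorem~\ref{th:Zakai} and in Section~\ref{sec:uniqueness}), so the paper cannot dispense with that machinery anyway. One small presentational fix to your write-up: since you take $\widetilde\bbE_n[\sup_{s\in[0,T]}\norm{X_s}^2]$, define $\widetilde\bbP_n$ on $\cF_T$ with density $\eta_{T\wedge\tau_n}$ (optional sampling still yields $\bbE[\eta_{T\wedge\tau_n}\ind_{\tau_n<t}]=\bbE[\eta_{\tau_n}\ind_{\tau_n<t}]$), or else restrict the supremum to $[0,t]$; as written, the density on $\cF_t$ does not define the measure on the $\sigma$-algebra where that supremum lives.
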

\begin{proof}
%[Proof of Proposition~\ref{prop:etamart}]
Let us notice, first, a fact that will be useful in this proof. It can be easily shown that condition \eqref{eq:gammaunifpd} implies, for some constant $C_\gamma$,
\begin{equation}\label{eq:invgammanorm}
\norm{\gamma^{-1}(t)} \leq C_\gamma, \quad \forall t \in [0,T].
\end{equation}

Let us define, for all $t \in [0,T]$, 
\begin{equation*}
Z_t \coloneqq \int_0^t \gamma^{-1}(s) h(s, X_s) \, \dd \overline B_s.
\end{equation*}
Thanks to condition \eqref{eq:hlin} and using \eqref{eq:invgammanorm} and \eqref{eq:Xestimate}, we easily get
\begin{align}
&\phantom{\mathop{\leq}} \bbE\left[\int_0^T \norm{\gamma^{-1}(s) h(s, X_s)}^2 \, \dd s\right]
\leq n \bbE\left[\int_0^T \norm{\gamma^{-1}(s)}^2 \norm{h(s, X_s)}^2 \, \dd s\right] \notag
\\
&\leq n C_h C_\gamma \bbE\left[\int_0^T (1+\norm{X_s}^2) \, \dd s\right]
\leq n C_h C_\gamma T[1 + \kappa(1+ \bbE[\norm{X_{0^-}}^2])] < +\infty. \label{eq:gammainvhintegr}
\end{align}
Therefore, $Z$ is an $(\bbF,\bbP)$-martingale, and hence $\eta$, which is the Doléans-Dade exponential of $Z$, is a non-negative local $(\bbF,\bbP)$-martingale (see, e.g., \citep[Lemma~15.3.2]{cohen:stochcalculus}). Thus, to prove the claim it is enough to show that $\bbE[\eta_t]=1$ for all $t \in [0,T]$.

We start proving, first, that $\bbE[\eta_t \norm{X_{t^-}}^2] \leq C$, for all $t \in [0,T]$, where $C$ is an appropriately chosen constant. For the sake of brevity, let us write $b_s \coloneqq b(s, X_s)$, $\sigma_s \coloneqq \sigma(s, X_s)$, and $h_s \coloneqq h(s, X_s)$.
Applying It\^o's formula we get
\begin{align*}
\norm{X_t}^2 
&= \norm{X_{0^-}}^2 + \int_0^t \left[2X_{s^-}^* b_s + \norm{\sigma_s}^2\right] \, \dd s + 2\int_0^t X_{s^-}^* \sigma_s \, \dd W_s
\\
&+ 2\int_0^t X_{s^-} \, \dd \nu_s + \sum_{0 \leq s \leq t} \{\norm{X_s}^2 - \norm{X_{s^-}}^2 - 2 X_{s^-} \cdot \Delta \nu_s\},
\end{align*}
and using the integration by parts rule we have
\begin{align*}
\eta_t \norm{X_t}^2 
&= \norm{X_{0^-}}^2 + \int_0^t \left[2\eta_{s^-} X_{s^-}^* b_s + \eta_{s^-}\norm{\sigma_s}^2\right] \, \dd s + 2\int_0^t \eta_{s^-} X_{s^-}^* \sigma_s \, \dd W_s
\\
& + \int_0^t \norm{X_{s^-}}^2 \eta_s \gamma^{-1}(s) h_s \, \dd \overline B_s + 2 \int_0^t \eta_{s^-} X_{s^-} \, \dd \nu_s 
\\
&+ \sum_{0 \leq s \leq t} \eta_{s^-} \{\norm{X_s}^2 - \norm{X_{s^-}}^2 - 2 X_{s^-} \cdot \Delta \nu_s\}.
\end{align*}
Therefore, for any fixed $\epsilon > 0$, we obtain
\begin{align}
&\mathop{\phantom{+}}\dfrac{\eta_t \norm{X_t}^2}{1+\epsilon \eta_t \norm{X_t}^2}
= \dfrac{\norm{X_{0^-}}^2}{1+\epsilon \norm{X_{0^-}}^2} + \int_0^t \dfrac{\eta_{s^-}}{[1+\epsilon \eta_{s^-} \norm{X_{s^-}}^2]^2}\left[2 X_{s^-}^* b_s + \norm{\sigma_s}^2\right] \, \dd s \notag
\\
&- \int_0^t \dfrac{\epsilon \eta_{s^-}^2}{[1+\epsilon \eta_{s^-} \norm{X_{s^-}}^2]^3}\left[4 \norm{X_{s^-}^* \sigma_s}^2 + \norm{X_{s^-}}^4 \norm{\gamma^{-1}(s) h_s}^2\right] \, \dd s \notag
\\
&+ \int_0^t \dfrac{2\eta_{s^-}}{[1+\epsilon \eta_{s^-} \norm{X_{s^-}}^2]^2} X_{s^-} \, \dd \nu^c_s
+\int_0^t \dfrac{2\eta_{s^-}}{[1+\epsilon \eta_{s^-} \norm{X_{s^-}}^2]^2} X_{s^-}^* \sigma_s \, \dd W_s \notag
\\
&+ \int_0^t \dfrac{\eta_{s^-}\norm{X_{s^-}}^2}{[1+\epsilon \eta_{s^-} \norm{X_{s^-}}^2]^2} \gamma^{-1}(s) h_s \, \dd \overline B_s
%\notag
%\\
%&
+ \sum_{0 \leq s \leq t}\biggl\{\dfrac{\eta_s \norm{X_s}^2}{1+\epsilon \eta_s \norm{X_s}^2} - \dfrac{\eta_{s^-} \norm{X_{s^-}}^2}{1+\epsilon \eta_{s^-} \norm{X_{s^-}}^2}\biggr\}, \label{eq:etanormXdiff}
\end{align}
where $\nu^c$ denotes the continuous part of the process $\nu$.

With standard estimates (see, e.g., \citep[Solution to Exercise~3.11]{bain:fundofstochfilt}) it is possible to show that the stochastic integrals with respect to Brownian motions $W$ and $\overline B$ are $(\bbF, \bbP)$-martingales. This implies, thanks to the optional sampling theorem, that these stochastic integrals have zero expectation even when evaluated at any bounded stopping time. Fixing a $\bbF$-stopping time $\tau \leq t$, for arbitrary $t \in [0,T]$, taking the expectation and noticing that the third term in \eqref{eq:etanormXdiff} is non-negative, we get
\begin{align}
&\bbE\left[\dfrac{\eta_{\tau^-} \norm{X_{\tau^-}}^2}{1+\epsilon \eta_{\tau^-} \norm{X_{\tau^-}}^2}\right]
\leq \bbE\left[\dfrac{\norm{X_{0^-}}^2}{1+\epsilon \norm{X_{0^-}}^2}\right]
\notag
\\
+ &\bbE\biggl[\int_0^{\tau^-} \dfrac{\eta_{s^-}\left[2 X_{s^-}^* b_s + \norm{\sigma_s}^2\right]}{[1+\epsilon \eta_{s^-} \norm{X_{s^-}}^2]^2} \, \dd s\biggr] \notag
%\\
+ \bbE\biggl[\int_0^{\tau^-} \dfrac{2\eta_{s^-}X_{s^-} }{[1+\epsilon \eta_{s^-} \norm{X_{s^-}}^2]^2} \, \dd \nu^c_s\biggr]
% \notag
\\
%&
+ &\bbE\biggl[\,\sum_{0 \leq s < \tau}\biggl\{\dfrac{\eta_s \norm{X_s}^2}{1+\epsilon \eta_s \norm{X_s}^2} - \dfrac{\eta_{s^-} \norm{X_{s^-}}^2}{1+\epsilon \eta_{s^-} \norm{X_{s^-}}^2}\biggr\}\biggr]. \label{eq:etanormXmean}
\end{align}
We proceed, now, to find suitable estimates for the terms appearing in \eqref{eq:etanormXmean}.

Notice that, thanks to conditions \eqref{eq:blip} and \eqref{eq:sigmalip}, we have that for some constant $C_1$
\begin{equation*}
\left|2 X_{s^-}^* b_s + \norm{\sigma_s}^2\right| \leq C_1(1+\norm{X_{s^-}}^2), \quad \bbP\text{-a.s.}, \quad s \in [0,T],
\end{equation*}
Recalling that $\eta$ is non-negative and that $\bbE[\eta_t] \leq 1$, for any $t \in [0,T]$, we get
\begin{align}
&\mathop{\phantom{\leq}} \bbE\biggl[\int_0^{\tau^-} \dfrac{\eta_{s^-}\left[2 X_{s^-}^* b_s + \norm{\sigma_s}^2\right]}{[1+\epsilon \eta_{s^-} \norm{X_{s^-}}^2]^2} \, \dd s\biggr]
% \notag
%\\
%&
\leq C_1 \bbE\biggl[\int_0^{\tau^-} \dfrac{\eta_{s^-}(1+\norm{X_{s^-}}^2)}{[1+\epsilon \eta_{s^-} \norm{X_{s^-}}^2]^2} \, \dd s \biggr]
%+ C_1 \bbE\left[\int_0^{\tau^-} \dfrac{\eta_{s^-}\norm{X_{s^-}}^2}{[1+\epsilon \eta_{s^-} \norm{X_{s^-}}^2]^2} \, \dd s \right]
 \notag
\\
&\leq C_1 \bbE\biggl[\int_0^{\tau^-} \eta_{s^-} \, \dd s \biggr]
+ C_1 \bbE\biggl[\int_0^{\tau^-} \dfrac{\eta_{s^-}\norm{X_{s^-}}^2}{1+\epsilon \eta_{s^-} \norm{X_{s^-}}^2} \, \dd s \biggr]
\notag
\\
&\leq C_1 T
+ C_1 \bbE\biggl[\int_0^{\tau^-}\dfrac{\eta_{s^-}\norm{X_{s^-}}^2}{1+\epsilon \eta_{s^-} \norm{X_{s^-}}^2} \, \dd s \biggr]. \label{eq:etaXdriftest}
\end{align}
%\begin{align}
%&\mathop{\phantom{\leq}} \bbE\left[\int_0^{\tau^-} \dfrac{\eta_{s^-}}{[1+\epsilon \eta_{s^-} \norm{X_{s^-}}^2]^2}\left[2 X_{s^-}^* b_s + \norm{\sigma_s}^2\right] \, \dd s\right] \notag
%\\
%&\leq C_1 \bbE\left[\int_0^{\tau^-} \dfrac{\eta_{s^-}}{[1+\epsilon \eta_{s^-} \norm{X_{s^-}}^2]^2} \, \dd s \right]
%+ C_1 \bbE\left[\int_0^{\tau^-} \dfrac{\eta_{s^-}\norm{X_{s^-}}^2}{[1+\epsilon \eta_{s^-} \norm{X_{s^-}}^2]^2} \, \dd s \right] \notag
%\\
%&\leq C_1 \bbE\left[\int_0^{\tau^-} \eta_{s^-} \, \dd s \right]
%+ C_1 \bbE\left[\int_0^{\tau^-} \dfrac{\eta_{s^-}\norm{X_{s^-}}^2}{1+\epsilon \eta_{s^-} \norm{X_{s^-}}^2} \, \dd s \right] \notag
%\\
%&\leq C_1 T
%+ C_1 \bbE\left[\int_0^{\tau^-}\dfrac{\eta_{s^-}\norm{X_{s^-}}^2}{1+\epsilon \eta_{s^-} \norm{X_{s^-}}^2} \, \dd s \right]. \label{eq:etaXdriftest}
%\end{align}

Next, we see that
\begin{align}
&\mathop{\phantom{=}} \bbE\biggl[\int_0^{\tau^-} \dfrac{2\eta_{s^-} X_{s^-} }{[1+\epsilon \eta_{s^-} \norm{X_{s^-}}^2]^2}\, \dd \nu^c_s\biggr]
= \sum_{i=1}^m \bbE\biggl[\int_0^{\tau^-} \dfrac{2\eta_{s^-} X_{s^-}^i }{[1+\epsilon \eta_{s^-} \norm{X_{s^-}}^2]^2} \, \dd \nu^{i,c}_s\biggr] 
\notag
\\
&\leq \sum_{i=1}^m \bbE\biggl[\int_0^{\tau^-} \dfrac{2\eta_{s^-} \abs{X_{s^-}^i} }{[1+\epsilon \eta_{s^-} \norm{X_{s^-}}^2]^2} \, \dd \abs{\nu^{i,c}}_s\biggr]
\leq \sum_{i=1}^m \bbE\biggl[\int_0^{\tau^-} \dfrac{\eta_{s^-}(1+|X_{s^-}^i|^2)}{[1+\epsilon \eta_{s^-} \norm{X_{s^-}}^2]^2} \, \dd \abs{\nu^{i,c}}_s\biggr] \notag
\\
&\leq \sum_{i=1}^m \bbE\biggl[\int_0^{\tau^-} \eta_{s^-} \, \dd \abs{\nu^{i,c}}_s\biggr]
+ \sum_{i=1}^m \bbE\biggl[\int_0^{\tau^-} \dfrac{\eta_{s^-}\norm{X_{s^-}}^2}{1+\epsilon \eta_{s^-} \norm{X_{s^-}}^2} \, \dd \abs{\nu^{i,c}}_s\biggr]. \label{eq:etaXnuest}
\end{align}
Similarly to what we did in the proof of Lemma~\ref{lem:gronwall}, let us define
\begin{equation*}
\tilde \nu^i_t \coloneqq \abs{\nu^i}_t \ind_{t < T} + K \ind_{t \geq T}, \quad t \geq 0, \quad i = 1, \dots, m.
\end{equation*}
For each $i = 1, \dots, m$, $\tilde \nu^i$ is a $\bbY$-adapted, c\`adl\`ag, non-decreasing process, with $\tilde \nu^i_{0^-}=0$. Moreover, random measures $\ind_{s < \tau} \, \dd \abs{\nu^i}_s$ and $\ind_{s < \tau} \, \dd \tilde \nu^i_s$ agree, therefore
\begin{equation*}
\bbE\biggl[\int_0^{\tau^-} \eta_{s^-} \, \dd \abs{\nu^i}_s\biggr] = \bbE\biggl[\int_0^{\tau^-} \eta_{s^-} \, \dd \tilde \nu^i_s\biggr], \quad i = 1, \dots, m,
\end{equation*}
and, in particular,
\begin{equation*}
\bbE\biggl[\int_0^{\tau^-} \eta_{s^-} \, \dd \abs{\nu^{i,c}}_s\biggr] = \bbE\biggl[\int_0^{\tau^-} \eta_{s^-} \, \dd \tilde \nu^{i,c}_s\biggr], \quad i = 1, \dots, m.
\end{equation*}
Let us define the changes of time $C^i_t \coloneqq \inf\{s \geq 0 \colon \tilde \nu^i_s \geq t\}$, for all $t \geq 0$ and all $i = 1, \dots, m$. Then, noticing that $\{C_s^i \leq t\} = \{\tilde \nu_t^i \geq s\}$ and recalling that $\eta$ is non-negative and $\tilde \nu^i_T = K$, we get
\begin{align*}
&\mathop{\phantom{=}}\bbE\biggl[\int_0^{\tau^-} \eta_{s^-} \, \dd \tilde \nu^{i,c}_s\biggr]
\leq  \bbE\left[\int_0^T \eta_{s^-} \, \dd \tilde \nu^{i,c}_s\right]
\leq  \bbE\left[\int_0^T \eta_{s^-} \, \dd \tilde \nu^i_s\right]
= \bbE\left[\int_0^{+\infty} \eta_{(C^i_s)^-} \ind_{C_s^i \leq T} \, \dd s\right]
\\
&= \bbE\left[\int_0^{+\infty} \eta_{(C^i_s)^-} \ind_{s \leq \tilde \nu_T^i} \, \dd s\right]
= \bbE\left[\int_0^K \eta_{(C^i_s)^-} \, \dd s\right]
=\int_0^K \bbE[\eta_{(C^i_s)^-}] \, \dd s.
\end{align*}
Since $\bbE[\eta_t] \leq 1$, for any $t \in [0,T]$, and $C^i_s \leq T$, for all $s \in [0,K]$, we get that
\begin{equation}\label{eq:etanuest}
\bbE\biggl[\int_0^{\tau^-} \eta_{s^-} \, \dd \abs{\nu^{i,c}}_s\biggr] = \bbE\biggl[\int_0^{\tau^-} \eta_{s^-} \, \dd \tilde \nu^{i,c}_s\biggr] \leq K, \quad i = 1, \dots, m.
\end{equation}
Similarly, we obtain also
\begin{equation}\label{eq:etanuest2}
\bbE\biggl[\int_0^{\tau^-} \eta_{s^-} \, \dd \abs{\nu^i}_s\biggr] = \bbE\biggl[\int_0^{\tau^-} \eta_{s^-} \, \dd \tilde \nu^i_s\biggr] \leq K, \quad i = 1, \dots, m.
\end{equation}
Therefore, putting together \eqref{eq:etaXnuest} and \eqref{eq:etanuest} we obtain
\begin{equation}\label{eq:etaXnuest2}
\bbE\biggl[\int_0^{\tau^-} \dfrac{2\eta_{s^-}}{[1+\epsilon \eta_{s^-} \norm{X_{s^-}}^2]^2} X_{s^-} \, \dd \nu^c_s\biggr]
\leq mK + \sum_{i=1}^m \bbE\biggl[\int_0^{\tau^-} \dfrac{\eta_{s^-}\norm{X_{s^-}}^2}{1+\epsilon \eta_{s^-} \norm{X_{s^-}}^2} \, \dd \abs{\nu^{i,c}}_s\biggr]
\end{equation}

We are left with estimating the last term of \eqref{eq:etanormXmean}. We have:
\begin{align*}
&\mathop{\phantom{=}} \bbE\biggl[\,\sum_{0 \leq s < \tau}\biggl\{\dfrac{\eta_s \norm{X_s}^2}{1+\epsilon \eta_s \norm{X_s}^2} - \dfrac{\eta_{s^-} \norm{X_{s^-}}^2}{1+\epsilon \eta_{s^-} \norm{X_{s^-}}^2}\biggr\}\biggr]
\leq \bbE\biggl[\,\sum_{0 \leq s < \tau}\biggl\{\dfrac{\eta_{s^-}( \norm{X_s}^2 - \norm{X_{s^-}}^2)}{1+\epsilon \eta_{s^-} \norm{X_{s^-}}^2}\biggr\}\biggr]
\\
&= \bbE\biggl[\,\sum_{0 \leq s < \tau}\biggl\{\dfrac{\eta_{s^-}(\norm{\Delta \nu_s}^2 + 2 X_{s^-} \cdot \Delta \nu_s)}{1+\epsilon \eta_{s^-} \norm{X_{s^-}}^2} \biggr\}\biggr]
%\leq \bbE\left[\sum_{0 \leq s < \tau}\sum_{i=1}^m \dfrac{\eta_{s^-} \abs{(\Delta \nu^i_s + 2X_{s^-}^i)\Delta \nu^i_s}}{1+\epsilon \eta_{s^-} \norm{X_{s^-}}^2} \right]
%\\
\leq \bbE\biggl[\,\sum_{0 \leq s < \tau}\sum_{i=1}^m \left\{\dfrac{\eta_{s^-}(\abs{\Delta \nu^i_s} + 1 + \abs{X_{s^-}^i}^2)}{1+\epsilon \eta_{s^-} \norm{X_{s^-}}^2}\abs{\Delta \nu^i_s} \right\} \biggr],
\end{align*}
where we used the fact that $\eta$ is continuous. Since all quantities in the last term are non negative and $\abs{\Delta \nu^i_s} \leq K$, for all $s \in [0,T]$ and all $i = 1, \dots, m$, $\bbP$-a.s., we get that
\begin{align}
&\mathop{\phantom{\leq}} \bbE\biggl[\,\sum_{0 \leq s < \tau}\sum_{i=1}^m \left\{\dfrac{\eta_{s^-}(\abs{\Delta \nu^i_s} + 1 + \abs{X_{s^-}^i}^2)}{1+\epsilon \eta_{s^-} \norm{X_{s^-}}^2}\abs{\Delta \nu^i_s} \right\} \biggr] \notag
\\
&\leq (1+K) \sum_{i=1}^m \bbE\biggl[\sum_{0 \leq s < \tau} \eta_{s^-} \abs{\Delta \nu^i_s} \biggr] + \sum_{i=1}^m \bbE\biggl[\,\sum_{0 \leq s < \tau}  \left\{\dfrac{\eta_{s^-}\norm{X_{s^-}}^2}{1+\epsilon \eta_{s^-} \norm{X_{s^-}}^2}\abs{\Delta \nu^i_s} \right\} \biggr] \notag
\\
&\leq (1+K) \sum_{i=1}^m \bbE\biggl[\int_0^{\tau^-} \eta_{s^-} \dd \abs{\nu^i}_s \biggr] + \sum_{i=1}^m \bbE\biggl[\,\sum_{0 \leq s < \tau}  \left\{\dfrac{\eta_{s^-}\norm{X_{s^-}}^2}{1+\epsilon \eta_{s^-} \norm{X_{s^-}}^2}\Delta \abs{\nu^i}_s \right\} \biggr] \notag
\\
&\leq mK(1+K) + \sum_{i=1}^m \bbE\biggl[\,\sum_{0 \leq s < \tau}  \left\{\dfrac{\eta_{s^-}\norm{X_{s^-}}^2}{1+\epsilon \eta_{s^-} \norm{X_{s^-}}^2}\Delta \abs{\nu^i}_s \right\} \biggr], \label{eq:etaXjumpest}
\end{align}
where we used \eqref{eq:etanuest2} and the fact that $\abs{\Delta \nu^i} = \Delta \abs{\nu^i}$.

Therefore, feeding \eqref{eq:etaXdriftest}, \eqref{eq:etaXnuest2}, and \eqref{eq:etaXjumpest} back into \eqref{eq:etanormXmean}, we obtain
\begin{align}
&\mathop{\phantom{\leq}} \bbE\left[\dfrac{\eta_{\tau^-} \norm{X_{\tau^-}}^2}{1+\epsilon \eta_{\tau^-} \norm{X_{\tau^-}}^2}\right]
\leq M\biggl\{1 + \bbE\biggl[\int_0^{\tau^-} \dfrac{\eta_{s^-}\norm{X_{s^-}}^2}{1+\epsilon \eta_{s^-} \norm{X_{s^-}}^2} \dd A_s \biggr]\biggr\},
\end{align}
%\begin{align}
%&\mathop{\phantom{\leq}} \bbE\left[\dfrac{\eta_{\tau^-} \norm{X_{\tau^-}}^2}{1+\epsilon \eta_{\tau^-} \norm{X_{\tau^-}}^2}\right]
%\leq \bbE[\norm{X_{0^-}}^2] + C_1 T + mK + mK(1+K) \notag
%\\ 
%&\qquad + C_1 \bbE\left[\int_0^{\tau^-}\dfrac{\eta_{s^-}\norm{X_{s^-}}^2}{1+\epsilon \eta_{s^-} \norm{X_{s^-}}^2} \, \dd s \right] + \sum_{i=1}^m \bbE\left[\int_0^{\tau^-} \dfrac{\eta_{s^-}\norm{X_{s^-}}^2}{1+\epsilon \eta_{s^-} \norm{X_{s^-}}^2} \dd \abs{\nu^i}_s \right] \notag
%\\
%&\leq M\left\{1 + \bbE\left[\int_0^{\tau^-} \dfrac{\eta_{s^-}\norm{X_{s^-}}^2}{1+\epsilon \eta_{s^-} \norm{X_{s^-}}^2} \dd A_s \right]\right\},
%\end{align}
where $M$ is a suitable constant, not depending on $\epsilon$, and $A$ is the process
\begin{equation*}
A_t \coloneqq t + \sum_{i=1}^m \abs{\nu^i}_t, \quad t \in [0,T].
\end{equation*}
Clearly, $A$ is a c\`adl\`ag, $\bbY$- (and hence $\bbF$-) adapted, non-negative process, with $A_{0^-} = 0$ and $A_T \leq T+mK$. Moreover, $\frac{\eta_{t^-}\norm{X_{t^-}}^2}{1+\epsilon \eta_{t^-} \norm{X_{t^-}}^2} \leq \frac 1\epsilon$, for all $t \in [0,T]$, $\bbP$-a.s.
Therefore, we can apply Lemma~\ref{lem:gronwall} and obtain
\begin{equation*}
\bbE\left[\dfrac{\eta_{t^-} \norm{X_{t^-}}^2}{1+\epsilon \eta_{t^-} \norm{X_{t^-}}^2}\right] \leq M \de^{M(T+mK)}.
\end{equation*}
Recalling that $\eta$ is continuous we get, applying Fatou's lemma,
\begin{equation}\label{eq:etanormXest}
\bbE[\eta_t \norm{X_{t^-}}^2] = \bbE\left[\lim_{\epsilon \to 0} \dfrac{\eta_t \norm{X_{t^-}}^2}{1+\epsilon \eta_t \norm{X_{t^-}}^2}\right]
\leq \liminf_{\epsilon \to 0} \bbE\left[\dfrac{\eta_t \norm{X_{t^-}}^2}{1+\epsilon \eta_t \norm{X_{t^-}}^2}\right] \leq M \de^{M(T+mK)}.
\end{equation} 
It is important to stress that \eqref{eq:etanormXest} holds for any $t \in [0,T]$, since $t$ was arbitrarily chosen.

Now we can finally obtain that $\bbE[\eta_t]=1$, for all $t \in [0,T]$. By It\^o's formula, for an arbitrarily fixed $\epsilon > 0$ and all $t \in [0,T]$,
\begin{equation*}
\dfrac{\eta_t}{1+\epsilon \eta_t} = \dfrac{1}{1+\epsilon} - \int_0^t \dfrac{\epsilon \eta_s^2}{(1+\epsilon \eta_s)^3} \norm{\gamma^{-1}(s) h(s, X_s)}^2 \, \dd s + \int_0^t \dfrac{\eta_s}{(1+\epsilon\eta_s)^2} \gamma^{-1}(s) h(s, X_s) \, \dd \overline B_s.
\end{equation*}
Thanks to conditions \eqref{eq:hlin} and \eqref{eq:gammaunifpd}, standard computations show that the stochastic integral is a $(\bbP,\bbF)$-martingale. Therefore, taking the expectation we get
\begin{equation*}
\bbE\left[\dfrac{\eta_t}{1+\epsilon \eta_t}\right] = \dfrac{1}{1+\epsilon} - \bbE\left[\int_0^t \dfrac{\epsilon \eta_s^2}{(1+\epsilon \eta_s)^3} \norm{\gamma^{-1}(s) h(s, X_s)}^2 \, \dd s\right].
\end{equation*}
Notice that $\frac{\epsilon \eta_s^2}{(1+\epsilon \eta_s)^3} \norm{\gamma^{-1}(s) h(s, X_s)}^2 \longrightarrow 0$, as $\epsilon \to 0$, $\dd \bbP \otimes \dd t$-a.s. Moreover,
\begin{equation*}
\dfrac{\epsilon \eta_s^2}{(1+\epsilon \eta_s)^3} \norm{\gamma^{-1}(s) h(s, X_s)}^2 \leq \eta_s\norm{\gamma^{-1}(s) h(s, X_s)}^2, \quad s \in [0,T], 
\end{equation*}
that, using conditions \eqref{eq:hlin} and \eqref{eq:gammaunifpd}, satisfies (see also \eqref{eq:gammainvhintegr})
\begin{align*}
&\phantom{\mathop{\leq}} \bbE\left[\int_0^T \eta_s \norm{\gamma^{-1}(s) h(s, X_s)}^2 \, \dd s\right]
\leq n C_h C_\gamma \bbE\left[\int_0^T \eta_s (1+\norm{X_s}^2) \, \dd s\right]
\\
&= n C_h C_\gamma\left\{\int_0^T \bbE[\eta_s] \, \dd s + \int_0^T \bbE[\eta_s \norm{X_{s^-}}^2] \, \dd s\right\}
\leq n C_h C_\gamma T[1 + M \de^{M(T+mK)}],
\end{align*}
where we used the fact that $\bbE[\eta_t] \leq 1$, for all $t \in [0,T]$, and \eqref{eq:etanormXest}.
Similarly, $\frac{\eta_t}{1+\epsilon \eta_t} \to \eta_t$, as $\epsilon \to 0$, $\dd \bbP \otimes \dd t$-a.s., and $\bbE[\int_0^T \eta_s \, \dd s] \leq T$. Therefore, by the dominated convergence theorem
\begin{equation*}
\bbE[\eta_t] = \lim_{\epsilon \to 0} \bbE\left[\dfrac{\eta_t}{1+\epsilon \eta_t}\right] = \lim_{\epsilon \to 0} \left\{ \dfrac{1}{1+\epsilon} - \bbE\left[\int_0^t \dfrac{\epsilon \eta_s^2}{(1+\epsilon \eta_s)^3} \norm{\gamma^{-1}(s) h(s, X_s)}^2 \, \dd s\right]\right\} = 1,
\end{equation*}
and this concludes the proof.
\end{proof}

%%%%%%%%%%%%%%%%%%

\begin{proof}[Proof of Lemma~\ref{lem:tousegronwall}]
Fix $\epsilon > 0$.
To start, let us notice that continuity of process $\nu$ implies that also $\zeta$ is continuous and, therefore, $\zeta_t = \zeta_{t^-}$ and $T_\epsilon \zeta_t = T_\epsilon \zeta_{t^-}$, $\dd t \otimes \dd \bbP$-almost everywhere.

Since $\psi_{2\epsilon}$ is bounded by $(4\pi\epsilon)^{-\frac m2}$, we get that for all $t \in [0,T]$,
\begin{align}\label{eq:normZtest}
\norm{T_\epsilon \zeta_t}_H^2 
&= \int_{\R^m} \biggl[\int_{\R^m} \psi_\epsilon(x-y) \, \zeta_t(\dd x)\biggr]^2 \, \dd y \\
&= \int_{\R^m} \int_{\R^m} \int_{\R^m} \psi_{\epsilon}(x-y) \psi_{\epsilon}(z-y) \, \zeta_t(\dd x) \, \zeta_t(\dd z) \, \dd y \\
&= \int_{\R^m} \int_{\R^m} \psi_{2\epsilon}(x-z) \, \zeta_t(\dd x) \, \zeta_t(\dd z)
\leq (4\pi\epsilon)^{-\frac m2} \zeta_t(\one)^2.
\end{align}
Taking into account \eqref{eq:Zakai} and the fact that $\nu$ is continuous, the process $\zeta(\one)$ satisfies
\begin{equation*}
\zeta_t(\one) = 1 + \int_0^t \gamma^{-1}(s) \zeta_s(h_s) \, \dd \overline B_s, \quad t \in [0,T],
\end{equation*}
where $h_t(\cdot) \coloneqq h(t,\cdot)$, $t \in [0,T]$. Thanks to Assumption \ref{hyp:bddcoeff}, $\zeta_t(h_t) < +\infty$, $\bbP$-a.s., for all $t \in [0,T]$. Therefore, since $\zeta_t$ is $\bbP$-a.s. a finite (non-negative) measure, for any $t \in [0,T]$, we get that $\zeta(\one)$ is a non-negative $(\bbP,\bbY)$-local martingale, and hence a $(\bbP,\bbY)$-supermartingale.

The next step is to prove that $\zeta(\one)$ is a square-integrable\footnote{If $M = (M_t)_{t \in [0,T]}$ is any martingale, we say that $M$ is a $p$-integrable martingale, with $p \geq 1$, if $\bbE[\sup_{t \in [0,T]} \abs{M_t}^p]^{1/p} < +\infty$.} $(\bbP,\bbY)$-martingale. We follow, first, a reasoning analogous to that of \citep[Lemma~4.3.1]{bensoussan:stochcontrol} (see also \citep[Lemma~3.29]{bain:fundofstochfilt}) to provide an explicit representation of $\zeta(\one)$.
By It\^o's formula we obtain, for any $\delta > 0$ and all $t \in [0,T]$,
\begin{align}\label{eq:itozeta1}
&\mathop{\phantom{=}}\log\left(\sqrt{\delta + \zeta_t(\one)^2}\right) 
= \log\left(\sqrt{1+\delta}\right) + \int_0^t \dfrac{\zeta_s(\one)}{\delta + \zeta_s(\one)^2} \gamma^{-1}(s) \zeta_s(h_s) \, \dd \overline B_s \notag \\
&+ \dfrac 12 \int_0^t \dfrac{\delta - \zeta_s(\one)^2}{[\delta + \zeta_s(\one)^2]^2} \sum_{i=1}^n \left(\sum_{j=1}^n \gamma^{-1}_{ij}(s) \zeta_s(h_s^j)\right)^2 \, \dd s.
\end{align}
Since, thanks to Assumption \ref{hyp:bddcoeff} and \eqref{eq:invgammanorm}, 
\begin{equation}\label{eq:gammazetahest}
\sum_{i=1}^n \biggl(\sum_{j=1}^n \gamma^{-1}_{ij}(t) \zeta_t(h_t^j)\biggr)^2 \leq (n C_\gamma K_h \zeta_t(\one))^2, \quad \bbP\text{-a.s.}, \quad \forall t \in [0,T],
\end{equation}
and
$\frac{\delta - \zeta_t(\one)^2}{[\delta + \zeta_t(\one)^2]^2} \leq \frac{1}{\delta + \zeta_t(\one)^2}$, $\bbP$-a.s., for all $t \in [0,T]$,
we have
\begin{equation*}
\dfrac{\zeta_s(\one)^2}{[\delta + \zeta_s(\one)^2]^2} \sum_{i=1}^n \left(\sum_{j=1}^n \gamma^{-1}_{ij}(t) \zeta_t(h_t^j)\right)^2 \leq \left(n C_\gamma K_h\dfrac{\zeta_s(\one)^2}{\delta + \zeta_s(\one)^2} \right)^2 \leq (nC_\gamma K_h)^2, \quad \forall t \in [0,T],
\end{equation*}
and
\begin{equation*}
\dfrac{\delta - \zeta_s(\one)^2}{[\delta + \zeta_s(\one)^2]^2} \sum_{i=1}^n \left(\sum_{j=1}^n \gamma^{-1}_{ij}(s) \zeta_s(h_s^j)\right)^2 \leq \dfrac{\zeta_s(\one)^2}{\delta + \zeta_s(\one)^2} (n C_\gamma K_h)^2 \leq (nC_\gamma K_h)^2, \quad \forall t \in [0,T].
\end{equation*}
Both the r.h.s.\ of the last two inequalities are integrable on $[0,T]$, therefore we can pass to the limit, as $\delta \to 0$, in \eqref{eq:itozeta1}, getting that, for all $t \in [0,T]$,
\begin{equation}\label{eq:logzeta1}
\log(\zeta_t(\one)) 
= 1 + \int_0^t \gamma^{-1}(s) \zeta^1_s(h_s) \, \dd \overline B_s \\
- \dfrac 12 \int_0^t \sum_{i=1}^n \biggl(\sum_{j=1}^n \gamma^{-1}_{ij}(s) \zeta^1_s(h_s^j)\biggr)^2 \, \dd s,
\end{equation}
where $\zeta^1_t(\dd x) \coloneqq \frac{\zeta_t(\dd x)}{\zeta_t(\one)}$, $t \in [0,T]$, is the normalized process associated to $\zeta$. From \eqref{eq:logzeta1} we get the explicit representation for $\zeta(\one)$, i.e., for all $t \in [0,T]$,
\begin{equation}\label{eq:zeta1}
\zeta_t(\one) = \exp\biggl\{\int_0^t \gamma^{-1}(s) \zeta^1_s(h_s) \, \dd \overline B_s \\
- \dfrac 12 \int_0^t \sum_{i=1}^n \biggl(\sum_{j=1}^n \gamma^{-1}_{ij}(s) \zeta^1_s(h_s^j)\biggr)^2 \, \dd s\biggr\}.
\end{equation}
This entails that $\zeta(\one)$ coincides with the Doléans-Dade exponential of the continuous $(\bbP,\bbY)$-local martingale
$\int_0^t \gamma^{-1}(s) \zeta^1_s(h_s) \, \dd \overline B_s$, $t \in [0,T]$.
Using once more \eqref{eq:gammazetahest} we have that, for any $k > 1$,
\begin{equation*}
\bbE\biggl[\exp\biggl\{\dfrac k2 \int_0^T \sum_{i=1}^n \biggl(\sum_{j=1}^n \gamma^{-1}_{ij}(t) \zeta^1_t(h_t^j)\biggr)^2 \dd t\biggr\}\biggr] \leq \exp\left\{\dfrac{kT(n C_\gamma K_h)^2}{2}\right\}.
\end{equation*}
Applying \citep[Theorem~15.4.6]{cohen:stochcalculus}, we get that, for any $p>1$, $\zeta(\one)$ is a $p$-integrable (in particular, square-integrable) $(\bbP,\bbY)$-martingale. Therefore, from \eqref{eq:normZtest} we get
\begin{equation*}
\bbE[\sup_{t \in [0,T]} \norm{T_\epsilon \zeta_t}_H^2] \leq (4\pi\epsilon)^{-\frac m2} \bbE[\sup_{t \in [0,T]} \zeta_t(\one)^2] < +\infty,
\end{equation*}
whence, recalling the remark at the beginning of the proof, the claim.
\end{proof}

\bibliographystyle{plainnat}
\bibliography{Bibliography}
\end{document}